\newtheorem{theorem}{Theorem}[section]
\newtheorem{lemma}[theorem]{Lemma}
\newtheorem{corollary}[theorem]{Corollary}
\newtheorem{proposition}[theorem]{Proposition}
\newtheorem{remark}[theorem]{Remark}
\numberwithin{equation}{section}
\newcommand{\eps}{\varepsilon}
\newcommand{\norm}[1]{\left|\left|#1\right|\right|}
\newcommand{\lr}[1]{\left(#1\right)}
\newcommand{\abs}[1]{\left|#1\right|}
\newcommand{\set}[1]{\left\{#1\right\}}
\newcommand{\E}[1]{\mathbb E\left[#1\right]}
\newcommand{\inprod}[2]{\left \langle #1,#2\right\rangle }
\def\Var{\mathrm{Var}}  
\newcommand{\x}{\times}
\newcommand{\R}{\mathbb R}
\newcommand{\C}{\mathbb C}
\newcommand{\gives}{\rightarrow}
\DeclareMathOperator{\TL}{TL}
\DeclareMathOperator{\vol}{vol}
\begin{document}


\title{Non-asymptotic Results for Singular Values of Gaussian Matrix Products}

\author{Boris Hanin\footnote{BH is supported by NSF grants DMS--1855684 and CCF--1934904},  Grigoris Paouris\footnote{GP is supported by NSF grants DMS--1812240 and CCF--1900929}\\
$\,\textsuperscript{*,\textdagger}$Dept. of Mathematics, Texas A\&M University, College Station, TX, 77843\\
$\,\textsuperscript{*}$Google, Mountain View, CA, 94043}

\maketitle

\begin{abstract}
This article provides a non-asymptotic analysis of the singular values (and Lyapunov exponents) of Gaussian matrix products in the regime where $N,$ the number of terms in the product, is large and $n,$ the size of the matrices, may be large or small and may depend on $N$. We obtain concentration estimates for sums of Lyapunov exponents, a quantitative rate for convergence of the empirical measure of the squared singular values to the uniform distribution on $[0,1]$, and results on the joint normality of Lyapunov exponents when $N$ is sufficiently large as a function of $n.$ Our technique consists of non-asymptotic versions of the ergodic theory approach at $N=\infty$ due originally to Furstenberg and Kesten \cite{furstenberg1960products} in the 1960's, which were then further developed by Newman \cite{newman1986distribution} and Isopi-Newman \cite{isopi1992triangle} as well as by a number of other authors in the 1980's. Our key technical idea is that small ball probabilities for volumes of random projections gives a way to quantify convergence in the multiplicative ergodic theorem for random matrices. 
\end{abstract}

\section{Introduction}
This article is about the spectral theory of random matrix products
\begin{equation}\label{E:X-def}
    X_{N,n} := A_N\cdots A_1,
\end{equation}
where $A_i$ are independent $n\x n$ matrices with independent real Gaussian entries $\lr{A_i}_{\alpha\beta}\sim \mathcal N(0,1/n)$ of mean zero and variance $1/n.$ We are primarily interested in the situation when $N$ is large and finite, while $n$ may depend on $N$ and may be either small or large. Our results concern the singular values of $X_{N,n}:$
\begin{equation}\label{E:sing-def}
s_1(X_{N,n})\geq \cdots \geq s_n(X_{N,n}),
\end{equation}
and can be summarized informally as follows:
\begin{enumerate}
\item We prove that as $N,n$ tend to infinity \textit{at any relative rate} the global distribution of the normalized squared singular values $\set{s_i(X_{N,n})^{2/N},\, i = 1,\ldots, n}$ converges to the uniform distribution on $[0,1]$ (see \S \ref{S:global-intro} and Theorem \ref{T:global}). Unlike previous results, we obtain quantitative concentration estimates valid for all $N,n$ larger than a fixed constant. See also \S \ref{S:why-triangle} for a heuristic explanation of why the uniform distribution appears in this context.
\item We prove that as long as $N$ is sufficiently large as a function of $n,$ the Lyapunov exponents
\begin{equation}\label{E:lyapunov-def}
    \lambda_i = \lambda_i(X_{N,n}) :=\frac{1}{N}\log s_i(X_{N,n})
\end{equation}
of $X_{N,n}$ are approximately independent and Gaussian (see Theorem \ref{T:normal} in \S \ref{S:normal}). Unlike previous results, our estimates simultaneously treat all the Lyapunov exponents and provide quantitative concentration estimates when $N$ is large but finite even when $n$ grows with $N.$
\item The statements listed above derived from our main technical result, Theorem \ref{T:sums}, which gives quantitative deviation estimates on sums of Lyapunov exponents of $X_{N,n}$:
\[\mathbb P\lr{\abs{\frac{1}{n}\sum_{i=m}^k(\lambda_i -\mu_{n,i})}\geq s}\leq c_1 e^{-c_2nNs\min\set{1, ng_{n,k}(s)}},\quad s\geq \frac{k}{nN}\log\lr{\frac{en}{k}},\]
where $\mu_{n,i}$ is defined in \eqref{E:mu-def} and $g_{n,k}(s)$ is a function defined in \eqref{E:g-def}. It is known that (e.g. equations (1) and (7) in \cite{newman1986distribution}) $\mu_{n,i}$ is the almost sure limit of $\lambda_i$ when $N\gives\infty$.
\end{enumerate}
In this article, we exclusively treat the case of $A_i$ having iid real Gaussian entries. This simplifies a number of arguments, but we conjecture that similar results hold if we assume only that the distribution of the entries of $A_i$ have finite fourth moments and bounded density. We leave this for future work. 

\subsection{Main Technical Result} 
Let us set some notation. Denote as in \eqref{E:lyapunov-def} by $\lambda_i=\lambda_i(X_{N,n})$ the Lyapunov exponents of $X_{N,n}$. Further, define for any $s>0$ \begin{equation}
\label{E:g-def}
g_{n,k}(s)  = \left\{ \begin{array}{cc} 
\min\left\{ 1, \frac{ n s}{ k}\right\} \  ,& \hspace{5mm} k\leq \frac{n}{2} \\
\min\left\{\delta_{n,k},  \frac{  s}{ \log{1/\delta_{n,k}}} \right\} \  ,& \hspace{5mm} \frac{n}{2}<k\leq n \\
\end{array} \right.,
\end{equation}
where for $ k \geq n/2$ we've set 
\[ \delta_{n,k}:= \frac{ n-k+1}{n}\in \left[ \frac{1}{n} , \frac{n-1}{n}\right].\]
Finally, write
\begin{equation}\label{E:mu-def}
    \mu_{n,k} := \E{\frac{1}{2}\log\lr{\frac{1}{n}\chi_{n-k+1}^2}}=\frac{1}{2}\lr{\log\lr{\frac{2}{n}}+\psi\lr{\frac{n-k+1}{2}}},
\end{equation}
where $\psi(z) = \frac{d}{dz}\log\Gamma(z)$ is the digamma function and $\chi_m^2$ is a chi-squared random variable with $m$ degrees of freedom. Our main technical result is the following

\begin{theorem}[Deviation Estimates for Sums of Lyapunov Exponents]\label{T:sums}
\label{T:sums}
\noindent There exist universal constants $c_1,c_2,c_3>0$ with the following property. Fix $ 1\leq m \leq k \leq n $ as well as $N\geq 1$. Then,
\begin{equation}
\label{E:sums}
\mathbb P\lr{\abs{\frac{1}{n}\sum_{i=m}^k (\lambda_i - \mu_{n,i})}\geq s} \leq c_2 \exp\lr{-c_3nNs\min\set{1,ng_{n,k}(s)}} {\color{purple} } ,
\end{equation}
provided $s\geq c_1\frac{k}{nN}\log(en/k).$
 \end{theorem}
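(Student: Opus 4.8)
\emph{Reductions.} Since $\sum_{i=m}^k(\lambda_i-\mu_{n,i})=\sum_{i=1}^k(\lambda_i-\mu_{n,i})-\sum_{i=1}^{m-1}(\lambda_i-\mu_{n,i})$, the triangle inequality and a union bound reduce the theorem to the case $m=1$ at the cost of halving $s$; here one uses that $g_{n,k}$ is non-increasing and $\tfrac k{nN}\log(en/k)$ non-decreasing in $k\in\{1,\dots,n\}$, so the tail bound for the index $m-1\le k$ is no weaker than for $k$. Fix $1\le k\le n$. Everything rests on the identity
\[ e^{N\sum_{i=1}^k\lambda_i}=s_1(X_{N,n})\cdots s_k(X_{N,n})=\max_{\dim V=k}\vol_k(X_{N,n}V), \]
where for a $k$-dimensional subspace $V$ and linear map $T$ we write $\vol_k(TV)=\sqrt{\det\big((T|_V)^{*}(T|_V)\big)}$ for the $k$-volume distortion of $T$ along $V$; one has the cocycle relation $\vol_k(X_{N,n}V)=\prod_{j=1}^N\vol_k(A_jW_{j-1})$ with $W_0=V$, $W_j=A_jW_{j-1}$, and — by invariance of the Gaussian law under left and right multiplication by orthogonal matrices, together with the Bartlett decomposition — conditionally on $W_{j-1}$ the factor $\vol_k(A_jW_{j-1})^2$ always has the distribution of $\prod_{i=0}^{k-1}\tfrac1n\chi^2_{n-i}$ with independent chi-squares. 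Consequently, for \emph{any} fixed $k$-plane $V$,
\[ \log\vol_k(X_{N,n}V)^2\ \overset{d}{=}\ S:=\sum_{j=1}^N\sum_{i=0}^{k-1}\log\!\Big(\tfrac1n\chi^2_{n-i}\Big), \]
a sum of $Nk$ independent random variables (independent copies of the chi-squares in each summand), with $\E{S}=2N\sum_{i=1}^k\mu_{n,i}$ by \eqref{E:mu-def}.

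\emph{The scalar estimate.} The technical heart is a Bernstein-type bound: $\mathbb P(|S-\E{S}|\ge nNt)\le c\exp(-c'\,nNt\min\{1,ng_{n,k}(t)\})$ for all $t\ge0$. This follows by optimizing exponential tilts: with $K_i(\theta):=\log\E{(\tfrac1n\chi^2_{n-i})^\theta}=\theta\log\tfrac2n+\log\Gamma(\tfrac{n-i}2+\theta)-\log\Gamma(\tfrac{n-i}2)$ one has $K_i''(\theta)=\psi'(\tfrac{n-i}2+\theta)\lesssim\tfrac1{n-i}$ uniformly over the admissible range of $\theta$ (for the left tail one restricts $0\le\theta\le\tfrac{n-k+1}{4}$ to control $\psi'$ near its pole), so $\sum_{i=0}^{k-1}(K_i(\pm\theta)\mp\theta K_i'(0))\lesssim\theta^2\sum_{i=0}^{k-1}\tfrac1{n-i}$. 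Since $\sum_{i=0}^{k-1}\tfrac1{n-i}\asymp\tfrac kn$ for $k\le n/2$ and $\asymp\log(1/\delta_{n,k})$ for $k>n/2$, the constrained minimization of $-\theta\,nNt+c''\theta^2N\sum_i\tfrac1{n-i}$ over $\theta$ reproduces precisely the two branches of \eqref{E:g-def}: the constraint $\theta\le\tfrac{n-k+1}{4}=\tfrac n4\delta_{n,k}$ creates the $\delta_{n,k}$ term, and saturation of the tilt at a universal constant creates the outer $\min\{1,\cdot\}$.

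\emph{The two tails.} For the lower tail, $e^{N\sum_{i=1}^k\lambda_i}\ge\vol_k(X_{N,n}V_0)$ for $V_0$ the first coordinate $k$-plane, so $\mathbb P(\tfrac1n\sum_{i=1}^k(\lambda_i-\mu_{n,i})\le -s)\le\mathbb P(S\le\E{S}-2nNs)$, which the scalar estimate handles with no combinatorial loss. For the upper tail, let $V^{*}$ be the span of the top $k$ right singular vectors of $X_{N,n}$. Cauchy–Binet in the singular basis yields the deterministic bound $\vol_k(X_{N,n}V_0)\ge\vol_k(P_{V^{*}}V_0)\,s_1(X_{N,n})\cdots s_k(X_{N,n})$, hence $\sum_{i=1}^k\lambda_i\le\tfrac1{2N}\log\vol_k(X_{N,n}V_0)^2-\tfrac1N\log\alpha$ with $\alpha:=\vol_k(P_{V^{*}}V_0)\in(0,1]$. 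By the right-rotation invariance of $X_{N,n}$, $V^{*}$ is Haar on the Grassmannian, so $\alpha$ is the $k$-volume of the projection of a uniform random $k$-plane onto a fixed one; equivalently $\alpha^2\overset{d}{=}\prod_{i=1}^k\beta_i$ with $\beta_i\sim\mathrm{Beta}(\tfrac{k-i+1}2,\tfrac{n-k}2)$ independent. A union bound gives
\[ \mathbb P\Big(\tfrac1n\sum_{i=1}^k(\lambda_i-\mu_{n,i})\ge s\Big)\le\mathbb P\big(S\ge\E{S}+nNs\big)+\mathbb P\big(\log(1/\alpha)\ge\tfrac12 nNs\big), \]
the first term being the scalar estimate. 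For the second, the $(-\tfrac14)$-th moment computes to $\E{\prod_i\beta_i^{-1/4}}=\prod_i\tfrac{B(\tfrac{k-i+1}2-\tfrac14,\,\tfrac{n-k}2)}{B(\tfrac{k-i+1}2,\,\tfrac{n-k}2)}\lesssim\binom{n}{k}^{1/4}\le e^{(k/4)\log(en/k)}$, so by Markov $\mathbb P(\log(1/\alpha)\ge u)\lesssim\binom{n}{k}^{1/4}e^{-u/2}$; with $u=\tfrac12 nNs$ and the hypothesis $s\ge c_1\tfrac k{nN}\log(en/k)$ (taking $c_1$ large) this is at most $c\,e^{-c'nNs}$, which is dominated by the right side of \eqref{E:sums}. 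Combining the two tails and undoing the reduction proves Theorem \ref{T:sums}.

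\emph{Main difficulty.} I expect the two substantive steps to be (i) pinning the scalar estimate to exactly the function $g_{n,k}$ — this is what dictates the form of \eqref{E:g-def} and of the admissibility threshold, and requires the case split $k\lessgtr n/2$ and care with $\psi'$ near its pole for the left tail; and (ii) the small-ball bound for the random-projection volume $\alpha$ — the idea flagged in the abstract — where the crucial feature is not that $\mathbb P(\alpha\le\epsilon)$ is a power of $\epsilon$ but that its prefactor is only $\binom{n}{k}^{O(1)}$, precisely so that it is absorbed by the threshold $s\ge c_1\tfrac k{nN}\log(en/k)$.
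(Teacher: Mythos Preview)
Your proposal is correct and follows essentially the same architecture as the paper's proof: reduce to $m=1$, write $\sum_{i\le k}\lambda_i$ as the sup over $k$-planes of $\frac1N\log\vol_k(X_{N,n}V)$, control the pointwise quantity via its iid log-chi-square representation, and bridge to the sup via a small-ball bound for the volume of a random $k$-dimensional projection.

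The technical tools differ in two places, and both alternatives are legitimate. For the scalar concentration the paper goes through \L ata\l a's moment characterization (Theorem~\ref{Latala}) and a somewhat involved moment computation (Proposition~\ref{P:moments}), whereas you tilt the MGF directly using $K_i''(\theta)=\psi'\!\big(\tfrac{n-i}{2}+\theta\big)\lesssim\tfrac{1}{n-i}$; this is shorter and equally sharp. For the small-ball input the paper reduces to negative moments of $\det(GG^*)$ (Proposition~\ref{Gaussian-matrix-det-est}, taken from \cite{paouris2013small}), while you compute negative Beta moments directly; both give $\mathbb P(\alpha\le\varepsilon)\lesssim\binom{n}{k}^{O(1)}\varepsilon^{ck}$, which is exactly what the threshold $s\ge c_1\tfrac{k}{nN}\log(en/k)$ is designed to absorb. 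Your observation that the \emph{lower} tail of $\sum_i\lambda_i$ needs no small-ball step at all (since the sup dominates any fixed $V_0$) is a clean refinement the paper does not isolate. One small imprecision: the outer $\min\{1,\cdot\}$ in \eqref{E:sums} does not come from ``saturation of the tilt at a universal constant'' in the scalar estimate---the scalar estimate alone gives the stronger rate $n^2Nsg_{n,k}(s)$---but rather from the small-ball term, which only yields $e^{-cnNs}$. This does not affect the argument, since your union bound for the upper tail already contains that term.
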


Theorem \ref{T:sums} holds for every $n,N\geq 1$ and reveals a great deal about the singular values and Lyapunov exponents of $X_{N,n}$. For instance, in the bulk (i.e. when $k$ is comparable to $n$), the restriction on $s$ in \eqref{E:sums} reduces simply to $s> C/ N$, giving information about $X_{N,n}$ as soon as $N$ is large, regardless of $n$. This turns out to be  enough to prove Theorem \ref{T:global}, given in \S \ref{S:global-intro} below, which states that the squared singular values of $X_{N,n}$ approximate the uniform distribution on $[0,1]$ when $N,n$ tend to infinity at any relative rate. 

Theorem \ref{T:sums} also gives precise information about the top Lyapunov exponents of $X_{N,n}$. Indeed, taking $k$ to be fixed in \eqref{E:sums} gives non-trivial information on $\lambda_1,\ldots,\lambda_k$ as soon as $N\gg \log(n)$. Further, note that standard estimates for the digamma function $\psi$ yield
\begin{equation}\label{E:mu-est}
\mu_{n,k}= \log\lr{1-\frac{k-1}{n}} - \frac{1}{n-k+1} + O\lr{\frac{1}{(n-k+1)^2}}.
\end{equation}
This shows that the difference between the means $\mu_{n,1}$ and $\mu_{n,2}$ of $\lambda_1$ and $\lambda_2$ is on the order of $1/n$. As soon as $N\gg n\log(n)$, we may apply \eqref{E:sums} with $s\ll \lambda_1-\lambda_2$ to conclude that
\[ \frac{s_1(X_{N,n})}{s_2(X_{N,n})}=e^{N(\lambda_1-\lambda_2)}\geq e^{cN/n}\quad \text{with high probability}.\]
Hence, we find that $X_{N,n}$ begins to have a large spectral gap in the ``near ergodic'' regime $N\gg n\log(n)$. In fact, in Theorem \ref{T:normal}, we prove that in this regime $\lambda_1,\ldots,\lambda_k$ are also approximately independent Gaussians. We refer the reader to \S \ref{S:normal} for the details. 

A notable aspect of Theorem \ref{T:sums} is that it applies to any finite $n,N\geq 1$, allowing us to ``interpolate'' between the ergodic $N\gg n$ and free $n\gg N$ regimes. To explain this point, note that matrix products of the form \eqref{E:X-def} have been studied primarily in two setting. The first, which we refer to as the free probability regime occurs when $N$ is fixed and $n\gives \infty.$ This is a kind of maximum entropy regime in which the global distribution of singular values can be characterized in terms of maximizing the non-commutative entropy (cf eg \cite{arous1997large,banica2011free}). The second, which we call the ergodic regime, occurs when $n$ is fixed and $N\gives \infty$. This is a kind of minimal entropy regime in which the Lyapunov exponents (and singular values of $X_{N,n}$) tend to almost sure limits.

In both the ergodic and the free regimes, it is often difficult to obtain finite size corrections. Theorem \ref{T:sums} supplies such information. Moreover, since the ergodic and free regimes are usually treated by rather different means, it is unclear which techniques can give information that can interpolate between them. Our approach extends the ergodic techniques pioneered by Furstenburg-Kesten \cite{furstenberg1960products}, further developed in connection to random Schr\"odinger operators by Carmona \cite{carmona1982exponential} and Le Page \cite{le1982theoremes} (cf also \cite{bougerol1985concentration}), and applied in a very similar context as ours by Newman \cite{newman1986distribution} and Isopi-Newman \cite{isopi1992triangle}. It is therefore not surprising that in all of our results, we need $N$ to be in some sense large. 

Although we do not take this approach in the present article, it is also natural to study spectra of random matrix products by adapting techniques originally developed to treat the case when $N=1$. Indeed, in this setting, there has been considerable effort to obtain non-asymptotic analogs of classical random matrix theory results when $n=\infty$ \cite{rudelson2014lecture,vershynin12,rudelson2017delocalization}, culminating in the resolution of a number of long-standing open problems \cite{rudelson2008littlewood,rudelson2009smallest,adamczak2010quantitative,tikhomirov2020singularity}. More recently, several groups of authors \cite{henriksen2020concentration,huang2020matrix,kathuria2020concentration} have started to extend techniques for obtaining concentration for random matrices tailored (see \cite{tropp2015introduction}) to the small $N$ regime for understanding the kinds of matrix products considered in this article. From this point of view, our article takes a complementary approach, finding extensions of techniques originally coming from the ergodic theory used to analyze the case when  $N=\infty$.

\subsection{Convergence of Squared Singular Values to the Uniform Distribution}\label{S:global-intro}
Prior work \cite{isopi1992triangle,kargin2008lyapunov, tucci2010limits,gorin2018gaussian,liu2018lyapunov,ahn2019fluctuations} shows that in a variety of settings where $n,N\gives \infty$, the global distribution of singular values of $X_{N,n}^{1/N}$ converges to the so-called triangle law after proper normalization.  Informally, this means
\begin{equation}\label{E:triangle-informal}
    \lim_{N,n\gives \infty} \frac{1}{n}\#\set{j\leq n~|~ s_i^{1/N}(X_{N,n})\leq t} ~=~ \int_{-\infty}^t 2s{\bf 1}_{\set{s\in [0,1]}} ds~=:~TL(t).
\end{equation}
The graph of the density $2s{\bf 1}_{\set{s\in [0,1]}}$ of $\TL$ has the shape of a triangle, giving the distribution its name. With the exception of the articles \cite{liu2018lyapunov,gorin2018gaussian}, which obtain much more precise information for products of \textit{complex} Gaussian matrices and the article \cite{ahn2019fluctuations} concerning $\beta-$Jacobi products as well as the real Gaussian case, the majority of prior results about \eqref{E:triangle-informal} (e.g. \cite{isopi1992triangle,kargin2008lyapunov,tucci2010limits}) do not allow $n,N$ to tend to infinity simultaneously. Moreover, all prior results we are aware of do not give quantitative rates of convergence. Theorem \ref{T:global} provides both for the real Gaussian case we consider here. To state it, we note that if a random variable $T$ is distributed according to the triangle law, then $T^2$ is uniformly distributed on $[0,1]$, i.e. has the following cumulative distribution function:
\[\mathcal U(t):=\int_{-\infty}^t {\bf 1}_{[0,1]}(t)dt.\]

\begin{theorem}[Global Convergence to Triangle Law]\label{T:global}
There exist universal constants $c_1,c_2,c_3,c_4>0$ with the following property. For all $\eps\in (0,c_1)$, if $N>c_2/\eps^{2}$ and $n> c_3\log(1/\eps)/\eps$, then the probability 
\begin{align*}
&\mathbb P\lr{\sup_{t\in \R}\abs{{\frac{1}{n}\#\set{1\leq i\leq n~|~ s_i^{2/N}(X_{N,n})\leq t}} - \mathcal U(t)}\geq \eps }
\end{align*}
that the cumulative distribution for the squared singular values of $X_{N,n}$ deviates from the uniform distribution by more than $\eps$ is bounded above by 
\[ 4\exp\left[-c_4n N \varepsilon^{2}\min\set{1,ng_{n,k}(\varepsilon^{2})} \right].\]

\end{theorem}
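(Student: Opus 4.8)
The plan is to deduce Theorem \ref{T:global} from the main technical result, Theorem \ref{T:sums}, by a standard Kolmogorov--Smirnov argument. Observe first that the cumulative distribution function $F_N(t) := \frac{1}{n}\#\set{1\leq i\leq n \mid s_i^{2/N}(X_{N,n})\leq t}$ is a nonincreasing-jump step function, and since $s_i^{2/N} = e^{2\lambda_i}$, the condition $s_i^{2/N}\leq t$ reads $2\lambda_i \leq \log t$. The target $\mathcal U$ is supported on $[0,1]$ with density $1$ there, so it suffices to control $\sup_t \abs{F_N(t) - \mathcal U(t)}$. The key reduction is that both $F_N$ and $\mathcal U$ are monotone, and $\mathcal U$ is $1$-Lipschitz on $[0,1]$; therefore a uniform bound on $\abs{F_N(t)-\mathcal U(t)}$ follows from controlling $F_N$ at a net of $O(1/\eps)$ values of $t$, together with the jump structure of $F_N$. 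Concretely, the event $\set{\sup_t \abs{F_N(t)-\mathcal U(t)}\geq \eps}$ is contained in a union of events of the form $\set{F_N(t_j)\geq \mathcal U(t_j)+\eps/2}$ and $\set{F_N(t_j)\leq \mathcal U(t_j)-\eps/2}$ over a grid $t_j$ of cardinality $O(1/\eps)$.

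Next I would translate each such event into a statement about sums of Lyapunov exponents. Since $F_N(t)$ counts the number of $i$ with $\lambda_i \leq \frac12\log t$, the event $F_N(t_j)\geq \mathcal U(t_j)+\eps/2$ says that at least $k := \lceil n(\mathcal U(t_j)+\eps/2)\rceil$ of the exponents lie below $\frac12\log t_j$, which forces $\frac1n\sum_{i=1}^{k}\lambda_i$ to be small; comparing against $\frac1n\sum_{i=1}^k \mu_{n,i}$ and using the asymptotics \eqref{E:mu-est} for $\mu_{n,i}$ — note that $\frac1n\sum_{i=1}^k \mu_{n,i}$ is, up to $O(1/n)$ errors, a Riemann sum for $\int_0^{k/n}\log(1-x)\,dx$, which is precisely the quantile function encoding $\mathcal U$ after the change of variables $t = e^{2\cdot(\text{quantile})}$ — shows that a deviation of $F_N$ from $\mathcal U$ by $\eps$ at $t_j$ implies a deviation of $\frac1n\sum_{i=m}^k(\lambda_i-\mu_{n,i})$ by order $\eps$. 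Here one must be a little careful that $m$ and $k$ are chosen so that the interval $[m,k]$ has length comparable to $n\eps$, and that the restriction $s\geq c_1\frac{k}{nN}\log(en/k)$ in Theorem \ref{T:sums} is met: this is where the hypotheses $N>c_2/\eps^2$ and $n>c_3\log(1/\eps)/\eps$ enter, since in the bulk $k\asymp n$ the restriction becomes $s\gtrsim 1/N$, consistent with $s\asymp\eps^2$ once $N\gtrsim 1/\eps^2$; near the edge one uses $n$ large enough that $\frac{k}{nN}\log(en/k)$ stays below $\eps^2$.

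Then apply Theorem \ref{T:sums} with $s$ of order $\eps^2$ (the square arises because, near a point where the quantile function has a critical direction, or more plainly because the Lipschitz comparison between $F_N$ levels and $\mathcal U$ introduces a loss — matching the $\eps^{-2}$ in the hypothesis on $N$) to bound each individual event by $c_2\exp(-c_3 nN\eps^2\min\set{1,ng_{n,k}(\eps^2)})$. A union bound over the $O(1/\eps)$ grid points multiplies this by $1/\eps$, which is absorbed into the exponential by adjusting constants and using $nN\eps^2 \gg \log(1/\eps)$, itself guaranteed by the hypotheses; this yields the claimed bound $4\exp[-c_4 nN\eps^2\min\set{1,ng_{n,k}(\eps^2)}]$.

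The main obstacle I anticipate is the bookkeeping in the second step: correctly matching the combinatorial statement "at least $k$ singular values below a threshold" to a sum $\sum_{i=m}^k\lambda_i$ with the \emph{right} index range and threshold so that the Riemann-sum approximation of $\frac1n\sum\mu_{n,i}$ lines up with $\mathcal U^{-1}$, while simultaneously keeping the deviation parameter $s\asymp\eps^2$ above the admissibility floor $c_1\frac{k}{nN}\log(en/k)$ uniformly over the whole range of $k$ (both bulk and edge). This is where the precise form of the two hypotheses on $N$ and $n$ is forced, and handling the near-edge indices $k$ close to $n$ — where $g_{n,k}$ switches to its second branch and $\delta_{n,k}$ becomes small — requires the extra $\log(1/\eps)$ factor in the condition on $n$. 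The rest is a routine net argument and a union bound.
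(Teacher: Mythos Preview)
Your approach is essentially the same as the paper's: reduce to a grid via monotonicity, translate the counting events into deviations of windowed Lyapunov sums of length $\asymp n\eps$ from their means $\mu_{n,j}$, apply Theorem~\ref{T:sums} at scale $s\asymp\eps^2$, and union-bound. The paper makes the Riemann-sum step precise via a dedicated digamma lemma showing $\frac{m}{2n}\log(q/n)-\frac{1}{n}\sum_{j=n-q+1}^{n-q+m}\mu_{n,j}\geq \frac{(m-1)^2}{4nq}\asymp\eps^2$; note that your sketch says ``order $\eps$'' at one point and also momentarily writes $\sum_{i=1}^k\lambda_i$ rather than the windowed sum, but you correctly switch to the window $[m,k]$ of length $n\eps$ and to $s\asymp\eps^2$, which is what is actually needed.
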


In the next section we use the circular law \eqref{E:circular-law} for the (complex) \textit{eigenvalues} of $X_{N,n}^{1/N}$ to give an intuitive but heuristic explanation for why the uniform distribution (or equivalently the triangle law) should appear as the limiting distribution of singular values on $X_{N,n}$. Before doing so, we briefly discuss the dependence of Theorem \ref{T:global} on $N,n$, starting with the former. For fixed $N$, consider an iid random sequence $\set{X_{N,n}}_{n=1}^\infty$ with the product measure. Taking $\eps =2(c_2/N)^{1/2}=:CN^{-1/2}$, Theorem \ref{T:global} shows that
\[
\mathbb P \lr{\sup_{t\in \R}\abs{{\frac{1}{n}\#\set{1\leq i\leq n~|~ s_i^{2/N}(X_{N,n})\leq t}} - \mathcal U(t)}\geq \frac{C}{\sqrt{N}}} \leq 4e^{-cn},\quad c>0.
\]
Thus, by Borel-Cantelli, we find that
\begin{equation}\label{E:N-rate}
\sup_{t\in \R}\abs{\lim_{n\gives \infty}{\frac{1}{n}\#\set{1\leq i\leq n~|~ s_i^{2/N}(X_{N,n})\leq t}} - \mathcal U(t)}\leq \frac{C}{\sqrt{N}}\qquad \text{with probability }1,
\end{equation}
where by $\lim_{n\gives\infty} a_n$ we mean any limit point of the sequence $a_n$. This $1/\sqrt{N}$ can be seen as a Berry-Esseen-type estimate. To make this precise, consider
\[\rho_{N,\infty}:=\lim_{n\gives \infty}\frac{1}{n}\sum_{i=1}^n \delta_{s_{i}(X_{N,n})^{1/N}},\]
the large matrix limit for the empirical distribution of normalized singular values for $X_{N,n}.$ It is known \cite[Thm 6.1]{banica2011free} that 
\[\rho_{N,\infty} = \mathrm{qc}^{\boxtimes N},\qquad \mathrm{qc}(x):=\frac{1}{2\pi}\sqrt{x(2-x)}{\bf 1}_{\set{[0,2]}}(x),\]
where $\mathrm{qc}$ is the quarter circle law and $\boxtimes$ is the multiplicative free convolution. Kargin \cite{kargin2008lyapunov} and Tucci \cite{tucci2010limits} show that, consistent with Theorem \ref{T:global}, 
\[\lim_{N\gives \infty} \rho_{N,\infty} = \mathrm{TL}.\]
As far as we know, the optimal rate of convergence for such repeated multiplicative free convolution is unknown. However, from this point of view, \eqref{E:N-rate} shows that the rate of convergence is at least as fast as in the usual central limit theorem. 

To understand the dependence of Theorem \ref{T:global} on $n,$ we send $N$ to infinity in Theorem \ref{T:global} to obtain as before that there is $C>0$ so that
\begin{equation}\label{E:n-rate}
\sup_{t\in \R}\abs{\lim_{N\gives \infty}{\frac{1}{n}\#\set{1\leq i\leq n~|~ s_i^{2/N}(X_{N,n})\leq t}} - \mathcal U(t)}\leq \frac{C\log(n)}{n}\qquad \text{with probability }1,
\end{equation}
Apart from the $\log(n)$, this estimate is sharp. Indeed, the empirical distribution
\[\rho_{\infty,n}:=\lim_{N\gives \infty}\frac{1}{n}\sum_{i=1}^n \delta_{s_{i}(X_{N,n})^{1/N}}\]
of singular values in the large number of matrices limit exists almost surely and is deterministic by the Multiplicative Ergodic Theorem. Among other things, Theorem \ref{T:normal} below computes, in agreement with the early work of Newman \cite{newman1986distribution}, this limit in our Gaussian case. The subsequent work of Isopi-Newman \cite{isopi1992triangle} showed that, under minimal assumptions,
\[\lim_{n\gives \infty}\rho_{\infty,n}=\mathrm{TL}.\]
This of course agrees with Theorem \ref{T:global}, which via \eqref{E:n-rate} provides a natural rate of convergence. This rate is optimal, perhaps up to the $\log(n),$ because the spacing of the atoms in $\rho_{\infty,n}$ is approximately $1/n$. Hence, the distance between $\rho_{\infty,n}$ and triangle law $\mathrm{TL}$, which is a continuous distribution, is bounded below by a constant times $1/n$. 

\subsection{Why the Uniform Distribution in Theorem \ref{T:global}?}\label{S:why-triangle}
A number of articles \cite{newman1986distribution,isopi1992triangle,kargin2008lyapunov,tucci2010limits,liu2018lyapunov, gorin2018gaussian} show as in \eqref{E:triangle-informal} that in the limit where $n,N$ tend to infinity, the singular values  $s_i(X_{N,n})^{1/N}$ (or for similar matrix products) converge to the triangle law (and hence their squares converge to the uniform distribution on $[0,1]$). These articles use a variety of techniques ranging from free probability to ergodic theory and special functions. Why does the uniform distribution appear? The purpose of this section to give an intuitive explanation for this phenomenon. After writing an initial draft of this article,  we learned from G. Akemann that an explanation similar to the one below can be found on pages 3,4 in \cite{akemann2014universalb}. We also refer the reader to the work of Kieberg-K\"osters \cite{kieburg2016exact} about an exact relation between eigenvalues and singular values for products of complex Ginibre matrices.

Since $X_{N,n}$ is not normal with probability $1$, its spectral properties are captured not only its singular values but also by its eigenvalues
\begin{equation}\label{E:spec-def}
\abs{\zeta_1(X_{N,n})}\geq \cdots \geq \abs{\zeta_n(X_{N,n})},\qquad \zeta_i(X_{N,n})\in \C.
\end{equation}
Our argument for why the triangle law appears in Theorem \ref{T:global} relates the singular values and eigenvalues of $X_{N,n}$ and consists of two observations. First, consider the (complex) eigenvalues of $X_{N,n}^{1/N}$ as defined in \eqref{E:spec-def}. It is shown in \cite{gotze2010asymptotic, o2011products} that for each fixed $N$ the empirical distribution of the eigenvalues of $X_{N,n}^{1/N}$ converges weakly almost surely to the uniform measure on the unit disk in $\C.$ This result is often called the circular law. Informally, it reads
\begin{equation}\label{E:circular-law}
\lim_{n\gives \infty}\frac{1}{n} \sum_{i=1}^{n} \delta_{\zeta_i^{1/N}}(z)~~=~~ \frac{1}{\pi}{\bf 1}_{\set{\abs{z}\leq 1}},\qquad z\in \C
\end{equation}
Precise results on the rate of convergence can be found in \cite{gotze2018rate,jalowy2019rate} and local limit theorems are obtained in \cite{nemish2017local}. Since in polar coordinates $(r,\theta)$ the radial part of the uniform measure on the unit disk is $2rdr$, a corollary of the circular law is that
\begin{equation}\label{E:evals-TL}
    \text{For }N\text{ fixed, as }n\gives \infty,\text{ squared eigenvalue moduli }\abs{\zeta_i}^{2/N}\text{ of }X_{N,n}^{1/N}\text{ converge to }\mathcal U.
\end{equation}
Thus, the uniform distribution $\mathcal U$ appears naturally as the distribution of the squared moduli of eigenvalues of $X_{N,n}^{1/N}$ for every $N!$ On the other hand, it has been proved that for any fixed finite $n$ \cite{reddy2016lyapunov,reddy2019equality} that when $N$ is large
\[\forall i=1,\ldots, n \qquad \abs{\zeta_i}^{1/N}\approx s_i^{1/N}.\]
Thus, we extract another piece of intuition:
\begin{equation}\label{E:singval-eval}
    \text{For }n\text{ fixed, as }N\gives \infty,\text{ eigenvalue moduli and singular values of }X_{N,n}^{1/N}\text{ coincide}.
\end{equation}
Putting together \eqref{E:evals-TL} and \eqref{E:singval-eval}, we conclude heuristically that if both $n,N$ tend to infinity then the distribution of the singular values $s_{i}^{1/N}$ should converge to the triangle law. This is precisely the content of Theorem \ref{T:global}. While the heuristic for \eqref{E:singval-eval} was previously established only when $n$ is fixed, we believe it can also be proved in the regime where $n$ is allowed to grow with $N$ but leave this for future work.

\subsection{Distribution of Lyapunov Exponents in the Near Ergodic Regime}\label{S:normal}
In addition to studying the global distribution of singular values of $X_{N,n}$, we also obtain in Theorem \ref{T:normal} precise estimates for the joint distribution of the Lyapunov exponents 
\begin{equation}\label{E:lambda-def}
\lambda_i=\lambda_i(X_{N,n})=\frac{1}{N}\log s_i(X_{N,n})
\end{equation}
of $X_{N,n}$ in the regime when $N\gg n\log^2(n).$ To state it, we need some notation. Recall first that for each $1\leq  k \leq n$ we had set
\begin{equation}\label{E:mu-def-second}
    \mu_{n,k} = \E{\frac{1}{2}\log\lr{\frac{1}{n}\chi_{n-k+1}^2}}=\frac{1}{2}\lr{\log\lr{\frac{2}{n}}+\psi\lr{\frac{n-k+1}{2}}},
\end{equation}
where $\psi(z) = \frac{d}{dz}\log\Gamma(z)$ is the digamma function and $\chi_m^2$ is a chi-squared random variable with $m$ degrees of freedom. We also recall the estimate \eqref{E:mu-est}:
\begin{equation*}
\mu_{n,k}= \log\lr{1-\frac{k-1}{n}} - \frac{1}{n-k+1} + O\lr{\frac{1}{(n-k+1)^2}}.
\end{equation*}
The quantity $\mu_{n,k}$ already appears in \cite{newman1986distribution,isopi1992triangle} as the mean of $\lambda_k$ when $N\gives \infty.$ We futher define
\begin{equation}\label{E:sigma-est}
    \sigma_{n,k}^2:=\Var\left[\frac{1}{2}\log\lr{\frac{1}{n}\chi_{n-k+1}^2}\right]= \psi'\lr{\frac{n-k+1}{2}} = \frac{1}{2(n-k+1)}+O\lr{\frac{1}{(n-k+1)^2}},
\end{equation}
and set
\begin{equation}\label{E:mu-sigma-def}
    \mu_{n,\leq k}:= \lr{\mu_{n,1},\ldots, \mu_{n,k}},\qquad \sigma_{n,\leq k}^2:=\lr{\sigma_{n,1}^2,\ldots, \sigma_{n,k}^2}.
\end{equation}
Finally, we will consider for two $\R^k$-valued random variables $X,Y$ the following high-dimensional generalization of the usual Kolmogorov-Smirnov distance:
\begin{equation}\label{E:dist-def}
d( X, Y ) := \sup_{ C \in {\cal{C}}_{k} } \left| \mathbb P ( X\in C ) -\mathbb P ( Y\in C) \right|,
\end{equation}
where $ {\cal{C}}_{k}$ is the collection of all convex subsets of $\mathbb R^{k}$. 
\begin{theorem}[Asymptotic Normality of Lyapunov Exponents]\label{T:normal}
There exist constants $C_1,C_2>0$ with the following property. Suppose $X_{N,n}$ is as in \eqref{E:X-def}, fix $1\leq k \leq n$, and write
\[\Lambda_k = \lr{\lambda_1,\ldots, \lambda_k}\]
for the vector of the top $k$ Lyapunov exponents of $X_{N,n}.$ Then, $\lambda_1,\ldots, \lambda_k$ are approximately independent and Gaussian when $N$ is sufficiently large as a function of $k,n$:
\begin{equation}\label{E:dist-est}d\lr{\Lambda_k, \,\mathcal N\lr{\mu_{n,\leq k},\, \frac{1}{N}\mathrm{Diag}\lr{\sigma_{n,\leq k}^2}}} \leq C_2\lr{\frac{k^{7/2}n\log^2(n)\log^2(N/n)}{N}}^{1/2}.
\end{equation}
Here $\mathcal N(\mu, \Sigma)$ denotes a Gaussian with mean $\mu$ and co-variance $\Sigma$ and for any $v=\lr{v_1,\ldots,v_k}\in \R^k$ we have written $\mathrm{Diag}(v)$ for the diagonal matrix with $\mathrm{Diag}(v)_{ii}=v_i.$
\end{theorem}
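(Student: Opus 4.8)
The plan is to replace $\Lambda_k$ by a normalized sum of i.i.d.\ $\R^k$-valued vectors, apply a quantitative multivariate central limit theorem over the class ${\cal{C}}_k$ of convex sets, and control the replacement error using the small-ball estimates for volumes of random projections that underlie Theorem~\ref{T:sums}. \emph{Step 1: an exact i.i.d.\ surrogate.} Write $X_{j,n}=A_j\cdots A_1$ and, for $1\le m\le k$, let $V^{(0)}_m:=e_1\wedge\cdots\wedge e_m$ be the fixed unit decomposable $m$-vector of $W^{(0)}_m:=\mathrm{span}(e_1,\dots,e_m)$. Since $s_1(X_{N,n})\cdots s_m(X_{N,n})=\norm{\wedge^m X_{N,n}}_{op}\ge\norm{\wedge^m X_{N,n}V^{(0)}_m}$, the partial sums $L_m:=\sum_{i=1}^m\lambda_i$ satisfy $L_m\ge\widetilde L_m:=\tfrac1N\log\norm{\wedge^m X_{N,n}V^{(0)}_m}$ with $L_m-\widetilde L_m\ge0$, and since $\wedge^m X_{N,n}V^{(0)}_m=(X_{N,n}e_1)\wedge\cdots\wedge(X_{N,n}e_m)$ the norm telescopes:
\[
\widetilde L_m=\frac1N\sum_{j=1}^N\log\frac{\norm{\wedge^m X_{j,n}V^{(0)}_m}}{\norm{\wedge^m X_{j-1,n}V^{(0)}_m}}=\frac1N\sum_{j=1}^N\log\norm{\wedge^m A_j\,\widehat V^{(m)}_{j-1}},
\]
where $\widehat V^{(m)}_{j-1}$ is the unit $m$-vector of the nested flag $\mathrm{span}(X_{j-1,n}e_1)\subset\mathrm{span}(X_{j-1,n}e_1,X_{j-1,n}e_2)\subset\cdots$, measurable with respect to $\sigma(A_1,\dots,A_{j-1})$. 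Because $A_j$ is independent of the past and its law is rotation invariant, conditionally on the past one may rotate this flag to the coordinate flag, so $\norm{\wedge^m A_j\widehat V^{(m)}_{j-1}}^2$ has the law of the squared $m$-volume of the first $m$ columns of $A_j$, i.e.\ $\prod_{i=1}^m\tfrac1n\chi_{n-i+1}^2$ with the chi-squared factors independent and shared across $m$. This conditional law does not depend on the past, so with $\widetilde\lambda_m:=\widetilde L_m-\widetilde L_{m-1}$ and
\[
U_j:=\tfrac12\lr{\log\bigl(\tfrac1n\chi_{n}^2\bigr),\dots,\log\bigl(\tfrac1n\chi_{n-k+1}^2\bigr)}\in\R^k,\qquad j=1,\dots,N
\]
(independent chi-squares within each $U_j$, the $U_j$ independent across $j$) one obtains $\widetilde\Lambda_k:=(\widetilde\lambda_1,\dots,\widetilde\lambda_k)=\tfrac1N\sum_{j=1}^N U_j$; by \eqref{E:mu-def}, \eqref{E:sigma-est} and independence of Gram--Schmidt lengths, $\E{U_j}=\mu_{n,\le k}$ and $\Cov(U_j)=\mathrm{Diag}(\sigma_{n,\le k}^2)$, so $\widetilde\Lambda_k$ carries exactly the mean and covariance of the target Gaussian.

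\emph{Step 2: quantitative CLT for the surrogate.} Setting $\xi_j:=\mathrm{Diag}(\sigma_{n,\le k}^2)^{-1/2}(U_j-\mu_{n,\le k})$ --- an i.i.d.\ sequence of centered isotropic $\R^k$ vectors with independent coordinates, each a standardized copy of $\tfrac12\log\chi_d^2$ for some $1\le d\le n$ --- Bentkus's multivariate Berry--Esseen bound over ${\cal{C}}_k$ gives
\[
d\lr{\widetilde\Lambda_k,\ \mathcal N\lr{\mu_{n,\le k},\,\tfrac1N\mathrm{Diag}(\sigma_{n,\le k}^2)}}\ \le\ C\,k^{1/4}\,N^{-1/2}\,\E{\abs{\xi_1}^3}.
\]
Since the standardized fourth moment of $\tfrac12\log\chi_d^2$ is bounded by an absolute constant uniformly in $d\ge 1$ (the only delicate case, small $d$, being checked directly), independence of the coordinates yields $\E{\abs{\xi_1}^3}\le\bigl(\sum_{a,b\le k}\E{\xi_{1,a}^2\xi_{1,b}^2}\bigr)^{3/4}\le C'k^{3/2}$, so this term is at most $C''(k^{7/2}/N)^{1/2}$ --- the origin of the $k^{7/2}$ in \eqref{E:dist-est}.

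\emph{Step 3: controlling the ergodic error.} It then remains to bound $d(\Lambda_k,\widetilde\Lambda_k)$. By a smoothing inequality, for every $\delta>0$ and $G:=\mathcal N(\mu_{n,\le k},\tfrac1N\mathrm{Diag}(\sigma_{n,\le k}^2))$,
\[
d(\Lambda_k,G)\le d(\widetilde\Lambda_k,G)+\mathbb P\bigl(\norm{\Lambda_k-\widetilde\Lambda_k}_2\ge\delta\bigr)+\sup_{C\in{\cal{C}}_k}\mathbb P\bigl(\mathrm{dist}(G,\partial C)\le\delta\bigr),
\]
where, by Nazarov's bound on the Gaussian surface area of convex bodies and $\sigma_{n,i}^2=\psi'(\tfrac{n-i+1}{2})\ge 2/n$, the last term is $\le C k^{1/4}\delta\sqrt{Nn}$. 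For the coupling term one uses $\norm{\Lambda_k-\widetilde\Lambda_k}_\infty\le 2\max_{m\le k}(L_m-\widetilde L_m)$ together with the bound, from Cauchy--Binet applied to the Gram matrix of the first $m$ columns of $X_{N,n}$,
\[
L_m-\widetilde L_m=\frac1N\log\frac{\norm{\wedge^m X_{N,n}}_{op}}{\norm{\wedge^m X_{N,n}V^{(0)}_m}}\ \le\ \frac1N\log\frac{1}{\abs{\inprod{v^{(m)}_1\wedge\cdots\wedge v^{(m)}_m}{e_1\wedge\cdots\wedge e_m}}},
\]
where $v^{(m)}_1,\dots,v^{(m)}_m$ are the top $m$ right singular vectors of $X_{N,n}$. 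By rotation invariance $\mathrm{span}(v^{(m)}_1,\dots,v^{(m)}_m)$ is Haar distributed on the Grassmannian of $m$-planes in $\R^n$ and independent of the singular values, so the right-hand side is governed by the lower tail of the volume of the projection of a random $m$-frame onto the fixed $m$-plane $W^{(0)}_m$ --- exactly the type of small-ball estimate proving Theorem~\ref{T:sums}. That estimate would show that, uniformly in $m\le k$ and with overwhelming probability, this log-ratio is at most a constant times $(m\log(en/m)+\log(N/n))/N$, allowing $\delta$ polynomially small in $N$; balancing the three terms and tracking the logarithmic losses in the small-ball bounds then produces the factor $n\log^2(n)\log^2(N/n)$ multiplying $k^{7/2}/N$ inside the square root in \eqref{E:dist-est}, the residual ``bad-event'' probability being negligible once $N\gtrsim k^{7/2}n\log^2(n)\log^2(N/n)$. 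Summing the contributions of Steps~1--3 gives \eqref{E:dist-est}.

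The main obstacle will be Step~3: establishing a sharp, high-probability, and \emph{uniform-in-$m\le k$} non-asymptotic rate for the alignment of the fixed flag $\{W^{(0)}_m\}_{m\le k}$ with the dominant subspace flag of $X_{N,n}$ even when $n$ grows with $N$. This is precisely where the volume small-ball machinery behind Theorem~\ref{T:sums} is indispensable, and extracting the stated dependence on $n$, $k$ and the logarithmic factors is the delicate part; the i.i.d.\ reduction of Step~1 and the Bentkus bound of Step~2 are comparatively routine.
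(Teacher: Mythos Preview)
Your proposal is correct and follows essentially the same route as the paper: the surrogate $\widetilde\Lambda_k$ is exactly the paper's $\widehat\Lambda_k$, the Bentkus CLT is applied with the same $k^{7/4}/N^{1/2}$ outcome, and the ergodic error in Step~3 is controlled via the same small-ball bound on $L_m-\widetilde L_m$ (the paper's Proposition~\ref{P:pointwise-small-ball}/\eqref{conc-by-sb}) combined with Nazarov's Gaussian boundary estimate. The only cosmetic differences are that the paper applies the stability lemma at the level of the partial sums $S_k=T\Lambda_k$ (using affine invariance of $d$) rather than at $\Lambda_k$ directly, and bounds $\E\|\xi_1\|^3$ via log-concavity and a reverse H\"older inequality rather than your fourth-moment argument; both variants yield the same final bound.
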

\begin{remark}
The arguments in \cite{akemann2012universal,akemann2012universal,akemann2014universalb, akemann2019integrable} strongly suggest (see \S \ref{S:intution-IPS}) that for $k$ fixed and independent of $n$, a necessarily and sufficient condition for $\lambda_1,\ldots, \lambda_k$ to be close to independent and Gaussian is $N\gg n.$ Thus, the $\log^2(n)\log^2(N/n)$ in \eqref{E:dist-est} is likely sub-optimal. It is not clear whether the power $k^{7/2}$ can be improved. 
\end{remark}

For $k\geq 1$ fixed independent of $n,N$, Theorem \ref{T:normal} shows that the top $k$ Lyapunov exponents of $X_{N,n}$ are close to independent Gaussian as soon as $N\gg n\log^2(n)\log^2(N/n).$ This is a significant refinement of the result in \cite{carmona1982exponential} (see also Theorem 5.4 in \cite{bougerol1985concentration}), which states that when $n$ is fixed $\lambda_1$ is asymptotically normal. It also refines the recent result of Reddy \cite[Theorem 11]{reddy2019equality}, which holds only for fixed finite $n$ and does not give estimates at finite $N$. The advantage of Theorem \ref{T:normal} is that it treats simultaneously any number of Lyapunov exponents and gives a rate of convergence. For example, taking $k=n,$ we find that if $N\gg n^{9/2}\log^2(n)\log^2(N/n)$, then \textit{all} Lyapunov exponents of $X_{N,n}$ are approximately independent Gaussians. However, results in articles such as \cite{carmona1982exponential} are for matrix products $A_N\cdots A_1$ in which the entries of $A_i$ have mean zero, variance $1/n$ and satisfy some mild regularity assumptions, whereas our results hold only for the Gaussian case. We conjecture that Theorem \ref{T:normal} holds in this more general setting as well but leave this to future work. 

\section{Prior Work and Intuitions}
The purpose of this section is to give an exposition of prior work and provide  several intuitions for thinking about the matrix products $X_{N,n}$, especially about the differences between the near-ergodic $N\gg n$ and the near-free $n\gg N$ regimes. We do this by first giving in \S \ref{S:intuition-dynamics} a basic intuition from dynamical systems, which suggests that one can think of $N$ as a time variable and $n$ as a system size. This intuition dovetails with the multiplicative ergodic theorem. We proceed in \S \ref{S:intution-IPS} to explain an exact correspondence derived in \cite{akemann2012universal,akemann2014universal,akemann2014universalb,akemann2019integrable} at a physical level of rigor in which $n/N$ plays the role of a time parameter for the evolution of the $n$ singular values of $X_{N,n}$. This helps to explain why even simple linear statistics behave differently depending on the relative size of $n,N$.

\subsection{$X_{N,n}$ at Fixed $n$ as a Dynamical System}\label{S:intuition-dynamics}
One way to intuitively think of $X_{N,n}=A_N\cdots A_1$ is as defining the time $0$ to time $N$ map for a dynamical system in which the time one dynamics are very chaotic and are modelled as multiplication by an iid random matrix. In this analogy, $N$ takes on the role of a time parameter, whereas $n$ denotes the system size. Since large systems take longer to come to equilibrium, we should expect that $N$ and $n$ are ``in tension.'' If we fix $n$ and let $N$ tend to infinity, then the size of the long time image $\norm{X_{N,n}u}$ of an unit length input $u\in \R^n$ satisfies a pointwise ergodic theorem: \begin{equation}\label{E:LLN}
\lim_{N\gives \infty}\frac{1}{N}\log \norm{X_{N,n}u} = E,
\end{equation}
where $E$ is a constant (independent of $u$) depending on the measure $\mu$ according to which the entries of the matrices $A_i$ making up the matrix product $X_{N,n}$ are distributed. This can be proved in a variety of ways (e.g. Corollary 3.2 in \cite{cohen1984stability}). In fact, much more is true. It was shown by Kesten-Furstenberg in \cite{furstenberg1960products}, that this statement tolerates taking a supremum over $u$:
\[\lim_{N\gives \infty} \lambda_1(X_{N,n}) = E\qquad \text{almost surely}.\]
Later, in his seminal work \cite{oseledets1968multiplicative} Oseledets proved the multiplicative ergodic theorem. In the context of iid products of $N$ matrices of size $n\x n$, it says that under some mild conditions on $\mu$ if $n$ is fixed, then the full list of Lyapunov exponents $\lambda_1(X_{N,n}),\ldots, \lambda_n(X_{N,n})$ converges almost surely to a deterministic limit. We refer the reader to \cite{filip2019notes} for a review of the vast literature on this subject and to \cite{bougerol1985concentration} for an exposition specifically about matrix products.

Determining the values of the limiting Lyapunov exponents in the multiplicative ergodic theorem is in general quite difficult and has applications to Anderson localization for random Schr\"odinger operators \cite{bougerol1985concentration,damanik2011short}. 

Moreover, the work of LePage \cite{le1982theoremes} as well as subsequent analysis \cite{carmona1982exponential,bougerol1985concentration} showed that the top Lyapunov exponent of matrix products such as $X_{N,n}$ (not necessarily Gaussian) is asymptotically normal in the sense that there exist $a_{n},b_{N,n}\in \R$ so that
\[b_{N,n}\lr{\lambda_1(X_{N,n})- a_{n}}\quad \stackrel{d}{\longrightarrow}\quad  \mathcal N(0,1),\]
where the $d$ indicates that the convergence is in the sense of distribution. As far as we are aware, all known mathematical proofs of asymptotic normality results hold only for finite fixed $n$, for the top Lyapunov exponent $\lambda_1$ and do not include quantitative rates of convergence. For the real Gaussian case we study, our Theorem \ref{T:normal} overcomes these deficiencies. However, at the physical level of rigor, we refer the reader to the excellent articles \cite{akemann2012universal,akemann2014universal,akemann2014universalb,akemann2015recent, akemann2019integrable} that derive in the case of complex Gaussian matrix products asymptotic normality and much more for the top Lyapunov exponents (cf \S \ref{S:intution-IPS}). 

While the preceding discussion concerned matrix products with any entry distribution $\mu$ with mean $0$ and variance $1/n$, the Gaussian $\mu= \mathcal N(0,1/n)$ considered in this article leads to some significant simplifications. For instance, Newman \cite{newman1986distribution} computed the exact expression, which can be written in terms of the digamma function, for the limiting Lyapunov exponents. Similarly, \eqref{E:LLN} is a simple fact in this case since $\frac{1}{N}\log \norm{X_{N,n}u}$ turns out to be sum of iid random variables (see Lemma \ref{L:pointwise-iid}). These simplifications stem from the fact that the distribution of each matrix $A_i$ is left and right-invariant under multiplication by an orthogonal matrix.

\subsection{$n/N$ as a Time Parameter in an Interacting Particle System}\label{S:intution-IPS} 
In the regime where $n/N$ is bounded away from $0$ and $\infty$ as $n,N\gives \infty$, even the behavior of an innocuous seeming log-linear statistic depends very much on the ratio of $n$ and $N$. Informally, 
\begin{equation}\label{E:linear-stats-intro}
    \log \norm{X_{N,n}u}~\approx~ \mathcal N\lr{-\frac{N}{4n},\frac{N}{4n}} + O\lr{\frac{N}{n^2}}, \qquad u\in \R^n,\,\, \norm{u}=1,
\end{equation}
where $\mathcal N(\mu, \Sigma)$ denotes a Gaussian with mean $\mu$ and covariance $\Sigma.$ As mentioned above, in the Gaussian case we consider in this article, this approximate normality is easy to see since $\log \norm{X_{N,n}u}$ is a sum of iid variables (see Lemma \ref{L:pointwise-iid}). Precise versions of \eqref{E:linear-stats-intro} also hold true when the matrices $A_i$ in the definition \eqref{E:X-def} of $X_{N,n}$ have symmetric but non-Gaussian entries (see Theorem 1 in \cite{hanin2019products}). It is interesting to compare the almost sure convergence to a constant in \eqref{E:LLN} (note that $1/N$ normalization) with the asymptotic normality in \eqref{E:linear-stats-intro}.

The relation \eqref{E:linear-stats-intro} already suggests that $t=n/N$ is an important parameter for interpolating between the ergodic regime, defined by $t=0$ and the asymptotically free regime, in which $t=\infty$. A number of remarkable articles \cite{akemann2012universal,akemann2014universal,akemann2014universalb} and especially \cite{akemann2019integrable} establish a correspondence between $t$ and the time parameter in the stochastic evolution of an interacting particle system. {This correspondence between singular values for products of complex Ginibre matrices and DBM appears to be initially due to Maurice Duits.} 

The particles in question are the limiting Lyapunov exponents $\lambda_i$ of $X_{N,n}$. When $t=0$, they are approximately uniformly spaced (see Theorem \ref{T:normal}) and are interpreted as an initial condition for Dyson Brownian motion (DBM)
\begin{equation}\label{E:DBM}
d\lambda_i = dB_i + \sum_{j\neq i} \frac{dt}{\lambda_i-\lambda_j},\qquad i=1,\ldots,n
\end{equation} 
the dynamics induced on the spectrum of a matrix by allowing each entry to evolve for time $t$ under and independent Brownian motion \cite{dyson1962statistical}. The surprising observation is that, at least in the bulk of the spectrum (i.e. $\lambda_k$ with $k$ proportional to $n$) the joint distribution of the Lyapunov exponents of $X_{N,n}$ satisfies \eqref{E:DBM} at time $t$ with an equally spaced initial condition in the limit when $n/N=t$ and $n,N\gives \infty.$ 

The idea of the derivations in  \cite{akemann2012universal,akemann2014universal,akemann2014universalb, akemann2019integrable} is to use that when $X_{N,n}$ is a product of complex Ginibre matrices, the joint distribution of all of its singular values, at any finite $n,N$, is given by a determinental point process. One may then study the scaling limit of the corresponding determinental kernel at any fixed $t=n/N$. This kernel coincides with the solution to DBM from equally spaced initial conditions, which is also determinental  \cite{johansson2004determinantal}.

A rigorous analysis of the determinental kernel for the joint distribution of singular values for products of complex Gaussian matrices was undertaken in a variety of articles \cite{forrester2013lyapunov, forrester2014eigenvalue, forrester2016singular, liu2016bulk,liu2018lyapunov}. In particular, \cite{liu2016bulk} shows that when $N$ is arbitrary but fixed and $n\gives \infty$, the determinental kernel for singular values in products of $N$ iid complex Gaussian matrices of size $n\x n$ converges to the familiar sine and Airy kernels that arise in the local spectral statistics of large GUE matrices in the bulk and edge, respectively. This agrees with the prediction from \cite{akemann2019integrable}. Indeed, in this regime, the time parameter $t=n/N$ is infinite and the limiting distribution of DBM is that of the  eigenvalues for a large GUE matrix. Moreover, \cite{liu2018lyapunov} rigorously obtained an expression for the limiting determinental kernel when $t=n/N$ is arbitrary in the context of products of complex Ginibre matrices.{We refer the reader also to the subsequent article of Liu-Wang \cite{liu2019phase} that performs a similar analysis for the \textit{eigenvalues} in the same setting. }

Also in the regime where $n/N$ is fixed while $n,N\gives \infty$, we refer the reader to Gorin-Sun \cite{gorin2018gaussian}. This article shows that the fluctuations of the singular values of $X_{N,n}$ around the triangle law always converge to a Gaussian field. We also refer the reader to \cite{ahn2019fluctuations}, which obtains a CLT for linear statistics of top singular values when $n/N$ is fixed and finite.

\section{Idea of Proof: Reduction to Small Ball Estimates}
Before turning to the formal proofs of Theorems \ref{T:global} and \ref{T:normal}, we give a brief overview of our approach, which begins with the following representation (cf e.g. \cite{newman1986distribution,isopi1992triangle}) for sums of Lyapunov exponents from Lemma \ref{lem-Grass-3} (see \S \ref{S:wedge-background}):
\begin{equation}\label{E:wedge-rep-informal}
\lambda_1+\cdots + \lambda_k = \sup_{\substack{\Theta \in \mathrm{Fr}_{n,k}}} \frac{1}{N}\log \norm{X_{N,n}(\Theta)}.
\end{equation}
In the previous line, we've denoted by $\mathrm{Fr}_{n,k}$ the collection of all orthonormal $k-$frames in $\R^n$ (i.e. collections of $k$ orthonormal vectors $v_1,\ldots, v_k$). We have also set
\[X_{N,n}(\Theta)=X_{N,n}\theta_1\wedge\cdots\wedge X_{N,n}\theta_k,\qquad \Theta = (\theta_1,\ldots,\theta_k)\in \mathrm{Fr}_{n,k},\]
where we recall that for $a,b\in \R^n$ $a_1\wedge \cdots \wedge a_k$ is the anti-symmetrization of $a_1\otimes \cdots \otimes a_k$. We refer the reader to \S \ref{S:wedge-background} for more background and to the start of \S 7 in \cite{spivak1970comprehensive} and to \S 2.6.1 in \cite{tao2010epsilon} for more on wedge products. As pointed out in \cite{isopi1992triangle}, information about the sums $\lambda_1+\cdots+\lambda_k$ can easily be translated into the information about their cumulative distribution function, ultimately resulting in Theorem \ref{T:global}. Similarly, the vector of the top $k$ Lyapunov exponents considered in Theorem \ref{T:normal} can be obtained by an affine transformation of the vector of partial sums $\lambda_1,\lambda_1+\lambda_2,\ldots,\lambda_1+\cdots + \lambda_k$. Thus, the focus of our proofs is to obtain precise concentration estimates for the expression on the right hand side of \eqref{E:wedge-rep-informal}. An important idea for analyzing \eqref{E:wedge-rep-informal}, which goes back to the work of Furstenburg-Kesten \cite{furstenberg1960products} is that when $N$ is large, one can almost drop the supremum:
\begin{equation}\label{E:pointwise-informal}
\lim_{N\gives \infty}\abs{\frac{1}{N}\log\norm{X_{N,n}(\Theta)}-\sup_{\substack{\Theta' \in \mathrm{Fr}_{n,k}}} \frac{1}{N}\log\norm{X_{N,n}(\Theta')} } = 0
\end{equation}
for any fixed $\Theta\in \mathrm{Fr}_{n,k}$. As explained below this is plausible since the ratio $s_k(X_{N,n})/s_{k+1}(X_{N,n})$ of the $k^{th}$ and $(k+1)^{st}$ singular values grows exponentially with $N$, causing the wedge product $X_{N,n}(\Theta)$ to align almost entirely with the wedge product of the top $k$ singular vectors of $X_{N,n}$ for almost every $\Theta$. 

The ``pointwise'' quantity $\frac{1}{N}\log \norm{X_{N,n}(\Theta)}$ is a sum of iid random variables (Lemma \ref{L:pointwise-iid}) and can be analyzed using a result of \L ata\l a \cite{latala1997estimation} (see Theorem \ref{Latala}). It then remains to obtain quantitative versions of \eqref{E:wedge-rep-informal} valid for large but finite $N,n$. One possible approach is via energy-entropy estimates using $\eps$-nets on $\mathrm{Fr}_{n,k}$. However, while this gives some results, this approach is suboptimal for large $N$. The reason that $\eps$-nets fail is that, due to the $N^{-1}$ normalization,
\[N~~ \text{large} \quad \Rightarrow\quad \Var_\Theta\left[\frac{1}{N}\log \norm{X_{N,n}(\Theta)}\right]\ll \Var_{X_{N,n}}\left[\frac{1}{N}\log \norm{X_{N,n}(\Theta)}\right]\]
by which we mean that the variance of $\frac{1}{N}\log \norm{X_{N,n}(\Theta)}$ over $\Theta\in \mathrm{Fr}_{n,k}$ for a typical realization of $X_{N,n}$ is much smaller than its variance over the randomness in $X_{N,n}$ for any fixed $\Theta$, causing the optimal net to have constant cardinality.  

The main technical novelty of our proofs is that we  quantify \eqref{E:pointwise-informal} not through net arguments but rather via small ball probabilities for volumes of random projections, which are already known (cf Proposition \ref{Gaussian-matrix-det-est}). The key result is the following: 
\begin{proposition}\label{P:small-ball-bounds}
For any $\eps\in (0,1)$ and any $\Theta \in \mathrm{Fr}_{n,k}$ we have
 \[\mathbb P\lr{\abs{\frac{1}{N}\log \norm{X_{N,n}\Theta} -\sup_{\Theta' \in \mathrm{Fr}_{n,k}}\frac{1}{N}\log \norm{X_{N,n}\Theta'}}\geq \frac{1}{N}\log\lr{\frac{1}{\eps}}}\leq \mathbb P\lr{\norm{P_F(\Theta)}\leq \eps},\]
 where $F$ is a Haar distributed $k-$dimensional subspace of $\R^n$ and 
 \[P_F(\Theta)=P_F\theta_1\wedge \cdots\wedge P_F\theta_k\] 
 with $P_F$ denoting the orthogonal onto $F.$
\end{proposition}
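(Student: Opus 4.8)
The plan is to fix the matrix $X = X_{N,n}$ and its singular value decomposition, and understand geometrically why the pointwise quantity $\frac1N\log\norm{X\Theta}$ can fall below the supremum $\lambda_1+\cdots+\lambda_k$. Write $X = USV^{\top}$ with $U,V$ orthogonal and $S = \mathrm{Diag}(s_1,\dots,s_n)$. By the representation \eqref{E:wedge-rep-informal}, the supremum $\sup_{\Theta'}\frac1N\log\norm{X\Theta'}$ equals $\frac1N\log(s_1\cdots s_k)$ and is attained when $\Theta'$ spans the top $k$ right-singular subspace, i.e. the span of the first $k$ columns of $V$. For a general frame $\Theta$, the Cauchy–Binet formula gives
\[
\norm{X\Theta}^2 \;=\; \sum_{\abs{I}=k} \Big(\prod_{i\in I} s_i^2\Big)\, \big(\text{minor of } V^{\top}\Theta \text{ on rows } I\big)^2 \;\geq\; \Big(\prod_{i=1}^k s_i^2\Big)\,\big(\text{leading }k\times k\text{ minor of }V^{\top}\Theta\big)^2 .
\]
The leading $k\times k$ minor of $V^{\top}\Theta$ is exactly $\det$ of the Gram-type matrix built from projecting the columns of $\Theta$ onto the span of the first $k$ columns of $V$; its absolute value is precisely the $k$-dimensional volume $\norm{P_{F}\theta_1\wedge\cdots\wedge P_F\theta_k} = \norm{P_F(\Theta)}$, where $F$ is that fixed top right-singular subspace. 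Therefore, deterministically,
\[
\frac1N\log\norm{X\Theta} \;\geq\; \frac1N\log(s_1\cdots s_k) + \frac1N\log\norm{P_F(\Theta)} \;=\; \sup_{\Theta'}\frac1N\log\norm{X\Theta'} + \frac1N\log\norm{P_F(\Theta)}.
\]
Since the supremum always dominates $\frac1N\log\norm{X\Theta}$, this bounds the absolute value of the difference by $\frac1N\log(1/\norm{P_F(\Theta)})$, so the event in the proposition is contained in $\{\norm{P_F(\Theta)}\leq \eps\}$.

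The remaining point is that for our Gaussian ensemble, the top right-singular subspace $F$ is Haar-distributed on the Grassmannian $G_{n,k}$. This is the orthogonal-invariance input: each $A_i$ has a distribution invariant under $A_i\mapsto A_i O$ for orthogonal $O$, hence so does $X = A_N\cdots A_1$ under $X\mapsto XO$; consequently $V$ (the matrix of right singular vectors) is Haar on $O(n)$ independently of the singular values, and $F = \mathrm{span}(v_1,\dots,v_k)$ is Haar on $G_{n,k}$. Thus $\mathbb P(\norm{P_F(\Theta)}\leq\eps)$ for Haar $F$ upper-bounds the probability of the bad event, and by invariance we may equivalently fix $F$ and let $\Theta$ be a Haar-rotated copy — the formulation in the statement.

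The only mild subtlety — and the step I would be most careful about — is the Cauchy–Binet/exterior-algebra identification: checking that the leading $k\times k$ minor of $V^{\top}\Theta$ really equals $\pm\norm{P_F(\Theta)}$, and that dropping all non-leading terms in the Cauchy–Binet sum is legitimate (it is, since all terms are nonnegative). This is exactly the kind of wedge-product bookkeeping deferred to \S\ref{S:wedge-background}, so in the write-up I would invoke Lemma \ref{lem-Grass-3} and the volume interpretation of $\norm{\cdot}$ on $\bigwedge^k \R^n$ rather than recompute from scratch. Everything else is a deterministic inequality plus the invariance remark, so I expect no real obstacle.
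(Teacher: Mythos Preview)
Your proposal is correct and follows essentially the same route as the paper's proof (stated there as Lemma \ref{L:small-ball-bounds}): expand $\norm{X_{N,n}(\Theta)}^2$ over the singular basis, keep only the leading $I=\{1,\dots,k\}$ term, identify that term with $\norm{P_F(\Theta)}^2$, and finish with right-orthogonal invariance of the Gaussian ensemble. The only cosmetic difference is language---you phrase the expansion via Cauchy--Binet on $V^{\top}\Theta$, whereas the paper uses the Pythagorean theorem \eqref{E:pyth} and the generalized Gram identity \eqref{E:generalized-gram} from Lemma \ref{lem-Grass-2}; these are the same computation, and indeed the identification you flag as the ``mild subtlety'' is exactly \eqref{E:generalized-gram}.
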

For the proof of Proposition \ref{P:small-ball-bounds} see Lemma \ref{L:small-ball-bounds}. The appearance of small ball probabilities is natural, although perhaps somewhat unexpected. Let us briefly describe why in the simplest case of $k=1$. Denote by $v^{(i)}$ the right eigenvector of $X_{N,n}$ corresponding to the singular value $s_{i}(X_{N,n})$. For any $\theta \in S^{n-1}$, we may write
\[\norm{X_{N,n}\theta}^2 = \sum_{i=1}^n \abs{\inprod{X_{N,n}\theta}{v^{(i)}}}^2.\]
When $N\gg n$, the matrix $X_{N,n}$ is highly degenerate in the sense that there exists a universal constant $C>0$ so that
\[\frac{s_{1}(X_{N,n})}{s_{2}(X_{N,n})}\geq e^{CN/n}.\]
This is easy to see intuitively since in this regime $\lambda_1-\lambda_2 \approx \frac{1}{n}$ (cf Theorem \ref{T:normal}). Hence, 
\begin{equation}\label{E:degen}
    \norm{X_{N,n}\theta}^2 \approx  \abs{\inprod{X_{N,n}\theta}{v^{(1)}}}^2
\end{equation}
unless $\abs{\inprod{\theta}{v^{(1)}}}$ is unusually small. In fact, for all $\theta$ 
\[0\geq \frac{1}{N}\log\norm{X_{N,n}\theta}-\lambda_1 = \frac{1}{2N}\log \lr{\frac{\norm{X_{N,n}\theta}^2}{s_1^2(X_{N,n})}}\geq \frac{1}{N}\log\abs{\inprod{\theta}{v^{(1)}}}.\]
This lower bound is essentially sharp by \eqref{E:degen} unless $\theta$ has small overlap with $v^{(1)}$, an event whose probability is controlled precisely by a small ball estimate. 


\section{Acknowledgements} We are grateful to Vadim Gorin, Maurice Duits, and Gernot Akemann for pointing us to a number of interesting references. We would also like to thank a referee for a very careful and helpful reading of an earlier draft that pointed out a number of inaccuracies and ultimately lead to a substantially improved exposition.

\section{Organization of the Rest of the Article} The rest of this article is structured as follows. First, in \S \ref{S:wedge-background} we collect some well-known results on the relation between the exterior algebra of $\R^n$ and the singular values of any linear map $A:\R^n\gives \R^n.$ We also record several elementary observations (Lemmas \ref{L:polar} and \ref{L:haar-flags}) about polar decompositions and Haar measures on orthonormal frames and their flags. We will use this formalism throughout our proofs. 

Next, in \S \ref{S:iid-background} recalls two kinds of results. The first, Theorem \ref{Latala}, is a result of \L ata\l a \cite{latala1997estimation} that gives  precise information on moments (and hence tail behavior) for sums of independent centered random variables. The second is a set of results related to the multivariate central limit theorem (Theorem \ref{MultiCLT-theo}) and the Gaussian content of boundaries of convex sets (Theorem \ref{T:Nazarov-Ball}). The latter allows us to prove Proposition \ref{Dist-lem}, a stability result for the Kolmogorov-Smirnov-type distance function $d$ used in the statement of Theorem \ref{T:normal}. Section \ref{S:roadmap} follows, containing a brief road map to the proofs of Theorems \ref{T:sums}, \ref{T:global} and \ref{T:normal}. Then, \S \ref{S:small-ball-use} is devoted to explaining how to use small ball estimates on volumes of random projections to formalize the ergodicity \eqref{E:pointwise-informal}.

Further, the results in \S \ref{S:concentration} are used in all our proofs. The main result there is Proposition \ref{P:pointwise}, which together with Proposition \ref{P:pointwise-small-ball} and Lemma \ref{L:pointwise-iid} explains the appearance of chi-squared random variables in the statement of Theorem \ref{T:normal}. The proof of Proposition \ref{P:moments} is the most technical part of our arguments. Next, we complete the proof of Theorem \ref{T:sums} in \S \ref{S:sums-pf}. We then use Theorem \ref{T:sums} to complete in \S \ref{S:global-pf} and \S \ref{S:normal-outline} the proofs of Theorems \ref{T:global} and \ref{T:normal}, respectively.

\section{Singular Values via Wedge Products}\label{S:wedge-background} 
In this section, we recall some background on wedge products and refer the reader to the start of \S 7 in \cite{spivak1970comprehensive} and to \S 2.6.1 in \cite{tao2010epsilon} for more details. The usual $\ell_2$-structure on $\R^n$ gives rise in a functorial way to an $\ell_2$ structure on the exterior powers $\Lambda^k \R^n$. If $ x_{1}, \cdots , x_{k} $ are in $ \mathbb R^{n}$ (e.g. are a frame for an element of $G_{n,k}$) we denote the resulting norm by 
\[\norm{x_1\wedge\cdots\wedge x_k}.\]
If we denote by $ X^{\ast}$ the $ n\times k$ matrix $( x_{1}, \cdots , x_{k})$, the Gram identity reads
\begin{equation}
\label{wedge-def}
 \| x_{1} \wedge \cdots \wedge x_{k} \|  = \sqrt{{\rm det} ( X X^{\ast})} = \vol_k\lr{\mathcal P \lr{x_1,\ldots, x_k}},
\end{equation}
where $\mathcal P \lr{x_1,\ldots, x_k}$ is the parallelopiped spanned by $x_1,\ldots, x_k$. The following Lemma gives a well-known  characterization of products of singular values in terms of norms of wedge products, which we will use repeatedly in the proofs of our results. 

\begin{lemma}
\label{lem-Grass-3}
\noindent Let $ A$ be an $ n\times n$ real matrix with singular values $ s_{1}(A)\geq s_2(A)\geq \cdots \geq s_n(A)$.  If $ \theta_{1}, \cdots , \theta_{k}$ are unit vectors in $ \mathbb R^{n}$, then
\begin{equation}
\label{wedge-4}
\| A\theta_{1}\wedge \cdots \wedge A\theta_{k} \| \leq \sup_{\theta_1',\ldots, \theta_k' \in S^{n-1}}\| A\theta_{1}'\wedge \cdots \wedge A\theta_{k}' \| =\prod_{i=1}^{k} s_{i} (A) , 
\end{equation}
with equality if and only if  $\theta_{i}$ are orthonormal and $ {\rm span} \{ \theta_{i} , i\leq k \} $ is the subspace spanned by the eigenvectors of $AA^{\ast}$ that correspond to the largest singular values of $A.$
\end{lemma}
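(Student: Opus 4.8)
The plan is to reduce everything to the singular value decomposition $A = U\Sigma V^{\ast}$, where $U,V$ are orthogonal and $\Sigma = \mathrm{Diag}(s_1(A),\ldots,s_n(A))$. The functoriality of the exterior power means that $\Lambda^k A = (\Lambda^k U)(\Lambda^k \Sigma)(\Lambda^k V)^{\ast}$; moreover $\Lambda^k U$ and $\Lambda^k V$ are again orthogonal maps on $\Lambda^k \R^n$ (this is the statement that the $\ell_2$-structure on $\Lambda^k\R^n$ is induced functorially, as recalled just above the lemma), so they are norm-preserving. Hence for any unit vectors $\theta_1,\ldots,\theta_k$ one has $\norm{A\theta_1 \wedge \cdots \wedge A\theta_k} = \norm{(\Lambda^k A)(\theta_1\wedge\cdots\wedge\theta_k)} = \norm{(\Lambda^k \Sigma)(\Lambda^k V)^{\ast}(\theta_1\wedge\cdots\wedge\theta_k)}$, and the problem becomes: maximize $\norm{(\Lambda^k\Sigma)\omega}$ over all decomposable unit $k$-vectors $\omega$ lying in the image of the unit ball of $\mathrm{Fr}_{n,k}$ under $\Lambda^k V^{\ast}$ — but since $V$ is orthogonal, $\Lambda^k V^{\ast}$ permutes decomposable $k$-vectors bijectively and preserves norms, so we may simply replace $\theta_i$ by an arbitrary orthonormal (or merely unit) frame and study $\norm{(\Lambda^k\Sigma)(\theta_1\wedge\cdots\wedge\theta_k)}$ directly for the diagonal matrix $\Sigma$.

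Next I would compute in the standard basis $e_1,\ldots,e_n$ of $\R^n$, with respect to which $\{e_I := e_{i_1}\wedge\cdots\wedge e_{i_k} : I = \{i_1 < \cdots < i_k\}\}$ is an orthonormal basis of $\Lambda^k\R^n$ and $\Lambda^k\Sigma$ is diagonal with eigenvalue $s_I(A) := \prod_{j\in I} s_{i_j}(A)$ on $e_I$. Writing $\theta_1\wedge\cdots\wedge\theta_k = \sum_I c_I e_I$, the Gram identity \eqref{wedge-def} gives $\sum_I c_I^2 = \norm{\theta_1\wedge\cdots\wedge\theta_k}^2 = \det(\Theta^{\ast}\Theta) \le 1$, with equality exactly when the $\theta_i$ are orthonormal (this is Hadamard's inequality for the Gram matrix). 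Then
\[
\norm{A\theta_1\wedge\cdots\wedge A\theta_k}^2 = \sum_I s_I(A)^2\, c_I^2 \le \Big(\max_I s_I(A)^2\Big)\sum_I c_I^2 \le \prod_{i=1}^k s_i(A)^2,
\]
using that the maximum of $s_I(A)$ over all size-$k$ subsets $I$ is attained at $I = \{1,\ldots,k\}$, since the $s_i(A)$ are sorted in decreasing order. Taking the supremum side: choosing $\theta_i = e_i$ (or in the original coordinates, the top $k$ right singular vectors of $A$) makes $\theta_1\wedge\cdots\wedge\theta_k = e_{\{1,\ldots,k\}}$ and achieves equality, which proves the displayed identity in \eqref{wedge-4}.

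Finally I would handle the equality characterization. Equality in the chain above forces equality in both inequalities: $\sum_I c_I^2 = 1$, which by Hadamard forces $\theta_1,\ldots,\theta_k$ to be orthonormal; and $\sum_I s_I(A)^2 c_I^2 = (\prod_{i\le k} s_i(A)^2)\sum_I c_I^2$, which forces $c_I = 0$ whenever $s_I(A)^2 < \prod_{i\le k} s_i(A)^2$, i.e. $\theta_1\wedge\cdots\wedge\theta_k$ is supported on those $e_I$ with $s_I(A) = \prod_{i\le k} s_i(A)$ — precisely the top $k$-dimensional eigenspace of $AA^{\ast}$ (with the caveat that if $s_k(A) = s_{k+1}(A)$ there is a genuine choice of which eigenvectors span the subspace, but it is always \emph{a} subspace spanned by eigenvectors for the largest singular values, matching the statement). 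Conversely any orthonormal frame spanning that subspace gives a unit decomposable $k$-vector supported there, hence achieves the maximum. The main obstacle is bookkeeping, not depth: one must be careful that $\Lambda^k V^{\ast}$ really does take the set of decomposable unit $k$-vectors onto itself (so that passing to $\Sigma$ loses nothing) and that the degenerate case $s_k(A) = s_{k+1}(A)$ is phrased so the ``if and only if'' remains literally correct; apart from that everything is a one-line linear-algebra computation once the functoriality of $\Lambda^k$ and the Gram identity \eqref{wedge-def} are in hand.
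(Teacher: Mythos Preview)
Your proof is correct and somewhat more complete than the paper's. The paper argues differently: it observes that for any unit vectors $\theta_1',\ldots,\theta_k'$ one may replace $\theta_j'$ by its projection $\Pi_{\leq j-1}^{\perp}\theta_j'$ onto the orthogonal complement of the earlier vectors without changing the wedge, so the supremum may be taken over orthogonal (hence, after rescaling, orthonormal) frames; it then simply asserts that over orthonormal frames the supremum is attained at the top $k$ right singular vectors. Your route instead diagonalises via the SVD, expands in the Pl\"ucker basis $\{e_I\}$, and combines Hadamard's inequality $\sum_I c_I^2 \le 1$ with the trivial bound $\max_I s_I(A) = \prod_{i\le k} s_i(A)$.

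What each approach buys: the paper's Gram--Schmidt reduction is shorter but leaves the final step (that the sup over orthonormal frames equals $\prod_{i\le k} s_i(A)$) and the entire ``if and only if'' clause essentially unproved. Your SVD/Pl\"ucker computation is slightly longer but makes both inequalities explicit and yields the equality characterisation for free, including a careful handling of the degenerate case $s_k(A)=s_{k+1}(A)$. One small point worth tightening in your write-up: when you pass from ``$c_I=0$ unless $s_I(A)$ is maximal'' to ``$\mathrm{span}\{\theta_i\}$ lies in the relevant eigenspace'', you are implicitly using that a nonzero decomposable $k$-vector lying in $\Lambda^k W$ represents a $k$-plane contained in $W$; this is standard (it is the Pl\"ucker embedding restricted to a sub-Grassmannian) but could be stated.
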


\begin{proof}
The inequality on the left is clear. To derive the equality, note that, for any $\theta_1,',\ldots,\theta_k'\in S^{n-1}$, 
\[
\theta_1'\wedge\cdots\wedge \theta_k' = \theta_1''\wedge \cdots \wedge \theta_k'',
\]
where
\[
\theta_j'' = \Pi_{\leq j-1}^{\perp} \theta_j',\qquad \theta_1'' = \theta_1
\]
and $\Pi_{\leq j-1}^{\perp}$ is the projection onto the orthogonal complement of the span of $\theta_1',\ldots, \theta_{j-1}'.$ This follows immediately from the fact that $a_1\wedge \cdots \wedge a_k$ is zero if $\set{a_j}$ is linearly dependent. Thus, the supremum in \eqref{wedge-4} can be taken over $\theta_1',\ldots, \theta_k'$ that are orthogonal. Over such collections, the supremum is obtained by letting $\theta_1',\ldots, \theta_k'$ be any permutation of the $k$ right singular vectors of $A$. 
\end{proof}

\noindent Next, we record in Lemma \ref{lem-Grass-2} some basic properties of this norm of wedge products that we will use. 

\begin{lemma}
\label{lem-Grass-2}
Let $x,  x_{1}, \cdots , x_{k}$ be vectors in $ \mathbb R^{n}$. Then we have the following basic properties: 
\begin{enumerate}
\item{ \bf Homogeneity}: If $\lambda_i>0$
\begin{equation}
\label{wedge-3}
 \| \lambda x_{1} \wedge \cdots \wedge  \lambda_{k} x_{k} \|  = \left(  \prod_{i=1}^{n} \lambda_{i} \right)   \|  x_{1} \wedge \cdots \wedge x_{k} \| 
\end{equation}
\item{\bf Projection formula}: Let $P_{V_{i}^{\perp}}$ be the orthogonal projection onto the orthogonal complement of $ V_{i} := {\rm span} \{ x_{1}, \cdots , x_{k} \} , V_{0} = \{0\} , 1\leq k \leq n-1$. We have
\begin{equation}
\label{wedge-1.5}
 \| x_{1} \wedge \cdots \wedge x_{k} \| = \prod_{i=1}^{k}  \| P_{V_{i-1}^{\perp}} x_{i} \|_{2}. 
\end{equation}
\item{\bf Pythagorean Theorem}: Let $e_1,\ldots, e_n$ be any orthonormal basis of $\R^n$, and define for each multi-index $I=(i_1,\ldots, i_k)$
\[e_I := e_{i_1}\wedge \cdots \wedge e_{i_k}.\]
Then, 
\begin{equation}\label{E:pyth}
    \norm{x_1\wedge\cdots\wedge x_k}^2 = \sum_{\substack{I=\lr{i_1,\ldots, i_k}\\ 1\leq i_1<\cdots<i_k\leq n}} \inprod{x_1\wedge \cdots \wedge x_k}{e_I}^2.
\end{equation}
\item {\bf Generalized Gram Identity}: Let $\Theta =\lr{\theta_1,\ldots,\theta_k}$ be an orthonormal system of $k$ vectors in $\R^n$ and write $P_\Theta$ for the orthogonal projection onto the span of the $\theta_i$. Consider arbitrary linearly independent vectors $v_1,\ldots, v_k$ in $\R^n$, and denote by $V$ the $n\x k$ matrix whose columns are $v_i$. Then
\begin{equation}\label{E:generalized-gram}
   \inprod{v_1\wedge\cdots \wedge v_k}{\theta_1\wedge\cdots\wedge \theta_k}^2= \det(P_{\Theta}VV^*P_\Theta) = \norm{P_{\Theta}v_1\wedge\cdots\wedge P_{\Theta}v_k}^2.
\end{equation}
\end{enumerate}
\end{lemma}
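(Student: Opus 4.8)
The plan is to derive all four items from the Gram identity \eqref{wedge-def} together with multilinearity and alternation of the wedge product and the Cauchy--Binet formula; no part requires anything beyond linear algebra, so I will only indicate the reductions.

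For \textbf{homogeneity} (reading the left side as $\norm{\lambda_1 x_1\wedge\cdots\wedge\lambda_k x_k}$ with the product over $i=1,\ldots,k$), multilinearity gives $\lambda_1 x_1\wedge\cdots\wedge\lambda_k x_k = \lr{\prod_i\lambda_i}\,x_1\wedge\cdots\wedge x_k$ and the identity follows from positive homogeneity of the norm; equivalently, with $\Lambda=\mathrm{Diag}(\lambda_1,\ldots,\lambda_k)$ the matrix of the $\lambda_i x_i$ is $X^{\ast}\Lambda$ and $\det(\Lambda X X^{\ast}\Lambda)=(\det\Lambda)^2\det(XX^{\ast})$, so \eqref{wedge-def} finishes it. For the \textbf{projection formula}, set $u_i:=P_{V_{i-1}^{\perp}}x_i$ (so $u_1=x_1$); since $x_i-u_i\in\mathrm{span}\{x_1,\ldots,x_{i-1}\}=\mathrm{span}\{u_1,\ldots,u_{i-1}\}$, alternation of the wedge product gives $x_1\wedge\cdots\wedge x_k=u_1\wedge\cdots\wedge u_k$ (expand and discard repeated factors, exactly as in the proof of Lemma \ref{lem-Grass-3}), and since the $u_i$ are pairwise orthogonal their Gram matrix is $\mathrm{Diag}(\norm{u_i}_2^2)$, so \eqref{wedge-def} yields $\norm{u_1\wedge\cdots\wedge u_k}=\prod_i\norm{u_i}_2$.

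For the \textbf{Pythagorean theorem}, fix the orthonormal basis $e_1,\ldots,e_n$ and let $X^{\ast}$ be the $n\times k$ matrix of coordinates of $x_1,\ldots,x_k$ in this basis. The Cauchy--Binet formula gives $\det(XX^{\ast})=\sum_{1\le i_1<\cdots<i_k\le n}\det(X^{\ast}_I)^2$, where $X^{\ast}_I$ is the $k\times k$ submatrix of $X^{\ast}$ on rows $I=(i_1,\ldots,i_k)$, and expanding $x_1\wedge\cdots\wedge x_k$ in the $e$-basis and pairing with $e_I$ identifies $\det(X^{\ast}_I)=\inprod{x_1\wedge\cdots\wedge x_k}{e_I}$ (the nonzero contributions coming from permutations of $I$). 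Then \eqref{E:pyth} is just \eqref{wedge-def} rewritten. Finally, for the \textbf{generalized Gram identity}, the equality $\det(P_{\Theta}VV^{\ast}P_{\Theta})=\norm{P_{\Theta}v_1\wedge\cdots\wedge P_{\Theta}v_k}^2$ is \eqref{wedge-def} applied to the vectors $P_{\Theta}v_i$, whose matrix is $P_{\Theta}V$ and which satisfy $(P_{\Theta}V)(P_{\Theta}V)^{\ast}=P_{\Theta}VV^{\ast}P_{\Theta}$ since $P_{\Theta}^{\ast}=P_{\Theta}$. For the first equality, extend $\theta_1,\ldots,\theta_k$ to an orthonormal basis of $\R^n$ and apply the Pythagorean theorem just proved in this basis to $P_{\Theta}v_1,\ldots,P_{\Theta}v_k$: only the term $I=(1,\ldots,k)$ survives because each $P_{\Theta}v_j$ lies in $\mathrm{span}\{\theta_1,\ldots,\theta_k\}$, and that term equals $\inprod{v_1\wedge\cdots\wedge v_k}{\theta_1\wedge\cdots\wedge\theta_k}^2$ because $\inprod{P_{\Theta}v_j}{\theta_i}=\inprod{v_j}{\theta_i}$ for $i\le k$.

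The steps are all routine; the only point that needs care is consistency of conventions for the induced $\ell_2$-structure on $\Lambda^k\R^n$. I would take \eqref{wedge-def} as the definition of $\norm{\cdot}$ on decomposable elements (and of the inner product by polarization), so that the Pythagorean theorem is a genuine statement whose content is the Cauchy--Binet formula, rather than a tautology built into the definition; all other parts then follow as above. This is the only ``obstacle,'' and it is purely bookkeeping.
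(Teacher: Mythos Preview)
Your proposal is correct and follows essentially the same lines as the paper's proof. The one noteworthy difference is in the Pythagorean theorem: the paper treats it as an immediate consequence of the fact that the $e_I$ form an orthonormal basis in the functorially induced $\ell_2$-structure on $\Lambda^k\R^n$, whereas you take \eqref{wedge-def} as primitive and recover \eqref{E:pyth} via Cauchy--Binet. Your explicit discussion of this convention point is apt, and your route has the virtue of making the content of \eqref{E:pyth} visible rather than buried in the definition; the paper's route is shorter but assumes the reader accepts the induced inner product on $\Lambda^k\R^n$ without further comment. For the generalized Gram identity the two arguments are equivalent: the paper reduces to $\theta_j=e_j$ by an orthogonal change of basis and expands the wedge directly, while you extend $\theta_1,\ldots,\theta_k$ to a basis and invoke the Pythagorean theorem you just proved --- same computation, different packaging.
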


\begin{proof}
\noindent Homogeneity is immediate from the multi-linearity of the determinant \eqref{wedge-def}. The projection formula \eqref{wedge-1.5} follows from \eqref{wedge-def} and the fact that $\sqrt{\det(XX^*)}$ is the volume of the parallelopiped spanned by $x_1,\ldots, x_k.$. Next, the Pythagorean theorem follows from the fact that in the definition of the $\ell_2$ structure on $\Lambda^k\R^n,$ 
\[\set{e_I,\, I = \lr{i_1,\ldots,i_k},\, 1\leq i_1<\cdots< i_k\leq n}\] 
is an orthonormal basis. Finally, to show  \eqref{E:generalized-gram}, assume first that
\[\theta_j=e_j,\qquad j=1,\ldots,k\] 
are the first $k$ standard unit vectors. Then the right equality follows immediately from the Gram identity \eqref{wedge-def}. To see the left equality, write
\[v_{j}=\sum_{i=1}^n v_{j,i}e_i.\]
We have
\begin{align*}
   v_1\wedge\cdots \wedge v_k &=  \sum_{i_1,\ldots,i_k} \prod_{j=1}^k v_{j,i_j} e_{i_1}\wedge \cdots \wedge e_{i_k}= \sum_{\substack{I=\lr{i_1,\ldots,i_k}\\
   1\leq i_1<\cdots<i_k\leq n}} \lr{\sum_{\sigma \in S_k} (-1)^{\mathrm{sgn}(\sigma)} \prod_{j=1}^k v_{j,\sigma(j)}} e_I.
\end{align*}
Hence, writing $V_k$ for the matrix obtained from $V$ by keeping only the first $k$ rows, we find from the Pythagorean theorem \eqref{E:pyth}, 
\[\inprod{v_1\wedge\cdots \wedge v_k}{\theta_1\wedge\cdots\wedge \theta_k}^2 = \det(V_k)^2=\det(V_kV_k^*).\]
The case of general $\theta_i$ follows by considering any orthogonal matrix $U$ satisfying
\[\theta_i = U e_i,\qquad i = 1,\ldots,k.\]
Then
\[\inprod{v_1\wedge\cdots \wedge v_k}{\theta_1\wedge\cdots\wedge \theta_k}^2 = \det((U^T V)_k(U^TV)_k^*)=\det(P_\Theta V (P_\Theta V)^*).\]

\end{proof}

\subsection{Haar Measure on Frames}
It well-know that if $\xi$ is a standard Gaussian on $\R^n$, then $\widehat{\xi}=\xi/\norm{\xi}$ is independent of $\norm{\xi}$ and that $\widehat{\xi}$ is uniform on the unit sphere. We will need natural generalizations of these facts to orthonormal frames, Lemma \ref{L:polar} and \ref{L:haar-flags}.
\begin{lemma}[Polar Decomposition for Haar Measure on Flags]\label{L:polar}
Fix integers $n\geq k\geq 1.$ Let $\xi_1,\ldots, \xi_k\in \R^n$ be independent standard Gaussian random vectors. The following collections of random variables are independent:
\begin{equation}\label{E:norms-angles}
    \set{\norm{\xi_1},\norm{\xi_1\wedge \xi_2},\ldots, \norm{\xi_1\wedge \cdots \wedge \xi_k}},\qquad \set{\frac{\xi_1}{\norm{\xi_1}},\ldots, \frac{\xi_1\wedge \cdots \wedge \xi_k}{\norm{\xi_1\wedge \cdots \wedge \xi_k}}}.
\end{equation}
Moreover, denote by $P_{\leq i}$ the orthogonal projection onto the complement of the span of $\xi_1,\ldots, \xi_i.$ Then, the random variables terms $\norm{P_{\leq i-1} \xi_i}$ are joiltly independent.
\end{lemma}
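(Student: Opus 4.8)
The plan is to use the standard Gaussian rotational invariance together with a Gram--Schmidt decomposition of the vectors $\xi_1,\ldots,\xi_k$. Write $\xi_i = \sum_{j\le i} r_{ij} u_j$, where $u_1,\ldots,u_k$ is the orthonormal frame produced by the Gram--Schmidt process applied to $\xi_1,\ldots,\xi_k$ and $r_{ij} = \inprod{\xi_i}{u_j}$ are the entries of the (lower-triangular in the appropriate sense) change-of-basis matrix. The key elementary fact is that conditionally on $u_1,\ldots,u_{i-1}$, the vector $\xi_i$ is still a standard Gaussian, so its projection $P_{\le i-1}\xi_i$ onto the orthogonal complement of $\mathrm{span}\{\xi_1,\ldots,\xi_{i-1}\}$ is an $(n-i+1)$-dimensional standard Gaussian \emph{independent} of $u_1,\ldots,u_{i-1}$, and its direction $u_i = P_{\le i-1}\xi_i/\norm{P_{\le i-1}\xi_i}$ is uniform on the unit sphere of that complement and independent of the radial part $\norm{P_{\le i-1}\xi_i}$. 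Iterating this over $i=1,\ldots,k$ gives that the collection $\{\norm{P_{\le i-1}\xi_i}\}_{i=1}^k$ is jointly independent, and moreover jointly independent of the frame $(u_1,\ldots,u_k)$ (equivalently of the flag they generate).

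Concretely, I would first record the single-step statement: if $\eta$ is a standard Gaussian on a $d$-dimensional subspace, then $\norm{\eta}$ and $\eta/\norm{\eta}$ are independent, with $\norm{\eta}^2 \sim \chi_d^2$ and $\eta/\norm{\eta}$ uniform on the sphere; this is the $k=1$ case cited just before the lemma. Then I would set up the filtration $\mathcal{F}_i = \sigma(\xi_1,\ldots,\xi_i)$ and argue by induction on $i$ that, conditionally on $\mathcal{F}_{i-1}$, the component of $\xi_i$ orthogonal to $V_{i-1} := \mathrm{span}\{\xi_1,\ldots,\xi_{i-1}\}$ is a standard Gaussian on $V_{i-1}^\perp$, using that the Gaussian measure is invariant under the orthogonal group and that $V_{i-1}$ is $\mathcal F_{i-1}$-measurable. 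This conditional-independence-of-the-past step immediately yields that $\norm{P_{\le i-1}\xi_i}$ is independent of $\mathcal F_{i-1}$, hence (chaining) that all the $\norm{P_{\le i-1}\xi_i}$ are mutually independent. The projection formula \eqref{wedge-1.5} then identifies $\norm{\xi_1\wedge\cdots\wedge\xi_j} = \prod_{i=1}^j \norm{P_{\le i-1}\xi_i}$, so the first collection in \eqref{E:norms-angles} is a function of the independent radii; and the normalized wedges $\xi_1\wedge\cdots\wedge\xi_j/\norm{\xi_1\wedge\cdots\wedge\xi_j}$ depend only on the frame $(u_1,\ldots,u_j)$ (they span the same oriented subspace), which is built from the \emph{directions} $u_i$, each independent of the radii. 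This gives the asserted independence of the two collections.

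I do not expect a serious obstacle; the only point that needs a little care is making the conditioning argument rigorous, i.e. justifying that conditionally on $\mathcal{F}_{i-1}$ the orthogonal part of $\xi_i$ is genuinely a standard Gaussian on the (random but $\mathcal F_{i-1}$-measurable) subspace $V_{i-1}^\perp$, and that this conditional law does not depend on the conditioning. This is handled by writing $\xi_i = P_{V_{i-1}}\xi_i + P_{V_{i-1}^\perp}\xi_i$, noting both summands are Gaussian with the stated (conditional) covariances because $P_{V_{i-1}}$ is $\mathcal F_{i-1}$-measurable and $\xi_i$ is independent of $\mathcal F_{i-1}$, and then applying the $k=1$ polar fact fiberwise. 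One should also be slightly careful on the measure-zero event that $\xi_1,\ldots,\xi_k$ are linearly dependent, but this has probability zero since the $\xi_i$ are absolutely continuous, so it can be ignored throughout.
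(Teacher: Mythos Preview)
Your proposal is correct and follows essentially the same route as the paper: induction on $k$ using rotational invariance of the Gaussian, with the inductive step carried by the observation that, conditionally on the span of $\xi_1,\ldots,\xi_{i-1}$, the orthogonal component $P_{\leq i-1}\xi_i$ is a standard Gaussian on the complement whose radial and angular parts are independent. The only difference is organizational---you package everything at once through the Gram--Schmidt frame $(u_1,\ldots,u_k)$ and the filtration $\mathcal F_i$, whereas the paper peels off $\xi_1$ explicitly, invokes the elementary fact that $X\perp Y$ and $X\perp Z\,|\,Y$ imply $X\perp (Y,Z)$, and then derives the joint independence of the $\norm{P_{\leq i-1}\xi_i}$ as a separate consequence at the end.
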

\begin{proof}
We begin by recalling a fact from elementary probability. Namely, let $X,Y,Z$ be any random variables defined on the same probability space. Then,
\begin{equation}\label{E:fact}
    X\perp Y\qquad \text{and}\qquad X\perp Z | Y\qquad \Rightarrow\qquad X \perp (Y,Z).
\end{equation}
In words, if $X$ is independent of $Y$ and $Z$ is independent of $X$ given $Y$, then, $(Y,Z)$ is independent of $X.$  

We proceed by induction on $k.$ The case $k=1$ follows from the fact that the radial and angular parts of a standard Gaussian are independent. Suppose now $k\geq 2$ and we have proved the statement for $k-1.$ For any $\xi \in \R^k\backslash \set{0}$, let us write $P_{\xi^\perp}$ for the orthogonal projection onto the orthogonal complement to the line spanned by $\xi.$ Define for $\ell=2,\ldots,k$
\[\xi_\ell':=P_{\xi_1^\perp}\xi_\ell.\]
Note that 
\[\norm{\xi_1\wedge\cdots\wedge \xi_{\ell}}=\norm{\xi_1}\norm{\xi_2'\wedge\cdots \wedge\xi_\ell'}\]
and that
\[\xi_1\wedge\cdots\wedge \xi_{\ell}=\xi_1\wedge \xi_2' \wedge \cdots \wedge \xi_\ell'.\]
With this notation, it is enough to show that the collections
\[\set{\norm{\xi_1},\norm{\xi_2'},\ldots,\norm{\xi_2'\wedge\cdots\wedge \xi_k'}},\qquad  \set{\frac{\xi_1}{\norm{\xi_1}}, \frac{\xi_2'}{\norm{\xi_2'}},\ldots, \frac{\xi_2'\wedge\cdots\wedge \xi_k'}{\norm{\xi_2'\wedge\cdots\wedge \xi_k'}}}\]
are independent since if $X,Y$ are independent, then so are $f(X),g(Y)$ for any measurable functions $f,g$. To see this, first observe that, aside from $\norm{\xi_1}$, all other random variables in all both collections are measurable functions of $\xi_1/\norm{\xi_1},\xi_2,\ldots, \xi_k$. Hence, $\norm{\xi_1}$ is independent of all other variables in both collections. It therefore remains to check only that  
\[\mathcal A = \set{\norm{\xi_2'},\ldots,\norm{\xi_2'\wedge\cdots\wedge \xi_k'}}, \qquad \mathcal B = \set{\frac{\xi_1}{\norm{\xi_1}},\frac{\xi_2'}{\norm{\xi_2'}},\ldots, \frac{\xi_2'\wedge\cdots\wedge \xi_k'}{\norm{\xi_2'\wedge\cdots\wedge \xi_k'}}}\]
are independent. Observe that, given, $\xi_1/\norm{\xi_1}$, the random variables $\xi_\ell',\,\ell=2,\ldots,k$ are projections of iid standard Gaussians onto a fixed dimension $k$ subspace and hence are themselves iid standard Gaussians. By the inductive hypothesis, conditional on $\xi_1/\norm{\xi_1}$, the collection $\mathcal A$ is independent of the collection $\mathcal B\backslash \set{\xi_1/\norm{\xi_1}}$. Moreover, the random variables in $\mathcal A$ are independent of $\xi_1$. Invoking \eqref{E:fact} therefore completes the proof of the statement that the two collections in \eqref{E:norms-angles} are independent. To finish the proof of this Lemma, let us check that the terms
\begin{equation}\label{E:norms}
    \norm{\xi_1},\norm{P_{\leq 1}\xi_2},\ldots, \norm{P_{\leq k-1} \xi_k}
\end{equation}
are independent by induction on $k.$ When $k=1$ the claim is trivial. For the inductive step, use the projection formula \eqref{wedge-1.5} to we write
\[\norm{P_{\leq k-1} \xi_k}=\norm{\frac{\xi_1\wedge\cdots\wedge \xi_{k-1}}{\norm{\xi_1\wedge\cdots\wedge \xi_{k-1} }}\wedge \xi_k}.\]
By the first part of this Lemma, we know that 
\[\frac{\xi_1\wedge\cdots\wedge \xi_{k-1}}{\norm{\xi_1\wedge\cdots\wedge \xi_{k-1} }},\qquad \set{\norm{\xi_1\wedge \cdots \wedge \xi_j},\qquad j\leq k-1}\]
are independent. Hence, since $\xi_k$ is independent of $\xi_1,\ldots, \xi_{k-1}$, we conclude that the terms in \eqref{E:norms} are indepenedent as well.
\end{proof}
\noindent We will also use this following result:
\begin{lemma}[Haar Measure on Flags]\label{L:haar-flags}
Fix integers $n\geq k\geq 1.$ Let $\xi_1,\ldots, \xi_k\in \R^n$ be independent standard Gaussian random vectors. For each $i=1,\ldots, k$ define
\[\xi_i' = \begin{cases}\xi_1,&\quad i=1\\ P_{V_{i-1}} \xi_i,&\quad i>1\end{cases},\]
where $V_{i-1}=\mathrm{Span}\set{\xi_j,\, 1\leq j<i }^\perp$ and $P_{V_{i-1}}$ is the orthogonal projection onto $V_{i-1}$. Then $\set{\xi_i'/\norm{\xi_i'},\, i=1,\ldots, k}$ is distributed according to Haar measure on the space of such flags of orthonormal frames. In particular, if $v_1,\ldots, v_k$ is Haar-distributed in the space of $k-$frames in $\R^n$, then
\begin{equation}\label{E:gaussian-frame}
    v_1\wedge \cdots \wedge v_k \stackrel{d}{=} \frac{\xi_1\wedge \cdots \wedge \xi_k}{\norm{\xi_1\wedge \cdots \wedge \xi_k }} 
\end{equation}
\end{lemma}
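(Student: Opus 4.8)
The plan is to exploit the orthogonal invariance of the Gaussian measure on $\R^n$. Let $O$ be any fixed element of the orthogonal group $O(n)$. Then $(O\xi_1,\ldots,O\xi_k)$ has the same joint distribution as $(\xi_1,\ldots,\xi_k)$, since each $\xi_i$ is a standard Gaussian and these are independent. The construction $i\mapsto \xi_i'$ (Gram--Schmidt against the previously chosen directions, applied to $\xi_i$) is equivariant under the $O(n)$-action: replacing each $\xi_j$ by $O\xi_j$ replaces each $V_{i-1}$ by $OV_{i-1}$ and each $\xi_i'$ by $O\xi_i'$, hence replaces the normalized flag $\set{\xi_i'/\norm{\xi_i'}}$ by $\set{O\xi_i'/\norm{\xi_i'}}$, which is exactly the image of the original flag under $O$. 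Therefore the law of $\set{\xi_i'/\norm{\xi_i'},\,i=1,\ldots,k}$ on the (compact) space of orthonormal flags is invariant under the transitive $O(n)$-action, and by uniqueness of the invariant probability measure it must be Haar measure. First I would note that the $\xi_i'$ are a.s. nonzero and that $\xi_i'\perp\xi_j'$ for $i\ne j$ by construction, so the normalized vectors do form an orthonormal $k$-frame and the flag structure (the sequence of nested spans) is well defined; this makes the invariance argument rigorous.

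For the displayed identity \eqref{E:gaussian-frame}, I would first observe that the wedge $v_1\wedge\cdots\wedge v_k$ of a Haar-distributed $k$-frame is, up to sign, determined by the oriented $k$-plane $\mathrm{Span}\set{v_i}$ together with the (unit) volume element, and that since $\set{v_i}$ is orthonormal we have $\norm{v_1\wedge\cdots\wedge v_k}=1$; so the left-hand side is the (random) unit decomposable $k$-vector associated to a uniformly random oriented $k$-subspace. By the first part of the lemma, $\set{\xi_i'/\norm{\xi_i'}}$ is such a Haar-distributed flag, and a flag of orthonormal frames wedges to the same unit $k$-vector as its associated orthonormal frame, so $\frac{\xi_1'\wedge\cdots\wedge\xi_k'}{\norm{\xi_1'\wedge\cdots\wedge\xi_k'}}$ has the law of the right-hand side of \eqref{E:gaussian-frame}. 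It then remains to identify $\frac{\xi_1'\wedge\cdots\wedge\xi_k'}{\norm{\xi_1'\wedge\cdots\wedge\xi_k'}}$ with $\frac{\xi_1\wedge\cdots\wedge\xi_k}{\norm{\xi_1\wedge\cdots\wedge\xi_k}}$: but the Gram--Schmidt procedure only subtracts from $\xi_i$ a linear combination of $\xi_1,\ldots,\xi_{i-1}$, so by multilinearity and antisymmetry of the wedge product $\xi_1\wedge\cdots\wedge\xi_k=\xi_1'\wedge\cdots\wedge\xi_k'$ exactly (this is the same cancellation used in the proof of Lemma \ref{lem-Grass-3}), and the two normalized $k$-vectors coincide.

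The one genuine subtlety — the main obstacle, such as it is — is being precise about what ``Haar measure on the space of such flags'' means and why it exists and is unique: one should fix the underlying space as, say, the flag manifold $O(n)/(O(n-k))$ (equivalently the Stiefel manifold $V_k(\R^n)$ of orthonormal $k$-frames, which carries a unique $O(n)$-invariant probability measure by compactness), observe that the map $O\mapsto O\cdot(\text{fixed reference flag})$ pushes forward normalized Haar measure on $O(n)$ to this invariant measure, and then the equivariance computation above shows the law of our constructed flag is $O(n)$-invariant, hence equals it. Everything else is a routine unwinding of the wedge-product identities already recorded in Lemma \ref{lem-Grass-2} and the proof of Lemma \ref{lem-Grass-3}.
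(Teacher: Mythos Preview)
Your proposal is correct and follows essentially the same route as the paper: orthogonal invariance of the standard Gaussian, $O(n)$-equivariance of the Gram--Schmidt construction, and uniqueness of the invariant probability measure on the Stiefel manifold, followed by the identity $\xi_1\wedge\cdots\wedge\xi_k=\xi_1'\wedge\cdots\wedge\xi_k'$ to deduce \eqref{E:gaussian-frame}. If anything, you are more explicit than the paper about the equivariance and about pinning down the homogeneous space on which the Haar measure lives, which is a welcome clarification rather than a departure.
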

\begin{proof}
The random variable $\lr{\xi_i',\, i=1,\ldots,k}$ clearly takes values in the set of $k$-frames in $\R^n.$ Moreover, it is invariant under the action of the orthogonal group on such frames and hence must be distributed according to the Haar measure. Indeed, since the angular part of a standard Gaussian is uniform on the sphere, $\xi_1'/\norm{\xi_i'}$ is uniform on $S^{n-1}$. Similarly, $\xi_2'$ is a standard Gaussian in the orthogonal complement ${\xi_1'}^\perp$ to $\xi_i'.$ Hence, $\xi_2'/\norm{\xi_2'}$ is uniform on the unit sphere in ${\xi_1'}^\perp$ and so on. Finally, to derive \eqref{E:gaussian-frame}, note that
\[
\xi_1\wedge \cdots \wedge \xi_k  = \xi_{1}'\wedge \cdots \xi_k'
\]
since the wedge product of any linearly dependent set of vectors vanishes. Combining this with the projection formula \eqref{wedge-1.5}, we conclude that
\[
\frac{\xi_1\wedge \cdots \wedge \xi_k }{\norm{\xi_1\wedge \cdots \wedge \xi_k }} = \frac{\xi_1'}{\norm{\xi_1'}}\wedge\cdots \wedge \frac{\xi_k'}{\norm{\xi_k'}}
\]
and \eqref{E:gaussian-frame} follows from the first part of this Lemma. 
\end{proof}

\section{Background on Sums of Independent Random Variables}\label{S:iid-background}
\subsection{A Result of \L ata\l a: Precise Behavior for Moments of Sums}

\noindent In the proof of our pointwise esimate Proposition \ref{P:pointwise}, we will use the following result of R. \L ata\l a \cite[Thm. 2, Cor. 2, Rmk. 2]{latala1997estimation}: 

\begin{theorem}
\label{Latala}
\noindent Let $ X_{1}, \cdots , X_{N}$ be mean zero,  independent r.v. and $ p\geq 1$. Then 
\begin{equation}
\label{Latala-1}
\left( \mathbb E\left[ \bigg| \sum_{j=1}^{N} X_{j}  \bigg|^{p}\right] \right)^{\frac{1}{p}} \simeq  \inf\left\{ t>0 : \sum_{j=1}^{N} \log \left[ \mathbb E | 1+ \frac{X_{j}}{ t}|^{p} \right] \leq p \right\}, 
\end{equation}
where $a\simeq b$ means there exist universal constants $c_1,c_2$ so that $c_1a \leq b \leq ac_2.$ Moreover, if $ X_{i}$ are also identically distributed then 
\begin{equation}
\label{Latala-2}
\left( \mathbb E \left[\bigg| \sum_{j=1}^{N} X_{j}  \bigg|^{p}\right] \right)^{\frac{1}{p}} \simeq \sup_{ \max\{ 2, \frac{ p}{ N} \} \leq s \leq p }  \frac{p}{s} \left( \frac{ N}{p}\right)^{\frac{1}{s}} \| X_{i} \|_{s}  .
\end{equation}
\end{theorem}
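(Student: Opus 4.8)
\emph{Approach.} Theorem~\ref{Latala} is \L ata\l a's moment comparison theorem from \cite{latala1997estimation}, which we use as a black box; here is how I would approach a proof. Write $S=\sum_{j}X_{j}$ and $t_{0}:=\inf\{t>0:\sum_{j}\log\mathbb{E}|1+X_{j}/t|^{p}\le p\}$; this is well defined because the sum tends to $\infty$ as $t\to0$ and to $0$ as $t\to\infty$, and by definition of the infimum $\sum_{j}\log\mathbb{E}|1+X_{j}/t|^{p}>p$ for every $t<t_{0}$, so \eqref{Latala-1} is the assertion $\|S\|_{p}\asymp t_{0}$. The i.i.d.\ form \eqref{Latala-2} would follow from \eqref{Latala-1} by a direct computation: there the constraint collapses to $\mathbb{E}|1+X_{1}/t|^{p}\le e^{p/N}$, so $t_{0}$ is, up to constants, the level solving $\mathbb{E}(|1+X_{1}/t|^{p}-1)\asymp\min\{1,p/N\}$; since $X_{1}$ is centered the linear term drops, and since $\big||1+x|^{p}-1-px\big|$ behaves like $p^{2}x^{2}$ for $|x|\lesssim1/p$ and like a genuine high power of $|x|$ for larger $|x|$ (with a sub-exponential transition in between), this reduces to balancing, for each $s\in[\max\{2,p/N\},p]$, the truncated moment $\binom{p}{s}t^{-s}\mathbb{E}|X_{1}|^{s}\asymp(ep/s)^{s}t^{-s}\|X_{1}\|_{s}^{s}$ against $p/N$; solving for $t$ and keeping the worst $s$ yields $t_{0}\asymp\sup_{\max\{2,p/N\}\le s\le p}\tfrac{p}{s}(N/p)^{1/s}\|X_{1}\|_{s}$.

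\emph{Lower bound $\|S\|_{p}\gtrsim t_{0}$.} I would argue contrapositively at each $t<t_{0}$, where $\sum_{j}\log\mathbb{E}|1+X_{j}/t|^{p}>p$. Using $\mathbb{E}|1+X_{j}/t|^{p}\ge1$ (Jensen, by centering) together with the Taylor estimate, $\log\mathbb{E}|1+X_{j}/t|^{p}\lesssim\tfrac{p^{2}}{t^{2}}\mathbb{E}[X_{j}^{2}\mathbf{1}_{|X_{j}|\le t/p}]+\tfrac1{t^{p}}\mathbb{E}[|X_{j}|^{p}\mathbf{1}_{|X_{j}|>t}]+(\text{intermediate sub-exponential shells})$, so one of these regimes must carry a fixed fraction of the total $p$. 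If it is the quadratic core, then $\sum_{j}\mathbb{E}[X_{j}^{2}\mathbf{1}_{|X_{j}|\le t/p}]\gtrsim t^{2}/p$; bounding $\|S\|_{p}\ge\|S_{\mathrm{core}}\|_{p}$ by Jensen in the remaining centered variables and applying a Paley--Zygmund argument to the recentered core gives $\|S\|_{p}\gtrsim t$. If it is the tail, then $\sum_{j}\mathbb{E}[|X_{j}|^{p}\mathbf{1}_{|X_{j}|>t}]\gtrsim t^{p}$; restricting to the event that a single summand exceeds $t$ in modulus and decoupling the rest by independence gives $\|S\|_{p}^{p}\gtrsim t^{p}$. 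The intermediate shells are handled similarly with sub-exponential lower bounds, and letting $t\uparrow t_{0}$ completes this direction.

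\emph{Upper bound $\|S\|_{p}\lesssim t_{0}$, and the main obstacle.} This is the crux. For nonnegative summands it is almost immediate: $1+\tfrac1t\sum_{j}X_{j}\le\prod_{j}(1+X_{j}/t)$ and independence give $\mathbb{E}(1+S/t)^{p}\le\prod_{j}\mathbb{E}(1+X_{j}/t)^{p}$, which at $t=t_{0}$ is $\le e^{p}$. For centered variables this shortcut is blocked, because symmetrization is genuinely lossy here — one can construct summands (a rare one-sided large jump contributing almost nothing to the constraint) for which $\mathbb{E}|1+\varepsilon X_{j}/t|^{p}$ exceeds $\mathbb{E}|1+X_{j}/t|^{p}$ by a factor of order $C^{p}$, which is fatal once raised to the $N$-th power and rooted. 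So I would dissect each $X_{j}$ by scale — a Gaussian core $|X_{j}|\le t_{0}/p$, a family of $O(\log p)$ dyadic shells on $t_{0}/p<|X_{j}|\le t_{0}$, and a heavy tail $|X_{j}|>t_{0}$ — and use the defining identity $\sum_{j}\log\mathbb{E}|1+X_{j}/t_{0}|^{p}=p$ to certify that the core ($\sum_{j}\mathbb{E}[X_{j}^{2}\mathbf{1}_{|X_{j}|\le t_{0}/p}]\lesssim t_{0}^{2}/p$, hence $\lesssim t_{0}$ by a Bernstein moment bound), the dyadic shells (bundled together), and the tail ($\sum_{j}\mathbb{E}[|X_{j}|^{p}\mathbf{1}_{|X_{j}|>t_{0}}]\lesssim pt_{0}^{p}$, hence $\lesssim t_{0}$ by a one-big-jump bound) each contribute $O(t_{0})$; summing the pieces gives $\|S\|_{p}\lesssim t_{0}$. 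The real difficulty — and the reason this is a genuine theorem rather than a corollary of Rosenthal's inequality — is to run this multi-scale dissection with every constant arranged so that no factor of the form $C^{p}$ is lost per summand (and so that the $O(\log p)$ shells are bundled, not bounded one at a time), since any such loss is amplified to $C^{pN}$ under the product and overwhelms the estimate. I expect essentially the entire argument to go into that bookkeeping; in the paper we simply cite \cite{latala1997estimation}.
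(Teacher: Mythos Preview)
Your proposal is correct: the paper does not prove Theorem~\ref{Latala} at all but simply quotes it from \L ata\l a \cite[Thm.~2, Cor.~2, Rmk.~2]{latala1997estimation} as a black box, exactly as you note in your first and last sentences. Your accompanying sketch of how a proof would go (multi-scale dissection, the asymmetry obstruction to naive symmetrization, the bookkeeping to avoid $C^{p}$ losses) is a reasonable outline of \L ata\l a's original argument, but none of it appears in the present paper.
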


\subsection{Small Ball Estimates for Sums of iid Random Variables with Bounded Density}
We will have occasion to use the following standard result (e.g. Lemma 2.2 in \cite{rudelson2008littlewood}). 
    \begin{theorem}\label{T:iid-small-ball}
Let $\zeta_k,\, k=1,\ldots, n$ be iid non-negative random variables and suppose there exists $K$ such that $\mathbb P\lr{\zeta_k < \eps}\leq K \eps$ for all $\eps\geq 0$, then for all $\eps \geq 0$
    \[
    \mathbb P\lr{\sum_{k=1}^n \zeta_k^2 < n\eps^2}\leq (CK\eps)^n,
    \]
    where $C$ is an absolute constant.
\end{theorem}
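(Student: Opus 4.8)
The plan is to use the exponential Chebyshev (Laplace transform) method together with independence to reduce the claim to a one–dimensional estimate. Fix $\varepsilon>0$. We may assume $CK\varepsilon<1$, since otherwise the asserted bound is $\geq 1$ and there is nothing to prove. For any $\lambda>0$, applying Markov's inequality to the nonnegative random variable $\exp\bigl(-\lambda\varepsilon^{-2}\sum_{k=1}^n\zeta_k^2\bigr)$ and using that the $\zeta_k$ are iid gives
\[
\mathbb P\Bigl(\sum_{k=1}^n \zeta_k^2 < n\varepsilon^2\Bigr)
\;\leq\; e^{\lambda n}\,\mathbb E\Bigl[\exp\Bigl(-\lambda\varepsilon^{-2}\sum_{k=1}^n\zeta_k^2\Bigr)\Bigr]
\;=\; e^{\lambda n}\Bigl(\mathbb E\bigl[e^{-\lambda\zeta_1^2/\varepsilon^2}\bigr]\Bigr)^{n}.
\]

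Next I would bound the single–variable Laplace transform $\mathbb E\bigl[e^{-\lambda\zeta_1^2/\varepsilon^2}\bigr]$ using the small–ball hypothesis, which is exactly what controls $\mathbb P(\zeta_1<r)$ uniformly in $r$. By the layer–cake formula, writing the event $\{e^{-\lambda\zeta_1^2/\varepsilon^2}>t\}$ as $\{\zeta_1 < \tfrac{\varepsilon}{\sqrt{\lambda}}\sqrt{\log(1/t)}\}$,
\[
\mathbb E\bigl[e^{-\lambda\zeta_1^2/\varepsilon^2}\bigr]
= \int_0^1 \mathbb P\Bigl(\zeta_1 < \tfrac{\varepsilon}{\sqrt{\lambda}}\sqrt{\log(1/t)}\Bigr)\,dt
\;\leq\; \frac{K\varepsilon}{\sqrt{\lambda}}\int_0^1 \sqrt{\log(1/t)}\,dt.
\]
The substitution $u=\log(1/t)$ turns the last integral into $\int_0^\infty \sqrt{u}\,e^{-u}\,du = \Gamma(3/2)=\tfrac{\sqrt{\pi}}{2}$, so $\mathbb E\bigl[e^{-\lambda\zeta_1^2/\varepsilon^2}\bigr]\leq \tfrac{\sqrt{\pi}}{2}\,K\varepsilon\,\lambda^{-1/2}$.

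Finally, combining the two displays and choosing $\lambda$: taking $\lambda=1$ already suffices and yields
\[
\mathbb P\Bigl(\sum_{k=1}^n \zeta_k^2 < n\varepsilon^2\Bigr) \;\leq\; \Bigl(\tfrac{e\sqrt{\pi}}{2}\,K\varepsilon\Bigr)^n,
\]
so the constant $C=e\sqrt{\pi}/2$ works (optimizing over $\lambda$ only improves the absolute constant, and does not affect the form of the bound). There is no genuine obstacle here: the argument is the standard tensorization of a one–dimensional small–ball estimate via the Laplace transform. The only points requiring minor care are the preliminary reduction to the regime $CK\varepsilon<1$ so that the conclusion is non-vacuous, and noting that the hypothesis $\mathbb P(\zeta_k<\varepsilon)\leq K\varepsilon$ is assumed for \emph{all} $\varepsilon\geq 0$, which is precisely what lets us bound $\mathbb P\bigl(\zeta_1<\tfrac{\varepsilon}{\sqrt{\lambda}}\sqrt{\log(1/t)}\bigr)$ uniformly over $t\in(0,1)$.
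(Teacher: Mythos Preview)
Your proof is correct; it is the standard tensorization-via-Laplace-transform argument. The paper itself does not prove this statement but simply cites it as a known result (Lemma~2.2 in \cite{rudelson2008littlewood}), and your argument is essentially the one given there.
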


\subsection{Quantitative Multivariate CLT}\label{S:MCLT}
One of our goals in this article is to measure the approximate normality for Lyapunov exponents of $X_{N,n}$ when $N\gg n$ (see Theorem \ref{T:normal}). As explained in the introduction, we will measure normality using a distance function that is a natural high-dimensional generalization of the usual Kolmogorov-Smirnov distance: 
\begin{equation}\label{E:dist-def}
d( X, Y ) := \sup_{ C \in {\cal{C}}_{k} } \left| \mathbb P ( X\in C ) -\mathbb P ( Y\in C) \right|,
\end{equation}
where $ {\cal{C}}_{k}$ is the collection of all convex subsets of $\mathbb R^{k}$. The distance function $d$ has three desirable properties. First, it is affine invariant in  sense that if $T$ is any invertible affine transformation and $A$ is any convex set, then $T^{-1}A$ is also convex and hence for any random variables $X,Y$ on the same probability space
\begin{align}
\label{E:affine-inv}d(TX,TY)&=d(X,Y).
\end{align}
The second desirable property of $d$ is that it is stable to small $\ell_2$ perturbations, as explained in Proposition \ref{Dist-lem} below. To state this result, we write
\[\mathcal N(\mu, \Sigma),\qquad \mu \in \R^k,\quad \Sigma \in \mathrm{Sym}_k^+\]
for a Gaussian with mean $\mu$ and invertible covariance $\Sigma$. 
\begin{proposition}\label{Dist-lem}
There exists $c>0$ with the following property.  Suppose that $X,Y$ are $\R^k$-valued random variables defined on the same probability space. For all $\mu\in \R^k$, invertible symmetric matrices $\Sigma\in \mathrm{Sym}_k^+$, and $\delta>0$ we have
\begin{equation}
\label{Dist-lemma-1}
d(X+Y, \mathcal N\lr{\mu, \Sigma})\leq 3 d(X,\mathcal N\lr{\mu, \Sigma})+c\delta \sqrt{\norm{\Sigma^{-1}}_{HS}}+2\mathbb P\lr{\norm{Y}_2>\delta}.
\end{equation}
\end{proposition}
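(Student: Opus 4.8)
The plan is to decompose the event $\{X+Y\in C\}$ for a convex set $C$ by conditioning on whether $\|Y\|_2$ is small or large, and then to absorb the small-$Y$ contribution by enlarging/shrinking $C$ slightly. Concretely, fix $C\in\mathcal C_k$ and let $C^\delta:=\{x: \mathrm{dist}(x,C)\leq\delta\}$ be its closed $\delta$-neighborhood and $C_\delta:=\{x: \mathrm{dist}(x,C^c)\geq \delta\}$ its $\delta$-interior; both are convex. On the event $\|Y\|_2\leq\delta$ one has the inclusions $\{X\in C_\delta\}\cap\{\|Y\|\leq\delta\}\subseteq \{X+Y\in C\}\subseteq \{X\in C^\delta\}\cup\{\|Y\|>\delta\}$. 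Hence
\begin{equation}
\label{E:dist-sandwich}
\mathbb P(X\in C_\delta)-\mathbb P(\|Y\|_2>\delta)\ \leq\ \mathbb P(X+Y\in C)\ \leq\ \mathbb P(X\in C^\delta)+\mathbb P(\|Y\|_2>\delta).
\end{equation}

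Next I would compare $\mathbb P(X\in C^\delta)$ and $\mathbb P(X\in C_\delta)$ to $\mathbb P(\mathcal N(\mu,\Sigma)\in C)$. Writing $G\sim\mathcal N(\mu,\Sigma)$ and using the definition of $d$, $\mathbb P(X\in C^\delta)\leq \mathbb P(G\in C^\delta)+d(X,G)$ and similarly $\mathbb P(X\in C_\delta)\geq \mathbb P(G\in C_\delta)-d(X,G)$. It remains to bound the Gaussian content of the shell $C^\delta\setminus C_\delta$, i.e. $\mathbb P(G\in C^\delta)-\mathbb P(G\in C_\delta)\leq \mathbb P\big(\mathrm{dist}(G,\partial C)\leq \delta\big)$. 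This is exactly where the Gaussian-content-of-boundaries-of-convex-sets estimate (Theorem \ref{T:Nazarov-Ball}, promised in \S\ref{S:MCLT}) enters: after the affine change of variables $z\mapsto \Sigma^{-1/2}(z-\mu)$, which sends $G$ to a standard Gaussian and $C$ to another convex set while scaling distances by at most $\|\Sigma^{-1/2}\|_{op}\leq \|\Sigma^{-1}\|_{HS}^{1/2}$, Nazarov's bound gives that the standard Gaussian content of a $\delta\|\Sigma^{-1/2}\|_{op}$-neighborhood of the boundary of a convex set is at most $c\,\delta\|\Sigma^{-1/2}\|_{op}\leq c\,\delta\sqrt{\|\Sigma^{-1}\|_{HS}}$. (One must check $C^\delta$ and $C_\delta$ map into the corresponding neighborhoods of the transformed $C$; this is routine since affine maps commute with taking metric neighborhoods up to the Lipschitz constant.)

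Combining \eqref{E:dist-sandwich} with these two comparisons yields, for every convex $C$,
\begin{equation}
\label{E:dist-final}
\big|\mathbb P(X+Y\in C)-\mathbb P(G\in C)\big|\ \leq\ d(X,G)+c\,\delta\sqrt{\|\Sigma^{-1}\|_{HS}}+2\,\mathbb P(\|Y\|_2>\delta);
\end{equation}
taking the supremum over $C\in\mathcal C_k$ gives the claim, in fact with the constant $1$ rather than $3$ in front of $d(X,\mathcal N(\mu,\Sigma))$, so the stated inequality follows a fortiori. I expect the only genuinely non-routine ingredient to be the Gaussian shell estimate, and the main thing to be careful about is tracking how the non-identity covariance $\Sigma$ distorts the $\delta$-neighborhoods under the whitening map $\Sigma^{-1/2}$ — in particular justifying the bound $\|\Sigma^{-1/2}\|_{op}\leq\sqrt{\|\Sigma^{-1}\|_{HS}}$ and confirming that the factor of $c$ produced by Theorem \ref{T:Nazarov-Ball} is dimension-free. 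Everything else (the set inclusions, the affine invariance \eqref{E:affine-inv}, passing to the supremum) is elementary bookkeeping.
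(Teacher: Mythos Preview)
Your sandwiching argument \eqref{E:dist-sandwich} and the direct comparison of $\mathbb P(X\in C^\delta)$, $\mathbb P(X\in C_\delta)$ to the Gaussian are correct and in fact cleaner than the paper's route via $d(X+Y,X)$; as you note, this yields the constant $1$ rather than $3$ in front of $d(X,\mathcal N(\mu,\Sigma))$.

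There is, however, a genuine gap in the way you obtain the Gaussian shell estimate. You propose to whiten by $z\mapsto\Sigma^{-1/2}(z-\mu)$ and then invoke Nazarov's bound for the \emph{standard} Gaussian. But Theorem~\ref{T:Nazarov-Ball} applied with $C=\mathrm{Id}$ gives $\Gamma(\mathrm{Id})\simeq\sqrt{\|\mathrm{Id}\|_{HS}}=k^{1/4}$, not a dimension-free constant. Consequently your whitening approach produces
\[
\mathbb P\bigl(G\in C^\delta\setminus C_\delta\bigr)\ \leq\ c\,\delta\,\|\Sigma^{-1/2}\|_{op}\cdot k^{1/4},
\]
and while $\|\Sigma^{-1/2}\|_{op}\leq\sqrt{\|\Sigma^{-1}\|_{HS}}$ is true, the extra $k^{1/4}$ does not get absorbed: writing $\lambda_1\geq\cdots\geq\lambda_k$ for the eigenvalues of $\Sigma^{-1}$, one has $\lambda_1^{1/2}k^{1/4}\geq(\sum_i\lambda_i^2)^{1/4}=\sqrt{\|\Sigma^{-1}\|_{HS}}$ with equality only when $\Sigma$ is a multiple of the identity. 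So in general your bound is off by a factor of up to $k^{1/4}$ from the stated (dimension-free) inequality.

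The fix is simply not to whiten. Theorem~\ref{T:Nazarov-Ball} is already stated for an arbitrary covariance: taking $C=\Sigma^{-1}$ (so that $C^{-1/2}Z_k$ has law $\mathcal N(0,\Sigma)$, and the shift by $\mu$ is harmless) gives directly $\Gamma(\Sigma^{-1})\leq c\sqrt{\|\Sigma^{-1}\|_{HS}}$ and hence, via Corollary~\ref{C:Naz},
\[
\mathbb P\bigl(G\in C^\delta\bigr)-\mathbb P\bigl(G\in C_\delta\bigr)\ \leq\ c\,\delta\sqrt{\|\Sigma^{-1}\|_{HS}}.
\]
With this replacement your argument goes through verbatim and yields the proposition (with the sharper constant $1$).
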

We prove Proposition \ref{Dist-lem} in \S \ref{S:dist-lem-pf} below. Before doing so, however, we state the third desirable property of the distance $d$, namely, that it measures convergence in the multivariate CLT. We follow the notation in Bentkus \cite{bentkus2005lyapunov} and define
\[S:= S_{N}= X_{1} + \cdots + X_{N},\]
where $ X_{1}, \ldots, X_{N}$ are independent random vectors in $ \mathbb R^{k}$ with common mean $ \mathbb E X_{j} = 0$. We set
\[ C := {\rm cov} (S) \] 
to be the covariance matrix of $S$, which we assume is invertible. With the definition
\begin{equation}
\label{Multi-CLT-0}
 \beta_{j} := \mathbb E \| C^{-\frac{1}{2}} X_{j} \|_{2}^{3} , \ \beta:= \sum_{j=1}^{N} \beta_{j},
 \end{equation}
 we have the following \cite{bentkus2005lyapunov}:

\begin{theorem}[Multivariate CLT with Rate] 
\label{MultiCLT-theo}
\noindent There exists an absolute constant $c>0$ such that 
\begin{equation}
\label{Multi-CLT-1}
d( S, C^{\frac{1}{2}}Z ) \leq c k^{\frac{1}{4}} \beta , 
\end{equation}
where $Z\sim \mathcal N(0,\mathrm{Id}_k)$ denotes a standard Gaussian on $\R^k.$
\end{theorem}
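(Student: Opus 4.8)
Since Theorem~\ref{MultiCLT-theo} is a restatement of the main result of Bentkus \cite{bentkus2005lyapunov}, the plan is to cite it directly; for orientation, here is the structure of the argument and the two ingredients that drive it.

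The plan is to first use the affine invariance \eqref{E:affine-inv} of $d$ to replace each $X_j$ by $C^{-1/2}X_j$, reducing to the case $C=\mathrm{cov}(S)=\mathrm{Id}_k$, so that $\beta=\sum_{j=1}^N\E{\|X_j\|_2^3}$ and the goal becomes $d(S,Z)\le c\,k^{1/4}\beta$; one may also assume $\beta$ is below a fixed constant, since otherwise $d\le 1$ makes the estimate trivial. Next, for a convex body $A\subseteq\R^k$ and a scale $\eps>0$, I would introduce a smoothed indicator $f=f_{A,\eps}$ with $\mathbf 1_A\le f\le\mathbf 1_{A^\eps}$ (here $A^\eps$ is the $\eps$-neighborhood of $A$) whose Gaussian-weighted derivatives obey $\int\|D^m f\|\,d\gamma_k\lesssim \eps^{-(m-1)}k^{1/4}$; the factor $k^{1/4}$ is precisely the uniform bound on the Gaussian surface area of convex bodies, i.e.\ Theorem~\ref{T:Nazarov-Ball}, whose immediate consequence $\gamma_k(A^\eps\setminus A)\lesssim\eps\,k^{1/4}$ also controls the boundary error. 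Sandwiching $\mathbf 1_A$ between $f$ and the analogous function built from the outside reduces the problem to bounding $\sup_A|\E{f_{A,\eps}(S)}-\E{f_{A,\eps}(Z)}|$ at the cost of an additive $C\eps k^{1/4}$.

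To control that supremum I would run the Lindeberg replacement method: take independent centered Gaussians $Y_1,\dots,Y_N$, independent of the $X_j$, with $\mathrm{cov}(Y_j)=\mathrm{cov}(X_j)$ so that $\sum_j Y_j\stackrel{d}{=}Z$, telescope $\E{f(S)}-\E{f(Z)}$ over the hybrids $W_j+X_j$ versus $W_j+Y_j$ with $W_j=X_1+\dots+X_{j-1}+Y_{j+1}+\dots+Y_N$, and Taylor-expand each difference to third order about $W_j$. The zeroth-, first- and second-order contributions cancel because $X_j$ and $Y_j$ have matching mean and covariance; what remains is a sum of third-order remainders, bounded by $\E{\|X_j\|_2^3}+\E{\|Y_j\|_2^3}$ (the Gaussian term being $\le C(\E{\|X_j\|_2^2})^{3/2}\le C\beta_j$) against $\int\|D^3 f\|$ integrated over the law of $W_j$, which is Gaussian-like since $W_j$ is a sum of many small-covariance summands of total covariance $\approx\mathrm{Id}_k$. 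Optimizing $\eps$ at this crude level produces only $d(S,Z)\lesssim k^{1/4}\beta^{1/3}$, so the genuine work is upgrading the $\beta^{1/3}$ to the sharp $\beta$: in each third-order remainder one replaces the intermediate evaluation point by a true Gaussian $Z'$ at the cost of a term proportional to $d(S,Z)$ itself, rewrites the resulting fixed tensor $\E{D^3 f(Z')}$ via Gaussian integration by parts as $\E{f(Z')H_3(Z')}$ with $H_3$ a third Hermite tensor so that the bound $|f|\le 1$ keeps it dimension-efficiently small, and then feeds the estimate back, absorbing the $d(S,Z)$ term (legitimate since $\beta$ is small). This self-improving step yields $d(S,Z)\le C\,k^{1/4}\beta$. (Alternatively, one can obtain $|\E{h(S)}-\E{h(Z)}|\lesssim\beta\sup\|D^3 h\|$ for smooth $h$ by multivariate Stein's method and then combine with the same convex-set smoothing and Theorem~\ref{T:Nazarov-Ball}.)

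The single hardest point is exactly this: a one-pass Lindeberg-plus-smoothing argument overshoots, and recovering simultaneously the linear dependence on the Lyapunov ratio $\beta$ and the very small dimensional factor $k^{1/4}$ requires, respectively, the bootstrap treatment of the third-order remainders and the sharp $O(k^{1/4})$ Ball--Nazarov bound on the Gaussian surface area of convex sets (Theorem~\ref{T:Nazarov-Ball}); the rest is routine.
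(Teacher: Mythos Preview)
Your proposal is correct and matches the paper's treatment exactly: the paper does not prove Theorem~\ref{MultiCLT-theo} at all but simply cites it as the main result of Bentkus \cite{bentkus2005lyapunov}. Your added sketch of Bentkus's argument (affine reduction, smoothing via Theorem~\ref{T:Nazarov-Ball}, Lindeberg replacement, and the bootstrap to upgrade $\beta^{1/3}$ to $\beta$) is accurate and goes well beyond what the paper itself provides.
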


\subsection{Proof of Proposition \ref{Dist-lem}}\label{S:dist-lem-pf}
We rely on the following result (Theorem \ref{T:Nazarov-Ball}) of Nazarov \cite{nazarov2003maximal}, which was an extension of a Theorem of Ball \cite{ball1993reverse}. \\

\noindent Let $ Z_{k}$ be the standard Gaussian in $ \mathbb R^{k}$. Let $ C$ be a positive definite symmetric matrix and let $ \gamma_{C}$ be the density of the $C^{-\frac{1}{2}}Z_{k}$, i.e. 
$$ d\gamma_{C} ( y ) := \frac{\sqrt{|\det C|}}{ (2\pi)^{\frac{k}{2}}} e^{ - \frac{ \langle C y, y\rangle   }{2}} d y . $$
Let 
\begin{equation}
\label{def-K-e}
 K_{\epsilon} := \{ x\in \mathbb R^{k} : \exists y \in K , \| x-y\|_{2} <\epsilon\}  \ {\rm and } \ K_{-\epsilon} : \{ x\in K := x+ \epsilon B_{2}^{k}  \subseteq K\} 
 \end{equation} We define 
\begin{equation}
\label{Naz-1}
\Gamma(C ) := \sup_{K\in {\cal{C}}} \left\{ \lim_{\epsilon\rightarrow 0_+ }\frac{ \gamma_{C} \left( K_{\epsilon} \setminus K\right) }{ \epsilon} \right\}. 
\end{equation}
Our proof of Proposition \ref{Dist-lem} crucially relies on the following result of Nazarov.
\begin{theorem}[\cite{nazarov2003maximal}]\label{T:Nazarov-Ball}
\noindent There exists absolute constants $ c_{1}, c_{2}>0$  such that 
\begin{equation}
\label{Naz-1}
c_{1} \sqrt{\| C\|_{HS} } \leq \Gamma(C) \leq c_{2} \sqrt{\| C\|_{HS} }.
\end{equation}
\end{theorem}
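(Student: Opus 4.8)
This is Nazarov's estimate for the maximal Gaussian perimeter of convex sets, extending Ball's bound in the isotropic case; the plan is to reproduce that argument. First I would reduce $\Gamma(C)$ to a surface integral: because for a convex body $K$ with $C^1$ boundary the set $K_\epsilon\setminus K$ is the outer normal tube of thickness $\epsilon$ over $\partial K$, we have
\[\lim_{\epsilon\to 0_+}\frac{\gamma_C(K_\epsilon\setminus K)}{\epsilon}=\int_{\partial K}\varphi_C\,d\mathcal H^{k-1}=:\mathcal P_{\gamma_C}(K),\]
where $\varphi_C(y)=\frac{\sqrt{\det C}}{(2\pi)^{k/2}}e^{-\inprod{Cy}{y}/2}$ is the density of $\gamma_C$ and $\mathcal H^{k-1}$ is $(k-1)$-dimensional Hausdorff measure; for general convex $K$ the same limit exists and equals the lower Minkowski content of $\partial K$. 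A routine approximation then shows that $\Gamma(C)=\sup_{K\in\mathcal C}\mathcal P_{\gamma_C}(K)$ is unchanged if we restrict to smooth $K$, and — for the upper bound — to polytopes $K=\bigcap_{i=1}^m\set{x:\inprod{x}{u_i}\le 1}$ with $0$ in the interior (rescale each functional so its right-hand side is $1$); translating $\gamma_C$ only reparametrizes $\mathcal C$, so we may keep $\gamma_C$ centered.

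For the upper bound $\Gamma(C)\le c_2\sqrt{\norm{C}_{HS}}$ the main tool is the divergence theorem on $K$ applied to $V(x)=x\,\varphi_C(x)$: since $\nabla\varphi_C=-Cx\,\varphi_C$, one has $\operatorname{div}V=(k-\inprod{Cx}{x})\varphi_C$, hence
\[\int_{\partial K}\inprod{x}{\nu(x)}\,\varphi_C(x)\,d\mathcal H^{k-1}(x)=\int_K\bigl(k-\inprod{Cx}{x}\bigr)\varphi_C(x)\,dx\le k,\]
with $\nu$ the outer unit normal. On the $i$-th facet $\inprod{x}{\nu}\equiv a_i:=1/\norm{u_i}$, the distance from $0$ to the carrying hyperplane, so the identity controls the facets far from the origin: those with $a_i\ge\tau$ contribute at most $k/\tau$ to $\mathcal P_{\gamma_C}(K)$. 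The remaining facets lie in the ball $B_\tau$, and applying convex surface-area monotonicity to $K\cap B_\tau\subseteq B_\tau$ bounds that part of $\partial K$ by $\mathcal H^{k-1}(\partial B_\tau)$, so its Gaussian-weighted contribution is at most $\mathcal H^{k-1}(\partial B_\tau)\sup_{B_\tau}\varphi_C$, which is exponentially small once $\tau$ is a small multiple of the concentration scale of $\gamma_C$; optimizing $\tau$ yields $\mathcal P_{\gamma_C}(K)\lesssim\norm{C}_{HS}$ in the isotropic case. The improvement of the exponent on $\norm{C}_{HS}$ from $1$ to $1/2$ is Ball's refinement: one additionally extracts second-order information — via a quadratic divergence identity, for instance $V(x)=\inprod{Cx}{x}\,x\,\varphi_C(x)$ whose divergence has vanishing $\gamma_C$-integral over $\R^k$, together with the near-flatness of $\partial K$ at the critical scale — so that summing the facet contributions over dyadic scales costs only a Cauchy--Schwarz factor rather than a factor equal to the number of scales. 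Nazarov's contribution is to carry this out with level sets of $\inprod{Cx}{x}$ replacing Euclidean balls throughout, so that the dimension $k=\operatorname{tr}(C\,C^{-1})$ is accompanied by the $C$-weighted fluctuation $\operatorname{tr}\bigl((C\,C^{-1})^2\bigr)$, producing the Hilbert--Schmidt norm.

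For the matching lower bound $\Gamma(C)\ge c_1\sqrt{\norm{C}_{HS}}$ one must exhibit a single convex body $K_0=K_0(C)$ with $\mathcal P_{\gamma_C}(K_0)\gtrsim\sqrt{\norm{C}_{HS}}$. Diagonalizing $C$ (orthogonal maps preserve $\gamma_C$, convexity and $\norm{C}_{HS}$) turns $\gamma_C$ into $\bigotimes_i\mathcal N(0,\lambda_i^{-1})$ with $\norm{C}_{HS}=\bigl(\sum_i\lambda_i^2\bigr)^{1/2}$, and Nazarov's $K_0$ is a polytope (or a perturbed ellipsoidal body) tuned to this product structure: its boundary is deliberately non-spherical, so that it carries far more Hausdorff surface area than the concentration ellipsoid through its typical points, while remaining confined to the region where $\inprod{Cx}{x}$ is of order $k$, making $\int_{\partial K_0}\varphi_C\,d\mathcal H^{k-1}$ comparable to $\sqrt{\norm{C}_{HS}}$. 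Designing $K_0$ and estimating this integral from below is the most delicate step in the whole argument and — together with Ball's exponent-halving above — is where the real work lies; the reductions and divergence identities are the easy part. Finally, I note that the uses of this theorem in the present paper (the proof of Proposition~\ref{Dist-lem}) require only the upper bound, since there one needs merely $\gamma_C(K_\epsilon\setminus K)\le\epsilon\,\Gamma(C)\le c_2\epsilon\sqrt{\norm{C}_{HS}}$ uniformly over all convex $K$.
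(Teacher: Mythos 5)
The paper does not prove this statement at all: Theorem \ref{T:Nazarov-Ball} is quoted verbatim from Nazarov \cite{nazarov2003maximal} (extending Ball \cite{ball1993reverse}), and is used downstream only through its upper bound, in Corollary \ref{C:Naz} and Proposition \ref{Dist-lem} — a point you correctly note at the end. So there is no internal argument to compare yours against; the only question is whether your write-up actually constitutes a proof of Nazarov's theorem. It does not. Your reductions (perimeter as a surface integral, restriction to smooth bodies or polytopes) and the divergence identity for $V(x)=x\,\varphi_C(x)$, with $\operatorname{div}V=(k-\langle Cx,x\rangle)\varphi_C$, are correct and do yield the weak bound $\Gamma(C)\lesssim \|C\|_{HS}$ in the isotropic case, as you say. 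But the two steps that carry all the difficulty are left as descriptions rather than arguments: (i) the passage from exponent $1$ to exponent $1/2$ on $\|C\|_{HS}$ (Ball's refinement) is summarized as ``a quadratic divergence identity \ldots together with near-flatness \ldots costs only a Cauchy--Schwarz factor,'' with no identity actually exploited and no estimate of facet contributions across scales; and (ii) the lower bound is reduced to the existence of a body $K_0$ ``tuned to the product structure'' whose construction and perimeter estimate — which you yourself call the most delicate step — are never given. A sketch that defers exactly the hard steps is an outline of Nazarov's paper, not a proof of the theorem.

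Two smaller points. The heuristic sentence invoking $k=\operatorname{tr}(C\,C^{-1})$ versus $\operatorname{tr}\bigl((C\,C^{-1})^2\bigr)$ is vacuous as written (both traces equal $k$), so it does not identify the mechanism by which the Hilbert--Schmidt norm of $C$ enters for anisotropic covariance; if you intend to handle general $C$ rather than reduce to the standard Gaussian, you need to say precisely which $C$-dependent quantity replaces the dimension in Ball's argument, since a linear change of variables distorts surface measure and does not transport the isotropic bound directly. Finally, for the purposes of this paper the efficient course would have been to do what the authors do — cite \cite{nazarov2003maximal} — or, if a self-contained argument is wanted, to prove only the upper bound, which is all that Proposition \ref{Dist-lem} requires; the lower bound, the hardest part of your outline, is never used.
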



\noindent We will need an elementary corollary of \eqref{Naz-1}.
\begin{corollary}\label{C:Naz}
For any $\eps\in (0,1)$ and $K\in \mathcal D_k$, we have
\begin{equation}
\label{Naz-2}
\left| \mathbb P ( C^{-\frac{1}{2}} Z_{k} \in K_{\epsilon} ) - \mathbb P ( C^{-\frac{1}{2}} Z_{k} \in K_{ -\epsilon} ) \right|  \leq c_{2} \epsilon \sqrt{\| C\|_{HS} }.\\
\end{equation}
\end{corollary}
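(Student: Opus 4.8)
The plan is to realise the left-hand side of \eqref{Naz-2} as the $\gamma_C$-mass of a two-sided $\epsilon$-collar of $\partial K$ and to bound it through the Gaussian perimeter estimate of Theorem~\ref{T:Nazarov-Ball}. Since $K$ is convex we have $K_{-\epsilon}\subseteq K\subseteq K_\epsilon$, and $\partial K$ is Lebesgue-null, hence $\gamma_C$-null; therefore
\[
\mathbb P\!\left(C^{-1/2}Z_k\in K_\epsilon\right)-\mathbb P\!\left(C^{-1/2}Z_k\in K_{-\epsilon}\right)=\gamma_C\!\left(K_\epsilon\setminus K\right)+\gamma_C\!\left(K\setminus K_{-\epsilon}\right),
\]
and it suffices to bound each of the two terms by $\epsilon\,\Gamma(C)$: the corollary then follows from \eqref{Naz-1}, absorbing the harmless factor $2$ into $c_2$.

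For the outer term I would use that $g(t):=\gamma_C(K_t)$ is, for $t>0$, differentiable with $g'(t)=\int_{\partial K_t}\rho_C\,d\mathcal H^{k-1}$, where $\rho_C$ denotes the density of $\gamma_C$; this is because the fattened convex body $K_t$ has a $C^{1,1}$ boundary, so its outer Minkowski content agrees with the perimeter integral. Since $\partial K_t$ bounds a convex set, this quantity is exactly of the form over which the supremum in \eqref{Naz-1} is taken, so $g'(t)\le\Gamma(C)$. As $g$ is right-continuous at $0$ (again because $\gamma_C(\partial K)=0$), we get $\gamma_C(K_\epsilon\setminus K)=g(\epsilon)-g(0)=\int_0^\epsilon g'(t)\,dt\le\epsilon\,\Gamma(C)$.

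The inner term is the delicate point, and I expect it to be the main obstacle. The idea is the same, but $K_{-t}$ is convex without being $C^{1,1}$ (its boundary can have corners), so the argument must be made with a little more care. I would use the semigroup identity $(K_{-t})_{-\delta}=K_{-(t+\delta)}$ for inner parallel bodies (immediate from $\delta B_2^k+tB_2^k=(t+\delta)B_2^k$) together with the fact that for a convex body $L$ the inner $\delta$-neighbourhood $L\setminus L_{-\delta}$ of $\partial L$ is never larger than the outer one --- corners only shrink it --- so that $\limsup_{\delta\to0^+}\delta^{-1}\gamma_C(L\setminus L_{-\delta})$ is at most the Gaussian perimeter of $L$, which is $\le\Gamma(C)$ for every convex $L$ by \eqref{Naz-1}. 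Consequently $h(t):=\gamma_C(K_{-t})$ is monotone and locally Lipschitz on $[0,r)$, where $r$ is the inradius of $K$ (its increments being controlled by the uniformly bounded Gaussian perimeters of the bodies $K_{-t}$, $t<r$), and $\gamma_C(K\setminus K_{-t})=h(0)-h(t)=\int_0^t(-h'(s))\,ds\le t\,\Gamma(C)$ for all $t<r$; letting $t\uparrow r$ also covers the degenerate case $\epsilon\ge r$, in which $K_{-\epsilon}=\emptyset$ and $\gamma_C(K)\le r\,\Gamma(C)\le\epsilon\,\Gamma(C)$. The one nontrivial input to be pinned down is thus the (standard, for convex bodies) comparison of the inner Minkowski content of $K_{-t}$ with its perimeter, and the identification of that perimeter with the Gaussian Minkowski content that defines $\Gamma(C)$ in \eqref{Naz-1}.
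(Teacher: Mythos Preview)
Your approach is essentially the paper's: split into the outer layer $K_\epsilon\setminus K$ and the inner layer $K\setminus K_{-\epsilon}$, show the depth-parameter functions are absolutely continuous via the semigroup identities, and bound their a.e.\ derivatives by $\Gamma(C)$ using Theorem~\ref{T:Nazarov-Ball}. The only technical difference is how absolute continuity is obtained: the paper does not appeal to $C^{1,1}$ regularity of $\partial K_t$ or to Minkowski-content identities, but instead uses a crude volume bound on $\epsilon$-collars of convex boundaries (Kallenberg, Lemma~10.5) to get a Lipschitz constant depending on $K$ and $C$, and only afterwards sharpens the derivative bound to $\Gamma(C)$.

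One caution on your inner-term argument. The claim that ``the inner $\delta$-neighbourhood $L\setminus L_{-\delta}$ is never larger than the outer one'' is false for Gaussian measures: if $L$ is a large ball centred at the origin, the inner annulus carries strictly more $\gamma_C$-mass than the outer one. So your route to the local Lipschitz property of $h$ via ``increments controlled by the uniformly bounded Gaussian perimeters'' does not work as stated --- bounding the $\limsup$ of $\delta^{-1}\gamma_C(L\setminus L_{-\delta})$ by $\Gamma(C)$ at each $t$ is not by itself enough to integrate, since monotonicity alone only gives the inequality in the wrong direction. The fix is exactly the paper's: first establish absolute continuity of $h$ from a rough bound such as $\gamma_C(L\setminus L_{-\delta})\le\gamma_C((\partial L)_\delta)\le c(K,C)\,\delta$, and then invoke Nazarov to bound $-h'(t)\le\Gamma(C)$ almost everywhere. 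With that adjustment your argument is correct and coincides with the paper's.
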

\begin{proof}
Our argument is along the lines of the proof in \cite{bentkus2005lyapunov} of equations (1.3), (1.4) or \cite{bentkus2003dependence} equation (1.2), which obtain similar statements in the special case where $C=\mathrm{Id}$. We will use a standard estimate (see e.g. Lemma 10.5 in \cite{kallenberg2002foundations}) that if $K$ is any convex set in $\R^k$ and $(\partial K)_\epsilon$ is the $\epsilon$-neighborhood of the boundary of $K$, then 
\begin{equation}\label{E:Kal}
    \mathrm{vol}((\partial K)_\epsilon) \leq 2\left[\lr{1+\frac{\epsilon}{r(K)}}^k-1\right]\mathrm{vol}(B_{r(K)}),
\end{equation}
where $r(K)$ is the radius of the smallest ball containing $K$, $\mathrm{vol}$ is the Euclidean volume, and $B_{r}$ is a ball of radius $r$. The crucial point is that the right hand side is bounded for all $\epsilon\in (0,1)$ by $c\epsilon$ with $c$ depending only on $r(K)$ and the ambient dimension $k$.

To derive from this estimate \eqref{Naz-2}, observe that for every $ \epsilon>0$, if $ K\in {\cal{C}}_{k}$ then also $ K_{\epsilon } $ and  $K_{-\epsilon}$ are in $ {\cal{C}}_{k}$. Note also that the difference of probabilities on the left-hand side of \eqref{Naz-2} can be bounded above by  $ \mathbb P (W\in K_{\epsilon} \setminus K )+ \mathbb P (W\in K\setminus  K_{-\epsilon}  ) $ where $ W:= C^{-\frac{1}{2}} Z_{k} $. It is therefore enough to check that each of these probabilities is in turn bounded above by the right hand side of \eqref{Naz-2}. To see this for $\mathbb P (W\in K_{\epsilon} \setminus K )$, denote for $K$ convex and $t>0$,
\[
 \omega_{K}(t) :=  \mathbb P \left( W \in K_{t} \setminus K\right).
\]
Since  $ K_{t+ \epsilon} \setminus K = ( K_{t+ \epsilon} \setminus K_{t}) \cup (K_{t} \setminus K) $ we have that 
 \begin{equation}\label{E:semi-group}
     \omega_{K} (t+ \epsilon) - \omega_{K} (t) = \omega_{K_{t} } (\epsilon). 
 \end{equation}
This relation, together with \eqref{E:Kal}, implies that $\omega_K(t)$ is absolutely continuous. Indeed, we may write
\begin{align*}
    \omega_{K_t}(\epsilon) =  \mathbb P\lr{Z_k\in C^{1/2}K_{t+\epsilon}\backslash C^{1/2}K_t} \leq \frac{\mathrm{vol}\lr{C^{1/2}K_{t+\epsilon}\backslash C^{1/2} K_t}}{(2\pi)^{k/2}} .
\end{align*}
Denoting by $\lambda_{\text{max}}$ the maximal eigenvalue of $C^{1/2}$, we may write
\[
C^{1/2}K_{t+\epsilon} = C^{1/2}\lr{K_{t}}_\epsilon\subseteq (C^{1/2}K_{t})_{\lambda_{\text{max}}\epsilon}.
\]
Thus, 
\begin{align*}
    \omega_{K_t}(\epsilon) \leq \frac{\mathrm{vol}\lr{(C^{1/2}K_t)_{\lambda_{\text{max}}\epsilon}\backslash C^{1/2}K_t}}{(2\pi)^{k/2}} .
\end{align*}
Denoting by $R$ the radius of the smallest ball containing $(C^{1/2}K_1)_{\lambda_{\text{max}}}$, we obtain from \eqref{E:Kal} that there exists a constant $c>0$ depending on $C$, $k$ and $R$ so that for any $0\leq t,\epsilon \leq 1$
\[
\omega_{K_t}(\epsilon) \leq c\epsilon.
\]
Thus, using \eqref{E:semi-group}, we indeed see that $\omega_K(t)$ is absolutely continuous on $[0,1]$. Hence, its derivative $\omega_K'(t)$ exists almost everywhere and
\[
\omega_K(t)=\int_0^t \omega_K'(s)ds.
\]
Moreover, combining \eqref{E:semi-group} with \eqref{Naz-1}, we find for any $t\in (0,1)$ that
\[
0\leq \omega_K'(t)= \lim_{\epsilon\gives 0_+}\frac{\omega_{K} (t+ \epsilon) - \omega_{K} (t) }{\epsilon} = \lim_{\epsilon\gives 0_+}\frac{\omega_{K_t}(\epsilon)}{\epsilon}\leq c\sqrt{||C||
_{HS}}
\]
Hence, since $\omega_K(0)=0$, we find that $ \omega_{K} (t) \leq  ct\sqrt{\| C\|_{HS}}  $, for all $t\leq 1$. Using a similar argument for $\bar{\omega}_{K}(t) := \mathbb P (W\in K\setminus  K_{- t}  )$, we conclude that \eqref{Naz-2} indeed holds. 

\end{proof}


We are now ready to prove Proposition \ref{Dist-lem}. Note that if $S,T$ are any events on the same probability space, then
\[\abs{\mathbb P\lr{S}-\mathbb P\lr{T}}\leq \mathbb P\lr{S\Delta T},\]
where $S\Delta T$ is the symmetric difference. For any convex set $A\subseteq \R^k$, we have
\[\abs{\mathbb P\lr{X+Y\in A}- \mathbb P\lr{X\in A}}\]
is bounded above by 
\[\abs{\mathbb P\lr{X+Y\in A,\, \norm{Y}_2\leq \delta}- \mathbb P\lr{X\in A,\, \norm{Y}_2\leq \delta}}+2\mathbb P\lr{\norm{Y}_2>\delta}.\]
Note that 
\[\set{X+Y\in A,\, \norm{Y}_2\leq \delta}\Delta\set{X\in A,\, \norm{Y}_2\leq \delta}~\subseteq~\{ X\in A_{\delta}\setminus A_{-\delta}\}.\]
Thus, we find
\begin{align*}
    \abs{\mathbb P\lr{X+Y\in A}- \mathbb P\lr{X\in A}}&\leq \mathbb P\lr{X\in   A_\delta} -  \mathbb P\lr{X\in   A_{-\delta}} + 2 \mathbb P\lr{\norm{Y}_2>\delta}\\
    &\leq
 |  \mathbb P\lr{X\in   A_\delta} -  \mathbb P\lr{ \mathcal N\lr{\mu, \Sigma}\in   A_\delta} |\\
 &+ |  \mathbb P\lr{X\in   A_{-\delta}}  - \mathbb P\lr{ \mathcal N\lr{\mu, \Sigma}\in   A_{-\delta}} |\\
 &+  \mathbb P\lr{ \mathcal N\lr{\mu, \Sigma}\in   A_\delta} -  \mathbb P\lr{ \mathcal N\lr{\mu, \Sigma}\in   A_{-\delta}}  + 2 \mathbb P\lr{\norm{Y}_2>\delta}\\
 &\leq 
 2d(X, \mathcal N\lr{\mu, \Sigma})  + c_{0}\delta \sqrt{\norm{\Sigma^{-1}}_{HS}} + 2\mathbb P\lr{\norm{Y}_2>\delta}
\end{align*}
where we have used \eqref{Naz-2}. 
Putting this all together, we find
\begin{align*}
    d(X+Y, \mathcal N\lr{\mu, \Sigma}) &\leq d(X, \mathcal N\lr{\mu, \Sigma}) + d(X+Y, X)\\
    &\leq 3d(X, \mathcal N\lr{\mu, \Sigma}) + c_{0}\delta \sqrt{\norm{\Sigma^{-1}}_{HS}} + 2\mathbb P\lr{\norm{Y}_2>\delta}.
\end{align*} \hfill $\square$

\section{Roadmap for Proofs of Theorems \ref{T:global} and \ref{T:normal}}\label{S:roadmap}
In this section, we explain the organization of the proofs of Theorems \ref{T:global} and \ref{T:normal}.
Our starting point is in \S \ref{S:small-ball-use}. There, in  Proposition \ref{P:pointwise-small-ball} we explain how to provide surprisingly useful bounds on the size of the difference
\begin{equation}\label{E:local-global-diff}
    \frac{1}{N}\log \norm{X_{N,n}(\Theta)}-\lr{\lambda_1+\cdots + \lambda_k}
\end{equation}
using small ball estimates on determinants of volumes of random projections. This makes precise \eqref{E:pointwise-informal}. We remind the reader that $\lambda_1,\ldots,\lambda_k$ are the top $k$ Lyapunov exponents for $X_{N,n}$ and that 
\[X_{N,n}(\Theta) = X_{N,n}\theta_1\wedge\cdots X_{N,n}\theta_k,\]
where $\theta_j$ are is a fixed orthonormal $k-$frame in $\R^n.$ We think of $X_{N,n}(\Theta)$ as a pointwise analog of $\lambda_1+\cdots+ \lambda_k$ since by Lemma \ref{lem-Grass-3} the supremum over $\Theta$ of $\frac{1}{N}\log\norm{X_{N,n}(\Theta)}$ equals $\lambda_1+\cdots+ \lambda_k.$

Using Proposition \ref{P:pointwise-small-ball}, we analyze in \S \ref{S:concentration} the concentration properties of $\frac{1}{N}\log\norm{X_{N,n}(\Theta)}.$ By Lemma \ref{L:pointwise-iid} it is a sum of independent random variables, allowing us to apply Theorem \ref{Latala} several times. The main result is Proposition \ref{P:pointwise}, whose proof is the most technical part of this article. 

Combining these concentration estimates for $\frac{1}{N}\log\norm{X_{N,n}(\Theta)}$ with the bounds on \eqref{E:local-global-diff} derived in Proposition \ref{P:pointwise-small-ball}, we derive Theorem \ref{T:global} in \S \ref{S:global-pf}, giving quantitative estimates about convergence of the global distribution of singular values of $X_{N,n}$ to the Triangle Law. 

Finally, in \S \ref{S:normal-outline}, we combine Theorem \ref{T:sums} with Proposition \ref{P:pointwise} and the multivariate CLT (see \S \ref{S:MCLT}) to prove the approximate normality of Lyapunov exponents stated in Theorem \ref{T:normal}.

\section{Lyapunov Sums Via Small Ball Estimates}\label{S:small-ball-use}
The purpose of this section is to explain how to use small ball estimates on volumes of random projections to obtain  concentration estimates on the difference
\[\frac{1}{N}\log \norm{X_{N,n}(\Theta)}-\sup_{\Theta'\in \mathrm{Fr}_{n,k}}\frac{1}{N}\log \norm{X_{N,n}(\Theta')}=\frac{1}{N}\log \norm{X_{N,n}(\Theta)}-\lr{\lambda_1+\cdots + \lambda_k}\]
between the sum of the first $k$ Lyapunov exponents of $X_{N,n}$ and the ``pointwise'' analog of this quantity evaluated at any fixed orthonormal system $\Theta=\lr{\theta_1,\dots,\theta_k}$ of $k$ vectors in $\R^n.$ Our main result is the following
\begin{proposition}\label{P:pointwise-small-ball}
There exists $C>0$ with the following property. For any $\eps\in (0,1)$ and any $\Theta \in \mathrm{Fr}_{n,k}$ we have
 \[\mathbb P\lr{\abs{\frac{1}{N}\log \norm{X_{N,n}(\Theta)} -\sup_{\Theta' \in \mathrm{Fr}_{n,k}}\frac{1}{N}\log \norm{X_{N,n}(\Theta')}}\geq \frac{k}{2N}\log\lr{\frac{n}{k\eps^2}}}\leq \lr{C\eps}^{k/2}.\]
\end{proposition}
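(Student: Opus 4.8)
The plan is to derive Proposition \ref{P:pointwise-small-ball} from Proposition \ref{P:small-ball-bounds} by controlling the small-ball probability $\mathbb P(\norm{P_F(\Theta)}\le\eps_0)$ for the right choice of $\eps_0$. \emph{Reduction.} I would apply Proposition \ref{P:small-ball-bounds} with its $\eps$ replaced by
\[\eps_0:=\lr{\frac{k\eps^2}{n}}^{k/2}\in(0,1),\]
chosen exactly so that $\frac1N\log(1/\eps_0)=\frac{k}{2N}\log\lr{\frac{n}{k\eps^2}}$ (note $0<k\eps^2/n<1$ since $k\le n$ and $\eps<1$). After this substitution it suffices to prove the small-ball bound $\mathbb P(\norm{P_F(\Theta)}\le\eps_0)\le(C\eps)^{k/2}$, where $F$ is a Haar-distributed $k$-dimensional subspace of $\R^n$.

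\emph{Distributional identification.} By rotation invariance of Haar measure on the Grassmannian, together with Lemma \ref{L:haar-flags} and the generalized Gram identity \eqref{E:generalized-gram}, $\norm{P_F(\Theta)}$ has the same law as
\[\frac{\norm{P_{F_0}\xi_1\wedge\cdots\wedge P_{F_0}\xi_k}}{\norm{\xi_1\wedge\cdots\wedge\xi_k}},\]
where $F_0$ is a fixed coordinate $k$-plane and $\xi_1,\dots,\xi_k$ are iid standard Gaussian in $\R^n$. The numerator is the volume of the parallelepiped spanned by the iid standard Gaussians $P_{F_0}\xi_i\in F_0\cong\R^k$, hence equals $\abs{\det G}$ for a $k\times k$ standard Gaussian matrix $G$; by the projection formula \eqref{wedge-1.5} and Lemma \ref{L:polar}, $\abs{\det G}^2\stackrel{d}{=}\prod_{i=1}^k\chi^2_{k-i+1}$ and $\norm{\xi_1\wedge\cdots\wedge\xi_k}^2\stackrel{d}{=}\prod_{i=1}^k\chi^2_{n-i+1}$, each with independent factors. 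Equivalently, via the Bartlett decomposition (matrix-variate Beta),
\[\norm{P_F(\Theta)}^2\stackrel{d}{=}\prod_{i=1}^k\beta_i,\qquad \beta_i\sim\mathrm{Beta}\lr{\tfrac{k-i+1}{2},\tfrac{n-k}{2}}\ \text{independent}.\]
This is precisely a small-ball estimate for volumes of random projections, of the type already recorded as Proposition \ref{Gaussian-matrix-det-est}.

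\emph{Negative fractional moments and conclusion.} I would estimate $\mathbb P(\prod_i\beta_i\le\eps_0^2)$ by Markov's inequality applied to $(\prod_i\beta_i)^{-1/4}$. Since $\mathbb E[\beta^{-1/4}]=\frac{\Gamma(a-\frac14)\Gamma(a+b)}{\Gamma(a)\Gamma(a+b-\frac14)}$ for $\beta\sim\mathrm{Beta}(a,b)$ (finite because each first parameter $a=\frac{k-i+1}{2}\ge\frac12>\frac14$), the elementary bounds $\Gamma(x-\tfrac14)/\Gamma(x)\le C'x^{-1/4}$ and $\Gamma(y)/\Gamma(y-\tfrac14)\le C'y^{1/4}$ for $x,y\ge\frac12$ give $\mathbb E[\beta_i^{-1/4}]\le C''\lr{\frac{n-i+1}{k-i+1}}^{1/4}$, and the product telescopes:
\[\mathbb E\lr{\lr{\prod_{i=1}^k\beta_i}^{-1/4}}=\prod_{i=1}^k\mathbb E[\beta_i^{-1/4}]\le (C'')^k\binom{n}{k}^{1/4}.\]
Hence $\mathbb P(\prod_i\beta_i\le v)\le (C'')^k\binom nk^{1/4}v^{1/4}$, and plugging $v=\eps_0^2=(k\eps^2/n)^k$ together with $\binom nk\le(en/k)^k$ yields $(C'')^k(e\eps^2)^{k/4}=(C\eps)^{k/2}$ with $C=(C'')^2\sqrt e$ universal, finishing the proof. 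The boundary case $k=n$ is trivial: then $F=\R^n$, $\norm{P_F(\Theta)}=1$, and the inequality holds vacuously.

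\emph{Main obstacle.} The delicate step is the negative-moment computation (equivalently, the sharp form of Proposition \ref{Gaussian-matrix-det-est}): one must verify that the product of the $\mathbb E[\beta_i^{-1/4}]$ telescopes to exactly $\binom nk^{1/4}$, so that the typical scale $\binom nk^{-1/2}\asymp(k/en)^{k/2}$ of $\norm{P_F(\Theta)}$ is matched by the threshold $(k\eps^2/n)^{k/2}$ up to a universal $C^{k/2}$. The critical contribution comes from the factor $\beta_k\sim\mathrm{Beta}(\tfrac12,\tfrac{n-k}{2})$, which has a nonvanishing density at the origin; this is exactly what forces the per-slot power $\eps^{1/2}$ rather than $\eps$, and hence the exponent $k/2$ in the final bound $(C\eps)^{k/2}$.
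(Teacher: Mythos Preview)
Your proof is correct and follows the same overall architecture as the paper: both reduce via Lemma~\ref{L:small-ball-bounds} (Proposition~\ref{P:small-ball-bounds}) with the same substitution $\eps_0=(k\eps^2/n)^{k/2}$, then establish the requisite small-ball estimate for $\norm{P_F(\Theta)}$ by a negative-moment/Markov argument.

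The only difference is in how the small-ball step is packaged. The paper rewrites $\norm{P_F(\Theta)}$ as the determinant ratio $\bigl(\det(G_kG_k^\ast)/\det(GG^\ast)\bigr)^{1/2}$, invokes the separate positive and negative moment bounds of Proposition~\ref{Gaussian-matrix-det-est} (from \cite{paouris2013small}), and combines them in Corollary~\ref{C:small-ball} via a union-bound split over $\{\det(GG^\ast)^{1/2k}\text{ large}\}$ and $\{\det(G_kG_k^\ast)^{1/2k}\text{ small}\}$, optimizing over an auxiliary parameter. You instead use the exact Wilks-$\Lambda$ representation $\norm{P_F(\Theta)}^2\stackrel{d}{=}\prod_{i=1}^k\beta_i$ with independent $\beta_i\sim\mathrm{Beta}((k-i+1)/2,(n-k)/2)$ and apply Markov once to the $-1/4$ moment, which telescopes cleanly to $\binom{n}{k}^{1/4}$ up to a $C^k$ factor. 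Your route is somewhat more self-contained and avoids the splitting, at the cost of invoking the Bartlett/Wilks identity (which is standard but not derived in the paper); the paper's route is more modular, quoting an existing lemma. Both yield the same bound $(C\eps)^{k/2}$ for the same reason --- the critical factor $\beta_k\sim\mathrm{Beta}(1/2,\cdot)$ (equivalently the $\chi_1^2$ in the numerator product) is what limits the negative-moment exponent and produces the $k/2$.
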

\begin{proof}
The key observation is the following:
\begin{lemma}\label{L:small-ball-bounds}
For any $\eps\in (0,1)$ and any $\Theta \in \mathrm{Fr}_{n,k}$ we have
 \[\mathbb P\lr{\abs{\frac{1}{N}\log \norm{X_{N,n}(\Theta)} -\sup_{\Theta' \in \mathrm{Fr}_{n,k}}\frac{1}{N}\log \norm{X_{N,n}(\Theta')}}\geq \frac{1}{N}\log\lr{\frac{1}{\eps}}}\leq \mathbb P\lr{\norm{P_F(\Theta)}\leq \eps},\]
 where $F$ is a Haar distributed $k-$dimensional subspace of $\R^n$ and 
 \[P_F(\Theta)=P_F\theta_1\wedge \cdots\wedge P_F\theta_k\] 
 with $P_F$ denoting the orthogonal onto $F.$
\end{lemma}
\begin{proof}[Proof of Lemma \ref{L:small-ball-bounds}]
Denote by $v^{(1)},\ldots, v^{(n)}$ the right singular vectors of $X_{N,n}$ corresponding to its singular values $s_1\geq \cdots \geq s_n$. By abuse of notation, we will write $X_{N,n}:\Lambda^k\R^n\gives \Lambda^k\R^n$ for the linear transformation given by 
\[X_{N,n}(x_1\wedge\cdots\wedge x_k)=X_{N,n}x_1\wedge\cdots \wedge X_{N,n}x_k.\]
The right singular vectors of $X_{N,n}$ acting on $\Lambda^k\R^n$ are 
\[v^{(I)}:=v^{(i_1)}\wedge\cdots \wedge v^{(i_k)},\qquad I = \lr{i_1,\ldots, i_k},\, i_1<\cdots <i_k\]
and the corresponding singular values are
\[s_I:=\prod_{i\in I}s_i.\]
Hence, the Pythagorean theorem for wedge products and the generalized Gram identity (see Lemma \ref{lem-Grass-2}) yield
\begin{align*}
   \norm{X_{N,n}(\Theta)}^2 &= \sum_{\substack{I=\lr{i_1,\ldots,i_k}\\ 1\leq i_1<\cdots < i_k \leq n}} s_I^2 \inprod{v^{(I)}}{\Theta}^2\geq \lr{\prod_{i=1}^k s_i^2} \inprod{v^{(1,\ldots,k)}}{\Theta}^2 = \lr{\prod_{i=1}^k s_i^2}\norm{P_k(\Theta)}^2,
\end{align*}
where in the last equality we've denote by $P_k$ the orthogonal projection into the span of the top $k$ right singular vectors of $X_{N,n}$. We therefore obtain, using Lemma \ref{lem-Grass-3}: 
\begin{align*}
    0&\geq \frac{1}{N}\log \norm{X_{N,n}(\Theta)} -\sup_{\Theta' \in \mathrm{Fr}_{n,k}}\frac{1}{N}\log \norm{X_{N,n}\Theta'}\\
    &= \frac{1}{2N}\log\lr{\frac{\norm{X_{N,n}(\Theta)}^2}{\prod_{i=1}^k s_i^2}}\\
    &=\frac{1}{N}\log\norm{P_k(\Theta)}.
\end{align*}
Since $X_{N,n}$ is invariant under right multiplication by a Haar orthogonal matrix, we see that $P_k$ is equal in distribution to the orthogonal projection onto a Haar distributed $k$-dimensional subspace of $\R^n.$
\end{proof}

\noindent In order to apply Lemma \ref{L:small-ball-bounds} we need small ball estimates on $\norm{P_F(\Theta)}$. Gaussian analogs of such estimates are essentially available in the literature, but are phrased in the language of determinants of random matrices. To reduce to this case, note that if $F$ is Haar distributed among $k-$dimensional subspaces of $\R^n$, an orthonormal basis $v_1,\ldots,v_k$ for $F$ is Haar distributed on the space of such $k-$frames in $\R^n$. Thus, by \eqref{E:gaussian-frame} from Lemma \ref{L:haar-flags}, we find
 \begin{align}
\notag \norm{P_F(\Theta)} &= 
 \| P_{F}  \theta_{1} \wedge \cdots \wedge P_{F} \theta_{k} \|\\
 \notag &= \abs{\inprod{v_1\wedge\cdots\wedge v_k}{\theta_1\wedge\cdots\wedge \theta_k}}\\
 \notag &\stackrel{d}{=} \abs{\inprod{\frac{g_1\wedge \cdots\wedge g_k}{\norm{g_1\wedge \cdots \wedge g_k}}}{\theta_1\wedge\cdots\wedge \theta_k}}\\
 \notag&\stackrel{d}{=}  \frac{  \| G \theta_{1} \wedge \cdots \wedge G \theta_{k} \| }{  {\rm det} ( G G^{\ast} )^{\frac{1}{2}}  }\\
 \label{E:wedge-det}&=\lr{\frac{  \det(G_kG_k^*) }{  {\rm det} ( G G^{\ast} )  }}^{\frac{1}{2}}
 \end{align}
where $\stackrel{d}{=}$ denotes equality in distribution, $G$ is a $k\x n$ matrix with iid standard Gaussian columns $g_i$, $G_k$ is the obtained from $G$ by keeping only the first $k$ columns, and we have used the Gram identity \eqref{wedge-def} and \eqref{E:generalized-gram} in the last two lines. The relation \eqref{E:wedge-det}, combined in Lemma \ref{L:small-ball-bounds}, therefore gives that
 \[
 \mathbb P\lr{\abs{\frac{1}{N}\log \norm{X_{N,n}(\Theta)} -\sup_{\Theta' \in \mathrm{Fr}_{n,k}}\frac{1}{N}\log \norm{X_{N,n}(\Theta')}}\geq \frac{1}{N}\log\lr{\frac{1}{\eps}}}
 \]
 is bounded above by 
 \begin{equation}\label{E:small-ball-det}
   \mathbb P\lr{\lr{\frac{  \det(G_kG_k^*) }{  {\rm det} ( G G^{\ast} )  }}^{\frac{1}{2}}\leq \eps},   
 \end{equation}
To complete the proof, we recall the following result.
\begin{proposition}[Lemma 4.2 in \cite{paouris2013small}]
\label{Gaussian-matrix-det-est}
\noindent There exist universal constants $c,C>0$ with the following property. Let $ G$ be a $k\times n $ matrix with iid standard $\mathcal N(0,1)$ Gaussian entries. Then 
\begin{equation}
\label{G-M-D-E-1}
\left( \mathbb E \left[{\det} ( G G^{\ast} )^{\frac{p}{2k}}\right]\right)^{\frac{1}{p}} \leq C \sqrt{n} , \ 0< p \leq k n .
\end{equation}
and 
\begin{equation}
\label{G-M-D-E-2}
\left( \mathbb E \left[{\det} ( G G^{\ast} )^{-\frac{p}{2k}}\right]\right)^{-\frac{1}{p}} \geq c \sqrt{n} , \ 0< p \leq k (n-k+1- e^{-k(n-k+1)}). 
\end{equation}
\end{proposition}

\noindent This allows use to estimate the probability in \eqref{E:small-ball-det} via the following corollary, which in combination with $\eqref{E:small-ball-det}$ and Lemma \ref{L:small-ball-bounds} completes the proof of Proposition \ref{P:pointwise-small-ball}.

\begin{corollary}\label{C:small-ball}
\noindent There exists a universal constant $c>0$ with the following property. Let $G$ be a $k\x n$ matrix with iid Standard Gaussian entries, and denote by $G_k$ the matrix obtained from $G$ by keeping only the first $k$ columns. 
\begin{equation}
\label{invar-cor-2}
\mathbb P\left(  \lr{\frac{\det(G_kG_k^*)}{\det(GG^*)}}^{\frac{1}{2k}}  \leq \varepsilon \sqrt{ \frac{k}{n}}\right) \leq \left(  c \varepsilon \right)^{\frac{k}{2}}   , \varepsilon >0 
 \end{equation}
\end{corollary}
\begin{proof}
Using \eqref{G-M-D-E-1} and \eqref{G-M-D-E-2} and Markov's inequality, we have that for $t\geq 1$, 
$$ \mathbb P \left(  {\det} ( G G^{\ast} )^{\frac{1}{2k}} \geq t C \sqrt{n} \right) \leq \frac{1}{ t^{nk}} \ {\rm and } \  \mathbb P \left(  {\det} ( G G^{\ast} )^{\frac{1}{2k}} \leq \varepsilon c  \sqrt{n} \right) \leq (c \varepsilon)^{k(n-k+1-e^{-k(n-k+1)}) }  .$$
Note that $ G_{k}$ has the same distribution as a $k\times k$ matrix with iid standard $\mathcal N(0,1)$ Gaussian entries. So, we have that 
\begin{align*}
   \mathbb P\left(  \lr{\frac{\det(G_kG_k^*)}{\det(GG^*)}}^{\frac{1}{2k}}    \leq \varepsilon \sqrt{\frac{k}{n}}  \right) &\leq  \mathbb P \left( \frac{ {\rm det} ( G_{k} G_{k}^{\ast} )^{\frac{1}{2k}}  }{  {\rm det} ( G G^{\ast} )^{\frac{1}{2k}}  }  \leq \varepsilon \sqrt{\frac{k}{n}} \ {\rm and }   \ {\rm det} ( G G^{\ast} )^{\frac{1}{2k}}  \leq t C\sqrt{n}  \right)\\ &+ \mathbb P \left(   {\rm det} ( G G^{\ast} )^{\frac{1}{2k}}  \geq t C\sqrt{n} \right) \\
    &\leq \mathbb P \left(  {\rm det} ( G_{k} G_{k}^{\ast} )^{\frac{1}{2k}}   \leq \varepsilon t C\sqrt{k}  \right)  + \mathbb P \left(   {\rm det} ( G G^{\ast} )^{\frac{1}{2k}}  \geq t C\sqrt{n} \right)\\
    &\leq (c^{\prime}t\varepsilon)^{k(1-e^{-k}) } +  \frac{1}{ t^{kn}} \leq \left(  c \varepsilon \right)^{k/2},
\end{align*}
where in the last line we've taken $ t=  \varepsilon^{-1/2n}$. 
\end{proof}

\end{proof}


\section{Concentration for $\frac{1}{N}\log\norm{X_{N,n}(\Theta)}$}\label{S:concentration}
As mentioned above, a key step towards proving Theorems \ref{T:global} and \ref{T:normal} is to obtain precise concentration estimates for 
\begin{equation}
    \label{E:XnN-theta-def}\frac{1}{N}\log\norm{X_{N,n}(\Theta)}=\frac{1}{N}\log\norm{X_{N,n}\theta_1\wedge\cdots\wedge X_{N,n}\theta_k},
\end{equation}
where $\Theta = \lr{\theta_1,\ldots,\theta_k}$ is a fixed orthonormal system in $\R^n.$ Define
\begin{equation}\label{E:M-xi-def}
    M_j:=n-j+1,\qquad\xi_{n,k}=\frac{1}{n}\sum_{j=1}^k \frac{1}{M_j}
\end{equation}
and as in \eqref{E:mu-def} set
\[\mu_{n,j}:=\frac{1}{2}\E{\log\lr{\frac{1}{n}\chi_{n-j+1}^2}}.\]
Our main result about the concentration for $\log\norm{X_{N,n}(\Theta)}$ is the following.
\begin{proposition}\label{P:pointwise}
There exists a universal constant $c>0$ with the following property. Fix any orthonormal system $\Theta$ of $k$ vectors in $\R^n$. With $X_{N,n}(\Theta)$ as in \eqref{E:XnN-theta-def}, we have

\begin{equation}
\label{proposition9-1}
\mathbb P\left( \abs{ \frac{ 1 }{ nN }\log \norm{X_{N,n}(\Theta)} - \frac{1}{n}\sum_{j=1}^k \mu_{n,j}}\geq s \right) \leq 2 \exp\left\{ - c n N \min\{ M_{k} s , \frac{s^{2}}{ \xi_{n,k}}\} \right\}, \qquad s>0 . 
\end{equation}
\end{proposition}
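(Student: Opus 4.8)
The plan is to use Lemma~\ref{L:pointwise-iid} to realize $\log\norm{X_{N,n}(\Theta)}$ as a sum of $N$ i.i.d.\ random variables, to identify the common distribution as $\sum_{j=1}^{k}\tfrac12\log(\tfrac1n\chi^2_{M_j})$ with independent factors, and then to run a two-stage moment estimate feeding \L ata\l a's inequality. Concretely: writing $X_{N,n}=A_N\cdots A_1$ and $\Theta^{(i)}=A_i\cdots A_1\theta_1\wedge\cdots\wedge A_i\cdots A_1\theta_k\in\Lambda^k\R^n$ (with $\norm{\Theta^{(0)}}=1$ since $\Theta$ is orthonormal), telescoping gives
\[\log\norm{X_{N,n}(\Theta)}=\sum_{i=1}^{N}\log\frac{\norm{\Theta^{(i)}}}{\norm{\Theta^{(i-1)}}}.\]
Conditionally on $A_1,\dots,A_{i-1}$, the $i$-th increment equals $\tfrac12\log\det(U^{*}A_i^{*}A_iU)$ for a (a.s.\ well-defined) orthonormal $n\times k$ matrix $U$ spanning $\mathrm{Span}\{A_{i-1}\cdots A_1\theta_\ell\}$; since $A_iU$ has i.i.d.\ $\mathcal N(0,1/n)$ columns and is independent of $A_1,\dots,A_{i-1}$, the projection formula \eqref{wedge-1.5} together with Lemma~\ref{L:polar} gives $\det(U^{*}A_i^{*}A_iU)\stackrel{d}{=}n^{-k}\prod_{j=1}^{k}\chi^2_{M_j}$ with independent factors. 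Hence the increments are i.i.d.\ copies of $Y:=\sum_{j=1}^{k}\tfrac12\log(\tfrac1n\chi^2_{M_j})$ (this is the content of Lemma~\ref{L:pointwise-iid}), and $\E{Y}=\sum_{j=1}^{k}\mu_{n,j}$ by \eqref{E:mu-def}. Since the event in \eqref{proposition9-1} is exactly $\{\abs{S_N}\ge nNs\}$ with $S_N:=\sum_{i=1}^{N}(Y_i-\E{Y})$, it suffices to show $\mathbb P(\abs{S_N}\ge t)\le 2\exp(-c\min\{t^{2}/(Nn\xi_{n,k}),\,M_k t\})$.

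Next I would establish moment bounds for a single summand. Writing $Y-\E{Y}=\sum_{j=1}^{k}W_j$ with $W_j:=\tfrac12(\log\chi^2_{M_j}-\E{\log\chi^2_{M_j}})$ independent, the key input is the sub-gamma estimate $\norm{W_j}_p\le C(\sqrt{p/M_j}+p/M_j)$ for all $p\ge 2$. This comes from the Chernoff bound $\mathbb P(\chi^2_m\le t)\le (t/m)^{m/2}e^{m/2}$, which makes the (heavier) left tail of $\log\chi^2_m$ exponential with rate $\asymp M_j$, from the standard one-sided sub-exponential bound for the right tail, and from $\abs{\E{\log\chi^2_m}-\log m}=O(1/m)$; this ``moments of $\log\chi^2$'' estimate, sharp in $m$, is supplied by Proposition~\ref{P:moments}. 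Summing over $j$ — either via \L ata\l a's general bound \eqref{Latala-1} or directly from the sub-gamma calculus, using $\sum_{j=1}^{k}M_j^{-1}=n\xi_{n,k}$ and $\min_{1\le j\le k}M_j=M_k$ — yields $\norm{Y-\E{Y}}_p\le C(\sqrt{p\,n\xi_{n,k}}+p/M_k)$ for all $p\ge 2$.

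Finally, $S_N$ is a sum of $Nk$ independent centered random variables with total variance proxy $\asymp Nn\xi_{n,k}$ and maximal scale proxy $\asymp 1/M_k$, so a standard Bernstein inequality for such sums (equivalently, the i.i.d.\ form \eqref{Latala-2} of \L ata\l a's theorem applied to $S_N=\sum_{i=1}^{N}(Y_i-\E{Y})$ using the bound from the previous paragraph, followed by the routine passage from moment growth $\norm{S_N}_p\lesssim a\sqrt p+bp$ to tails) gives $\mathbb P(\abs{S_N}\ge t)\le 2\exp(-c\min\{t^{2}/(Nn\xi_{n,k}),\,M_k t\})$. Taking $t=nNs$ and using $t^{2}/(Nn\xi_{n,k})=nN\,s^{2}/\xi_{n,k}$ and $M_k t=nN\,M_k s$ gives exactly \eqref{proposition9-1}.

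I expect the main obstacle to be the $\log\chi^2$ moment estimate of the second paragraph (Proposition~\ref{P:moments}): one needs bounds on $\norm{\log\chi^2_m-\E{\log\chi^2_m}}_p$ that are sharp in $m$ simultaneously for small $m$ (where $\log\chi^2_m$ has only an exponential left tail, e.g.\ $\log\chi^2_1=2\log\abs{g}$) and for large $m$ (where it is essentially Gaussian with variance $\asymp 1/m$), and one must track carefully that the scale parameter surviving the sum over $j$ is $1/M_k=1/(n-k+1)$ rather than an average of $1/M_j$. Once that estimate is in hand, the two applications of \L ata\l a's inequality (or the sub-gamma/Bernstein bookkeeping) and the moments-to-tails dictionary are routine.
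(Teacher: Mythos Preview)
Your proposal is correct and follows essentially the same route as the paper: reduce via Lemma~\ref{L:pointwise-iid} to an i.i.d.\ sum, establish the sub-gamma moment bound $\norm{W_j}_p\le C(\sqrt{p/M_j}+p/M_j)$ for the centered $\log\chi^2_{M_j}$ summands, aggregate over $j$ and then over $i$ via \L ata\l a's inequality (or equivalently Bernstein), and read off the tail from the resulting moment growth. One small misattribution: the single-summand estimate $\norm{W_j}_p\le C(\sqrt{p/M_j}+p/M_j)$ is Lemma~\ref{L:tij-est} in the paper, not Proposition~\ref{P:moments}; the latter is the already-aggregated bound $\norm{S_N}_p\le C(\sqrt{pNn\xi_{n,k}}+p/M_k)$, whose proof (the ``summing over $j$'' step you describe) is exactly the technical part you flag as the main obstacle.
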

\begin{remark}
The double behavior in the exponent of the estimates \eqref{proposition9-1} is of Bernstein-type. We do not use any off-the-shelf Bernstein estimates for deriving it however, relying instead on Theorem \ref{Latala} of \L ata\l a \cite{latala1997estimation}. One advantage of our approach is that \L ata\l a's estimates are all reversible (i.e. have matching upper and lower bounds). Hence, with a bit more work it is possible to show that the estimate \eqref{proposition9-1} is sharp. We will not need this fact, however, and will provide only a proof of the upper bound.
\end{remark}
\begin{remark}
Although we focus in this article on the Gaussian case, we believe it is possible to prove that Proposition \ref{P:pointwise} holds under minimal assumptions on the distribution of the entries of $A_i.$ Somewhat weaker results in this directions are proved in \cite[Thms. 7,8]{le1982theoremes} and \cite[Thm. 5.1]{bougerol1985concentration}.
\end{remark}
\begin{remark}
By Lemma \ref{L:pointwise-iid} below, we have
\[ \E{\frac{ 1 }{ N }\log \norm{X_{N,n}(\Theta)}} = \sum_{j=1}^k \mu_{n,j}.\]
\end{remark}
The proof of Proposition \ref{P:pointwise} proceeds from the observation that for the Gaussian case we consider here, $\log X_{N,n}(\Theta)$ is a sum of independent random variables. 
\begin{lemma}\label{L:pointwise-iid}
Fix $n,N\geq 1$ and $1\leq k \leq n$ as well as a collection $\Theta$ of $k$ orthonormal vectors in $\R^n.$ We have
\[\log\norm{X_{N,n}(\Theta)}\stackrel{d}{=}\sum_{i=1}^N \sum_{j=1}^k Y_{i,j},\]
where $\stackrel{d}{=}$ denotes equality in distribution, $Y_{i,j}$ are independent, and for each $i,j$ the random variable $Y_{i,j}$ is distributed like the logarithm  $\frac{1}{2}\log(\frac{1}{n}\chi_{n-j+1}^2)$ of a normalized chi-squared random variable with $n-j+1$ degrees of freedom. 
\end{lemma}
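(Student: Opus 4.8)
The plan is to peel off the $N$ matrices one at a time, reducing $\norm{X_{N,n}(\Theta)}$ to a telescoping product of independent factors, and then to decompose each factor using the Gram--Schmidt projection formula \eqref{wedge-1.5}.

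First I would work in $\Lambda^k\R^n$ and set $\omega_m := X_{m,n}\theta_1\wedge\cdots\wedge X_{m,n}\theta_k$, so that $\omega_0 = \theta_1\wedge\cdots\wedge\theta_k$ has unit norm (as $\Theta$ is orthonormal, $\sqrt{\det(T^{*}T)}=1$ where $T$ has columns $\theta_i$) and $\omega_m = A_m\omega_{m-1}$ under the induced action of $A_m$ on $\Lambda^k\R^n$. The elementary point is that for any nonzero decomposable $k$-vector $\omega=v_1\wedge\cdots\wedge v_k$ and any linear map $B$, one has $\norm{Bv_1\wedge\cdots\wedge Bv_k}=\norm{\omega}\cdot\norm{Bu_1\wedge\cdots\wedge Bu_k}$ for any orthonormal basis $u_1,\dots,u_k$ of $\mathrm{span}(v_1,\dots,v_k)$: writing $(v_1,\dots,v_k)=(u_1,\dots,u_k)M$ with $\abs{\det M}=\norm{\omega}$ by the Gram identity \eqref{wedge-def}, this is just multilinearity of the wedge. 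Applying this with $B=A_m$ and $\omega=\omega_{m-1}$ gives $\norm{\omega_m}=\norm{\omega_{m-1}}\,\rho_m$, where $\rho_m:=\norm{A_m u^{(m)}_1\wedge\cdots\wedge A_m u^{(m)}_k}$ and $u^{(m)}_1,\dots,u^{(m)}_k$ is any orthonormal frame spanning the $k$-plane determined by $\omega_{m-1}$; this frame is a (measurable) function of $A_1,\dots,A_{m-1}$ only, hence independent of $A_m$. Iterating yields $\norm{X_{N,n}(\Theta)}=\prod_{m=1}^N\rho_m$.

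Next I would show that $\rho_1,\dots,\rho_N$ are mutually independent, each distributed as $R:=\norm{g_1\wedge\cdots\wedge g_k}$ with $g_1,\dots,g_k$ iid $\mathcal N(0,\tfrac1n I_n)$. Conditioning on $\mathcal F_{m-1}:=\sigma(A_1,\dots,A_{m-1})$, the frame $u^{(m)}$ is deterministic and $A_m$ is an independent Gaussian matrix; extending $u^{(m)}$ to an orthogonal matrix $Q$ with $Qe_j=u^{(m)}_j$ for $j\le k$ and using that the law of $A_m$ is invariant under $A_m\mapsto A_mQ$ (its entries are iid, so its law is right-orthogonally invariant), the conditional law of $\rho_m$ given $\mathcal F_{m-1}$ equals the law of $\norm{g_1\wedge\cdots\wedge g_k}$ with $g_j$ the first $k$ columns of $A_m$ --- a fixed law not depending on the conditioning. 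Hence $\rho_m$ is independent of $\mathcal F_{m-1}$, and since $\rho_1,\dots,\rho_{m-1}$ are $\mathcal F_{m-1}$-measurable, an induction on $m$ gives mutual independence with common law that of $R$. Then I would decompose $R$ itself: by the projection formula \eqref{wedge-1.5}, $R=\prod_{j=1}^k\norm{P_{V_{j-1}^{\perp}}g_j}$ with $V_{j-1}=\mathrm{span}(g_1,\dots,g_{j-1})$; conditionally on $g_1,\dots,g_{j-1}$ the subspace $V_{j-1}^{\perp}$ has dimension $n-j+1$ almost surely and $P_{V_{j-1}^{\perp}}g_j$ is Gaussian on it with covariance $\tfrac1n$ times the identity, so $\norm{P_{V_{j-1}^{\perp}}g_j}^2\stackrel{d}{=}\tfrac1n\chi_{n-j+1}^2$, while Lemma \ref{L:polar} (applied to $\xi_j=\sqrt n\,g_j$) gives that the $k$ factors are jointly independent. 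Taking logarithms and combining the two reductions yields
\[
\log\norm{X_{N,n}(\Theta)}\ \stackrel{d}{=}\ \sum_{m=1}^N\sum_{j=1}^k \tfrac12\log\lr{\tfrac1n\chi_{n-j+1}^2},
\]
with all $Nk$ summands independent, which is exactly the claim with $Y_{i,j}$ the $(i,j)$ term.

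The step I expect to need the most care is the independence of $\rho_1,\dots,\rho_N$: the point is not merely that each $\rho_m$ has the correct marginal, but that its \emph{conditional} law given $A_1,\dots,A_{m-1}$ is that same fixed law, which is precisely where right-orthogonal invariance of the Gaussian matrix together with the $\mathcal F_{m-1}$-measurability of the auxiliary frame $u^{(m)}$ come in. A minor technical nuisance --- the need for a measurable selection of the orthonormal frame $u^{(m)}$ --- can be sidestepped entirely by phrasing the peeling step through the change-of-basis determinant identity in the first paragraph rather than through an explicit orthonormalization.
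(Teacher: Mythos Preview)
Your proof is correct and follows essentially the same two-stage strategy as the paper: peel off the $N$ matrix factors to get an iid sum $\sum_{m=1}^N \log\rho_m$, then decompose each $\log\rho_m$ via the projection formula \eqref{wedge-1.5} and Lemma \ref{L:polar}. The only real difference is in how the outer independence is argued: the paper separates the two telescoping terms by invoking Lemma \ref{L:polar} directly (independence of $\norm{A_1(\Theta)}$ and $A_1(\Theta)/\norm{A_1(\Theta)}$), whereas you condition on the filtration $\mathcal F_{m-1}$ and use right-orthogonal invariance of $A_m$ to show the conditional law of $\rho_m$ is a fixed law not depending on the past. Your route is arguably cleaner and makes the mutual independence of all $N$ factors more transparent in a single step, while the paper's route keeps everything phrased in terms of the normalized wedge $A_1(\Theta)/\norm{A_1(\Theta)}$ and reuses Lemma \ref{L:polar} twice; substantively the two arguments are the same. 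Your closing remark about measurable selection is harmless but unnecessary: Gram--Schmidt on $X_{m-1,n}\theta_1,\ldots,X_{m-1,n}\theta_k$ gives a perfectly good $\mathcal F_{m-1}$-measurable frame.
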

\begin{proof}
We have
\begin{align}
\notag    \log\norm{X_{N,n}(\Theta)} &= \log \norm{A_N\cdots A_1\lr{\Theta}}\\
\label{E:one-term}    &= \log \norm{A_N\cdots A_2 \frac{A_1\lr{\Theta}}{\norm{A_1(\Theta)}}}+\log\norm{A_1(\Theta)}.
\end{align}
Note that by Lemma \ref{L:polar}, we have that
\[\norm{A_1(\Theta)} = \norm{A_1\theta_1\wedge \cdots \wedge A_1 \theta_k}\]
is independent of 
\[\frac{A_1(\Theta)}{\norm{A_1(\Theta)}}=\frac{A_1\theta_1\wedge \cdots \wedge A_1 \theta_k}{\norm{A_1\theta_1\wedge \cdots \wedge A_1 \theta_k}}.\]
Hence the two terms in \eqref{E:one-term} are independent. Moreover, $A_2\lr{\frac{A_1(\Theta)}{\norm{A_1(\Theta)}}}$ is independent of $A_3,\ldots, A_N$ and, in distribution, we have
\begin{equation}\label{E:dist-wedge}
    A_2\lr{\frac{A_1(\Theta)}{\norm{A_1(\Theta)}}}\stackrel{d}{=}A_2(\Theta).
\end{equation}
Indeed, we may write
\[\frac{A_1(\Theta)}{\norm{A_1(\Theta)}}=\frac{A_1\theta_1\wedge \cdots \wedge A_k\theta_k}{\norm{A_1\theta_1\wedge \cdots \wedge A_k\theta_k}}= \frac{A_1\theta_1}{\norm{A_1\theta_1}}\wedge \frac{\Pi_{\leq 1}A_1\theta_1}{\norm{\Pi_{\leq 1}A_1\theta_1}}\wedge \cdots \wedge \frac{\Pi_{\leq k-1} A_1\theta_k}{\norm{\Pi_{\leq k-1} A_1\theta_k}},\]
where we've written $\Pi_{\leq i}$ for the projection onto the complement of the span of $A_1\theta_1,\ldots,A_1\theta_i.$ Next, we may choose an orthogonal matrix $M$ so that
\[\Pi_{\leq i-1}A_1\theta_i = M e_i, \]
where $e_i$ is the $i^{th}$ standard basis vector. For this choice of $M,$ we find
\[\frac{A_1(\Theta)}{\norm{A_1(\Theta)}}=Me_1\wedge \cdots \wedge Me_k= M(e_1\wedge \cdots \wedge e_k).\]
Since $A_2 \stackrel{d}{=}A_2M$, we conclude that \eqref{E:dist-wedge} holds. In particular, we find that, in distribution, 
\[\log \norm{X_{N,n}(\Theta)}\stackrel{d}{=}\sum_{i=1}^N \log \norm{A_i(\Theta)}\]
is a sum of iid terms. Finally, for any fixed $i=1,\ldots, N$ \[\norm{A_i(\Theta)}\stackrel{d}{=}\norm{\xi_1\wedge \cdots \wedge \xi_k},\]
where $\xi_i$ are iid $n$-dimensional standard Gaussians. Hence, by the projection formula \eqref{wedge-1.5}, we find that
\[\norm{A_i(\Theta)}\stackrel{d}{=}\prod_{i=1}^k \norm{P_{\leq i-1} \xi_i},\]
where we've denoted by $P_{\leq j}$ the projection onto the orthogonal complement of the span of $\xi_1,\ldots,\xi_j$. The terms in the product are independent by Lemma \ref{L:polar}, and the distribution of the $i^{th}$ term is precisely the same as that of $\sqrt{\frac{1}{n}\chi_{n-i+1}^2}$, completing the proof. 
\end{proof}

Lemma \ref{L:pointwise-iid} allows us to obtain precise estimates on the rate of growth of moments of $\log X_{N,n}(\Theta)$ using the result of \L ata\l a \cite{latala1997estimation} (Theorem \ref{Latala} above). These moment estimates, in turn, yield Proposition \ref{P:pointwise} via Markov's inequality applied to the optimal power of $\log \norm{X_{N,n}(\Theta)}$. We carry out these details in \S \ref{S:pointwise-pf}.

\subsection{Details for Proof of Proposition \ref{P:pointwise}}\label{S:pointwise-pf}
The purpose of this section is to prove Proposition \ref{P:pointwise}. Throughout this section $C,C^{\prime},c,c^{\prime}$ etc will be universal constants that may change from line to line. Recalling from \eqref{E:M-xi-def} the notation
\begin{equation}\label{E:M-xi-mu-def}
M_j=n-j+1,\quad \xi_{n,k}=\frac{1}{n}\sum_{j=1}^k\frac{1}{M_j},
\end{equation}
we seek to show that for $s>0$ 
\begin{equation}\label{E:pointwise-goal-1}
\mathbb P\lr{\abs{\frac{1}{nN}\log\norm{X_{N,n}(\Theta)}- \frac{1}{n}\sum_{j=1}^k\mu_{n,j}}\geq s}\leq 2 \exp\lr{-cnN\min\set{M_ks,\frac{s^2}{\xi_{n,k}}}}.
\end{equation}
where we remind the reader that as in \eqref{E:mu-def}, we've set
\[\mu_{n,j}:=\frac{1}{2}\E{\log\lr{\frac{1}{n}\chi_{n-j+1}^2}}.\]
According to Lemma \ref{L:pointwise-iid}, we have in distribution that
\[\frac{2}{N}\log\norm{X_{N,n}(\Theta)} =\frac{1}{N}\sum_{i=1}^N T_i,\]
where $T_i$ are independent and
\begin{equation}\label{E:T-def}
T_i =\sum_{j=1}^k t_{i,j},\qquad t_{i,j}\sim \log\lr{\frac{1}{n}\chi_{n-j+1}^2}\quad iid.
\end{equation}
Hence, we find in particular that
\begin{equation}\label{E:T-mean}
    \E{\frac{1}{N}\log\norm{X_{N,n}(\Theta)}}= \sum_{j=1}^k \mu_{n,j}
\end{equation}
and see that Proposition \ref{P:pointwise} is equivalent to showing that for any $s>0$
\begin{equation}\label{E:pointwise-goal-2}
\mathbb P\lr{\abs{\frac{1}{nN}\sum_{i=1}^N \overline{T}_i}\geq s}\leq 2 \exp\lr{-cnN\min\set{M_ks,\frac{s^2}{\xi_{n,k}}}},
\end{equation}
where for any random variable $Y$ we will use the shorthand
\[\overline{Y}:=Y- \E{Y}.\]
We will obtain \eqref{E:pointwise-goal-2} by Markov's inequality applied to certain moments of the sum of the $T_i$'s. Specifically, we will prove the following estimate. 
\begin{proposition}\label{P:moments}
There exists a universal constant $C$ so that for any $n,N,k$ and $p\geq 1$ 
\[\lr{\E{\bigg|\sum_{i=1}^N \overline{T}_i\bigg|^p}}^{1/p} \leq C\lr{\sqrt{pN \sum_{j=1}^k \frac{1}{M_j}} + \frac{p}{M_k}}.\]
\end{proposition}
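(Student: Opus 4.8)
The plan is to reduce, via Lemma \ref{L:pointwise-iid}, to bounding the $L^p$ norm of a double sum of independent variables, and then to apply \L ata\l a's reversible moment estimates (Theorem \ref{Latala}) in two stages. With $\overline{Y}:=Y-\E{Y}$, $t_{i,j}\sim\log\lr{\tfrac1n\chi^2_{M_j}}$ as in \eqref{E:T-def}, and $M_j=n-j+1$, we must control $\norm{\sum_{i=1}^N\overline{T}_i}_p$ where the $\overline{T}_i=\sum_{j=1}^k\overline{t_{i,j}}$ are iid, the $\overline{t_{i,j}}$ are independent over $j$, and $\overline{t_{i,j}}=\log\chi^2_{M_j}-\E{\log\chi^2_{M_j}}$ (the factor $\tfrac1n$ inside the log drops out upon centering). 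Writing $a:=\lr{\sum_{j=1}^k 1/M_j}^{1/2}$ and $b:=1/M_k$, and noting $M_k=n-k+1\geq1$ so that $b\leq a$, the goal is $\norm{\sum_{i=1}^N\overline{T}_i}_p\lesssim a\sqrt{pN}+bp$ (for $p\in[1,2]$ this follows from the $p=2$ case, so assume $p\geq2$). First I would bound $\norm{\overline{T}_i}_s\lesssim a\sqrt s+bs$ for all $s\geq2$ using \eqref{Latala-1}; then I would insert this into the iid formula \eqref{Latala-2} for the outer sum over $i$.

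The single-variable input is the bound $\norm{\log\chi^2_M-\E{\log\chi^2_M}}_s\leq C\lr{\sqrt{s/M}+s/M}$ for $s\geq2$. I would obtain it from the explicit Laplace transform $\E{(\chi^2_M)^{\lambda}}=2^{\lambda}\Gamma(M/2+\lambda)/\Gamma(M/2)$, valid for $\lambda>-M/2$, whose logarithm has second derivative $\psi'(M/2+\lambda)$; standard bounds on the trigamma function give $\psi'(M/2+\lambda)\leq C/M$ for $\lambda\geq-M/4$, so $\log\chi^2_M$ is sub-exponential with variance proxy $O(1/M)$ on the range $\abs{\lambda}\leq M/4$, and the displayed moment estimate is the routine consequence. (Alternatively one integrates $\E{\abs{\cdot}^s}=s\int_0^\infty t^{s-1}\mathbb P(\abs{\cdot}>t)\,dt$ against the left-tail bound $\mathbb P(\chi^2_M\leq\eps)\leq(e\eps/M)^{M/2}$ and the usual sub-exponential concentration of $\chi^2_M/M$ about $1$.) Since $M_1\geq\cdots\geq M_k$, the heaviest tail among the $\overline{t_{i,j}}$ is that of $j=k$, which is why only $M_k$ enters the second term $b$.

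For the inner sum, \eqref{Latala-1} identifies $\norm{\overline{T}_i}_s$, up to universal constants, with the least $t$ satisfying $\sum_{j=1}^k\log\E{\abs{1+\overline{t_{i,j}}/t}^s}\leq s$. Taking $t=C(a\sqrt s+bs)$, one splits each summand into a variance-dominated part (of size $O\lr{s^2\Var(\overline{t_{i,j}})/t^2}=O\lr{s^2/(M_jt^2)}$) and a tail-dominated part controlled by $s\norm{\overline{t_{i,j}}}_s/t$; summing over $j$ and using the previous step, the variance parts contribute $O(s^2a^2/t^2)$ and the tail parts $O(sb/t)$ up to logarithmic factors, both at most $s/2$ once $C$ is large. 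This gives $\norm{\overline{T}_i}_s\leq C(a\sqrt s+bs)$ for $s\geq2$ (equivalently, $\overline{T}_i$ is sub-exponential with the correct parameters). Feeding this into \eqref{Latala-2} and separating the two terms,
\[
\norm{\sum_{i=1}^N\overline{T}_i}_p\ \lesssim\ a\sup_{s}\frac{p}{\sqrt s}\lr{\frac Np}^{1/s}\ +\ b\sup_{s}p\lr{\frac Np}^{1/s},
\]
with the suprema over $\max\{2,p/N\}\leq s\leq p$. An elementary calculus check (the maximizer in $s$ is always an endpoint of the admissible interval) shows the first supremum is $\asymp\sqrt{pN}$ for all $N,p$, while the second is $\asymp\sqrt{pN}$ when $N\geq p$ and $\asymp p$ when $N<p$; since $b\leq a$, the right-hand side is $\lesssim a\sqrt{pN}+bp=\sqrt{pN\sum_{j=1}^k 1/M_j}+p/M_k$ in every case, which is the claim.

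I expect the main obstacle to be the middle step: because \L ata\l a's first formula only characterizes $\norm{\overline{T}_i}_s$ implicitly, one must carefully estimate $\log\E{\abs{1+X/t}^s}$ for a centered variable that is sub-Gaussian for moderate $s$ and sub-exponential for large $s$, and verify that the contributions from the $k$ different scales $M_1,\dots,M_k$ recombine with the right constants into $a$ and $b$. By comparison, the chi-squared moment bound and the final one-dimensional optimization over $s$ are comparatively routine.
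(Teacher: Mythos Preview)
Your proposal is correct and follows essentially the same route as the paper: the single-variable input $\norm{\overline{t}_{i,j}}_s\lesssim\sqrt{s/M_j}+s/M_j$ is the paper's Lemma~\ref{L:tij-est}, the inner-sum bound $\norm{\overline{T}_i}_s\lesssim a\sqrt s+bs$ via \eqref{Latala-1} is exactly \eqref{E:moments-goal}, and the final optimization through \eqref{Latala-2} matches the paper's concluding case analysis. You are also right that the middle step is the crux; the paper executes it by assuming $p$ even, expanding $\E{(1+\overline{t}_{i,j}/t)^p}$ binomially, and splitting into the ranges $p\le p_0:=M_k^2\sum_j M_j^{-1}$ and $p\ge p_0$ with the two choices $t\asymp a\sqrt p$ and $t\asymp bp$ respectively, rather than the looser variance/tail heuristic you sketch.
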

The proof of Proposition \ref{P:moments}, which is straightforward but tedious, is given in \S \ref{S:moments-pf} below. We assume it for now and complete the proof of \eqref{E:pointwise-goal-2}. Write
\begin{equation}\label{E:p_0-def}
    p_0 := M_k^2 \sum_{j=1}^k \frac{1}{M_j}
\end{equation}
and note that 
\[p\leq Np_0\quad \Longleftrightarrow \quad  \frac{p}{M_k}\leq \sqrt{pN \sum_{j=1}^k \frac{1}{M_j}}.\]
Thus, applying Markov's inequality to Proposition \ref{P:moments} shows that there exists $C>0$ so that for $1\leq p \leq Np_0$
\[\mathbb P\lr{\bigg|\frac{1}{nN}\sum_{i=1}^N \overline{T}_i\bigg| \geq \frac{C}{n\sqrt{N}}\sqrt{p\sum_{j=1}^k\frac{1}{M_j}}}\leq e^{-p}.\]
Equivalently, recalling that
\[\xi_{n,k}=\frac{1}{n}\sum_{j=1}^k \frac{1}{M_j},\]
we see that there exists $c>0$ so that
\[\mathbb P\lr{\bigg|\frac{1}{nN}\sum_{i=1}^N \overline{T}_i\bigg| \geq s}\leq 2e^{-cnN\frac{s^2}{\xi_{n,k}}},\qquad 0\leq s \leq C \xi_{n,k}M_k.\]
This establishes \eqref{E:pointwise-goal-2} in this range of $s.$ To treat $s\geq CM_k\xi_{n,k}$, we again apply Markov's inequality to Proposition \ref{P:moments} to see that there exists $C>0$ so that 
\[p\geq N p_0\quad \Longrightarrow\quad \mathbb P\lr{\bigg|\sum_{i=1}^N \overline{T}_i \bigg| > C\frac{p}{M_k}}\leq e^{-p}.\]
Hence, there exists $c>0$ so that
\[\mathbb P\lr{\bigg|\frac{1}{nN}\sum_{i=1}^N \overline{T}_i\bigg|\geq s}\leq e^{-cnNM_ks},\quad s \geq CM_k \xi_{n,k},\]
completing the proof of \eqref{E:pointwise-goal-2}. 

\subsection{Proof of Proposition \ref{P:moments}}\label{S:moments-pf}
We seek to estimate the moments of
\[\sum_{i=1}^N \overline{T_i} = \sum_{i=1}^N \sum_{j=1}^k \overline{t}_{i,j},\qquad t_{i,j}\sim \log(\frac{1}{n}\chi_{n-j+1}^2).\]
By Theorem \ref{Latala}, we have
\begin{equation}\label{E:Latala-iid}
\lr{\E{\bigg|\sum_{i=1}^N \overline{T}_i\bigg|^p}}^{1/p}\simeq \sup_{\max\set{2,\tfrac{p}{N}}\leq s \leq p}\frac{p}{s}\lr{\frac{N}{p}}^{\frac{1}{s}}\E{\abs{\overline{T}_i}^s}^{\frac{1}{s}}.
\end{equation}
Our strategy is therefore to estimate $\E{\abs{\overline{T}_i}^s}^{1/s}$ and optimize over $s$. In particular, we seek to show that there exists $C>0$ so that for every $p\geq 1$, we have
\begin{equation}\label{E:moments-goal}
\lr{\E{\abs{\overline{T}_i}^p}}^{1/p}\leq C\lr{\sqrt{p\sum_{j=1}^k M_j^{-1}} +\frac{p}{M_k}}.
\end{equation}
Our first step towards showing \eqref{E:moments-goal} is to obtain the following estimates on the moments of $\overline{t}_{i,j}$.
\begin{lemma}\label{L:tij-est}
There exist $C_1>0$ such that
\[ \lr{\E{\abs{\overline{t}_{i,j}}^p}}^{1/p}\leq C_1 \max\set{\sqrt{\frac{p}{M_j}}, \frac{p}{M_j}}.\]
\end{lemma}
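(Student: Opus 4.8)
The plan is to analyze the moments of the centered random variable $\overline{t}_{i,j} = \log(\tfrac{1}{n}\chi^2_{M_j}) - \E\log(\tfrac{1}{n}\chi^2_{M_j})$, where $M_j = n-j+1$. Since the $\tfrac{1}{n}$ scaling is deterministic, it cancels in the centering, so it suffices to bound the $L^p$-norm of $\overline{L} := \log \chi^2_{M} - \E\log\chi^2_M$ for a chi-squared with $M := M_j$ degrees of freedom; the claimed bound $C_1\max\{\sqrt{p/M},\,p/M\}$ is exactly the Bernstein/sub-gamma scaling one expects for $\log$ of a sum of $M$ i.i.d. squared Gaussians. First I would write $\chi^2_M = \sum_{\ell=1}^M g_\ell^2$ with $g_\ell$ i.i.d.\ standard Gaussian, so $\log\chi^2_M = \log M + \log(\tfrac{1}{M}\sum g_\ell^2)$, and the constant $\log M$ disappears under centering. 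Then $\overline{L} = \log(1 + Z)$ where $Z := \tfrac{1}{M}\sum_{\ell=1}^M(g_\ell^2 - 1)$ is a mean-zero average of i.i.d.\ centered $\chi^2_1$ variables, plus a further recentering by $\E\log(1+Z)$, which is $O(1/M)$ and harmless.

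The key estimates are then: (i) an upper-tail bound, where $\log(1+Z) \le Z$ for $Z > -1$, and $Z$ itself has sub-gamma moments $\|Z\|_p \lesssim \max\{\sqrt{p/M}, p/M\}$ by the standard moment bound for sums of i.i.d.\ sub-exponential variables (e.g.\ directly from Theorem \ref{Latala} applied to the i.i.d.\ sum $\sum_\ell (g_\ell^2-1)$, since $\|g_\ell^2 - 1\|_s \lesssim s$); and (ii) a lower-tail bound, where the only danger is $Z$ close to $-1$, i.e.\ $\chi^2_M$ atypically small. For this I would split the expectation $\E|\overline{L}|^p = \E[|\overline{L}|^p \mathbf{1}_{Z \ge -1/2}] + \E[|\overline{L}|^p \mathbf{1}_{Z < -1/2}]$. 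On the event $\{Z \ge -1/2\}$ we have $|\log(1+Z)| \le 2|Z|$ (using $|\log(1+x)| \le 2|x|$ for $x \ge -1/2$), so this piece is controlled by part (i). On the rare event $\{Z < -1/2\}$, i.e.\ $\chi^2_M < M/2$, we use that $\mathbb P(\chi^2_M < M/2) \le e^{-cM}$ (a Chernoff/large-deviation bound, or Theorem \ref{T:iid-small-ball}) together with a crude bound: $|\log\chi^2_M|$ has at most polynomial moments near $0$ since $\chi^2_M$ has a bounded density there for $M \ge 1$ (for $M=1$, $\chi^2_1$ has density $\sim x^{-1/2}$ near $0$, still giving $\E|\log\chi^2_1|^p < \infty$ with $\|\log\chi^2_1\|_p \lesssim p$); Cauchy--Schwarz against the $e^{-cM}$ probability then shows the rare-event contribution is exponentially smaller and absorbed into the constant.

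The main obstacle is handling the lower tail uniformly in $M$, especially the boundary case $M = M_j$ small (down to $M_k$ possibly equal to $1$ when $k = n$), where the density of $\chi^2_M$ blows up at the origin and the naive ``$\log$ of something bounded away from zero'' heuristic fails. The resolution is the two-event split above: one must check that the exponential smallness $e^{-cM}$ of the bad event beats the at-worst-factorial growth of $\E[|\log\chi^2_M|^{2p}]^{1/2}$, which it does once $p \lesssim M$; and for $p \gtrsim M$ the claimed bound $p/M \gtrsim 1$ is already so large that the crude estimate $\|\overline{L}\|_p \lesssim p$ (valid since $\log(1+Z)$ is sub-exponential on the whole line once one accounts for the left tail via the density bound) suffices. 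Combining the two regimes $p \le M$ and $p \ge M$ yields $\|\overline{t}_{i,j}\|_p \lesssim \max\{\sqrt{p/M_j},\, p/M_j\}$, which is the assertion. I would also remark that, alternatively, one can invoke the known sub-exponential concentration of $\log\det$ of Wishart matrices, but the direct i.i.d.-sum argument via \L ata\l a keeps everything self-contained and reversible.
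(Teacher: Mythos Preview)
Your outline matches the paper's in its ingredients: both use Bernstein-type concentration for $Z=\tfrac{1}{M_j}\chi^2_{M_j}-1$ to control the bulk/upper tail, and a small-ball estimate for $\chi^2_{M_j}$ near zero (Theorem~\ref{T:iid-small-ball}) to control the lower tail. The paper packages these into a single two-sided tail bound $\mathbb P\bigl(|\widehat t_{i,j}|\ge s\bigr)\le 4e^{-cM_j\min\{s,s^2\}}$ and then integrates against $ps^{p-1}\,ds$ to read off the moment estimate directly; you instead try to work at the moment level via the event split $\{Z\ge -1/2\}\cup\{Z<-1/2\}$.

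The gap is in your rare-event step. Cauchy--Schwarz against $\mathbb P(Z<-1/2)\le e^{-cM_j}$ together with the crude bound $\|\overline L\|_{2p}\lesssim p$ gives a contribution to $\|\overline L\|_p$ of order $p\cdot e^{-cM_j/(2p)}$. For $p$ comparable to $M_j$ this is of order $M_j$, whereas the target is $\max\{\sqrt{p/M_j},\,p/M_j\}=O(1)$ there; likewise for $p\gg M_j$ it gives $\sim p$, not $p/M_j$. So the claim ``the exponential smallness beats the factorial growth once $p\lesssim M_j$'' is false as stated, and the claim that for $p\gtrsim M_j$ the crude bound $\lesssim p$ suffices confuses the target $p/M_j$ with $1$. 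The fix is exactly what the paper does: on the event $\{\chi^2_{M_j}<M_j/2\}$, integrate the tail directly using $\mathbb P(\chi^2_{M_j}<M_je^{-s})\le (Ce^{-s/2})^{M_j}$ for $s$ large (and the Chernoff bound $e^{-cM_j}$ for $s$ bounded), which yields the correct $p/M_j$ contribution from the far left tail. Once you replace Cauchy--Schwarz by this tail integration, your argument becomes essentially the paper's.
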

\begin{proof}
We first make a reduction. Namely, let us check that the estimates in Lemma \ref{L:tij-est} for $\overline{t}_{i,j}= t_{i,j}-\E{\log(n^{-1}\chi_{M_j}^2)}$ follow from the same estimates for 
\[
\widehat{t}_{i,j}:=t_{i,j}-\log(n^{-1}M_j).
\]
To see this, recall that by \eqref{E:mu-def-second} and \eqref{E:mu-est}, we have
\[
\E{\log\lr{\frac{1}{n}\chi_{M_j}^2}} = \log\lr{\frac{2}{n}} + \psi\lr{\frac{M_j}{2}} = \log\lr{\frac{M_j}{n}}+ \eps_j,\qquad \eps_j = O(M_j^{-1})
\]
where $\psi$ is the digamma function, and we have used its asymptotic expansion $\psi(z)\sim \log(z)+O(z^{-1})$ for large arguments. Thus, we have for each $i$ that
\begin{align*}
\E{\abs{\overline{t}_{i,j}}^p} &= \E{\abs{\widehat{t}_{i,j}+\eps_j}^p}\leq \sum_{k=0}^p\binom{p}{k} \E{\abs{\widehat{t}_{i,j}}^k} \abs{\eps_j}^{p-k}. 
\end{align*}
So assuming that $\widehat{t}_{i,j}$ satisfy the conclusion of Lemma \ref{L:tij-est}, we find
\begin{align*}
\E{\abs{\overline{t}_{i,j}}^p}&\leq \sum_{k=0}^p\binom{p}{k}\zeta_{k,j}^k\abs{\eps_j}^{p-k}, \qquad \zeta_{k,j}:=C\max\set{\sqrt{\frac{k}{M_j}},\frac{k}{M_j}}.
\end{align*}
Since for $0\leq k \leq p$ we have $\zeta_{k,j}\leq \zeta_{p,j}$, we see that
\[
\E{\abs{\overline{t}_{i,j}}^p}\leq \sum_{k=0}^p\binom{p}{k}\zeta_{p,j}^k\abs{\eps_j}^{p-k} \leq \lr{\zeta_{p,j}+\abs{\eps_j}}^p.
\]
Finally, since $\eps_j=O(M_j^{-1})=O(\zeta_{p,j})$, we find that there exists $C>0$ so that
\[
\lr{\E{\abs{\overline{t}_{i,j}}^p}}^{1/p}\leq  C \zeta_{p,j}\leq C\max\set{\sqrt{\frac{p}{M_j}},\, \frac{p}{M_j}},
\]
as desired. It therefore remains to show that $\widehat{t}_{i,j}=t_{i,j} - \log\lr{n^{-1}M_j}$ satisfies the conclusion of Lemma \ref{L:tij-est}. 
To do this, we begin by checking that there exists $c_1>0$ so that, with $M_j=n-j+1$, for all $s\geq 0$
\begin{equation}\label{E:tij-est}
     \mathbb P\lr{\abs{t_{i,j} - \log\lr{\frac{M_j}{n}}}\geq s}\leq 4e^{-c_1M_j\min\set{s,s^2}}.
\end{equation}
We have
\begin{align*}
    \mathbb P\lr{\abs{t_{i,j} - \log\lr{\frac{M_j}{n}}}\geq s } &= \mathbb P\lr{\abs{\log\lr{\frac{1}{n}\chi_{M_j}^2} - \log\lr{\frac{M_j}{n}}}\geq s }\\
    &=\mathbb P\lr{\abs{\log\lr{\frac{1}{M_j}\chi_{M_j}^2}}\geq s }\\
    &=\mathbb P\lr{\chi_{M_j}^2\geq M_j e^s }+\mathbb P\lr{\chi_{M_j}^2\leq M_j e^{-s} }.
\end{align*}
Let us first bound $\mathbb P\lr{\chi_{M_j}^2\geq M_j e^s }$. Note that the mean of $\chi_{M_j}^2$ is $M_j$ and that $\chi_{M_j}^2 - M_j$ is a sub-exponential random variable with parameters $(4M_j,4)$. Thus, Bernstein's tail estimates for sub-exponential random variables yield the existence of $c>0$ such that for all $t\geq 0$
\begin{equation}\label{E:bernstein}
\mathbb P\lr{\abs{M_j^{-1}\chi_{M_j}^2 - 1}\geq t}\leq 2e^{-cM_j\min\set{t,t^2}}.
\end{equation}
In particular, 
\[
\mathbb P\lr{\chi_{M_j}^2 \geq M_je^s} =\mathbb P\lr{M_j^{-1}\chi_{M_j}^2-1 \geq e^s-1} \leq 2 e^{-cM_j\min\set{e^s-1,(e^s-1)^2}} \leq   2 e^{-cM_j\min\set{s,s^2}},
\]
where in the last inequality we used that $e^s-1 \geq s.$ This gives the first half of \eqref{E:tij-est}.

Let us now obtain a similar estimate on $\mathbb P\lr{\chi_{M_j}^2\leq M_j e^{-s} }$, which is a small ball estimate for a sum of iid random variables with bounded density. We need to consider two cases. Theorem \ref{T:iid-small-ball} on small ball estimates for sums of iid random variables shows that there exists a universal constant $C>0$ so that
\[
\mathbb P\lr{\chi_{M_j}^2\leq M_j e^{-s} } \leq (Ce^{-s/2})^{M_j}.
\]
Hence, 
\[
s > s_*:=2\log(C)\quad \Rightarrow \quad \exists c>0 \text{ s.t. } \mathbb P\lr{\chi_{M_j}^2\leq M_j e^{-s} }\leq e^{-csM_j},
\]
giving the desired estimate in this range. Finally, suppose $s\leq s_*$. Then,
\[
e^{-s} \leq 1- e^{-s_*}s
\]
since both sides equal $1$ at $s=0$ and the derivative $-e^{-s}$ of the left hand side is more negative than the derivative $-e^{-s_*}$ of the right hand side for all $s\in (0,s_* )$. Thus, we find for $s\in (0,s_*)$ that
\[
\mathbb P\lr{\chi_{M_j}^2\leq M_j e^{-s} } \leq \mathbb P\lr{\chi_{M_j}^2\leq M_j (1-e^{-s_*}s)} \leq \mathbb P\lr{\abs{M_j^{-1}\chi_{M_j}^2-1}> e^{-s_*}s}
\]
Using the Bernstein inequality \eqref{E:bernstein}, we obtain that there exists $c>0$ depending only on $s_*$ such that for $s\in (0,s_*)$,
\[
\mathbb P\lr{\chi_{M_j}^2\leq M_j e^{-s} } \leq 2e^{-cM_j\min\set{s,s^2}},
\]
giving the desired estimate in this range as well. This establishes \eqref{E:tij-est}. 
To complete the proof that $\widehat{t}_{i,j}$ satisfy Lemma \ref{L:tij-est}, we use \eqref{E:tij-est} to write
\begin{align*}
\E{\abs{\widehat{t}_{i,j}}^p}&=\int_0^\infty \mathbb P\lr{\abs{\widehat{t}_{i,j}}>x}px^{p-1}dx\\
&\leq p \left[\int_0^1 e^{-cM_jx^2} x^{p-1}dx + \int_1^\infty e^{-cM_jx}x^{p-1}dx\right].
\end{align*}
The first term can be estimated by comparing to the moments of a Gaussian as follows:
\begin{align*}
p \int_0^1 e^{-cM_jx^2} x^{p-1}dx &= p\lr{ 2cM_j}^{-p/2} \int_0^{(2cM_j)^{1/2}} e^{-x^2}x^{p-1}dx\\
&\leq p\lr{ 2cM_j}^{-p/2} \int_0^{\infty} e^{-x^2}x^{p-1}dx\\
&\leq p\lr{ 2cM_j}^{-p/2} 2^{\frac{p}{2}}\Gamma\lr{\frac{p}{2}}\\
&\leq p\lr{ \frac{p}{2cM_j}}^{p/2},
\end{align*}
where we used that for $z>0$ we have $\Gamma(z)\leq z^z.$ The second term can similarly be estimated by comparing the moments of an exponential:
\begin{align*}
p \int_1^\infty e^{-cM_jx} x^{p-1}dx &= p (cM_j)^{-p}\int_{cM_j}^\infty e^{-x}x^{p-1}dx\\
&=p(cM_j)^{-p} (p-1)!\\
&\leq p\lr{\frac{p}{cM_j}}^p.
\end{align*}
Putting these two estimates together and taking $1/p$ powers, we find that there exists $C>0$ so that
\[ \lr{\E{\abs{\widehat{t}_{i,j}}^p}}^{1/p}\leq C \max\set{\sqrt{\frac{p}{M_j}}, \frac{p}{M_j}}\]
for all $p\geq 1.$ This completes the proof. 
\end{proof}

\noindent With Lemma \ref{L:tij-est} in hand, we are now in a position to show \eqref{E:moments-goal}. Since
\[\overline{T}_i=\sum_{j=1}^k \overline{t}_{i,j},\]
we have by Theorem \ref{Latala} that
\begin{equation}\label{E:Latala-noniid}
\left( \mathbb E |\overline{T}_i|^{p} \right)^{\frac{1}{p}} \simeq  \inf\left\{ t>0 : \sum_{j=1}^{k} \log \left[ \mathbb E | 1+ \frac{\overline{t}_{i,j}}{ t}|^{p} \right] \leq p \right\},    
\end{equation}
where $\simeq$ means bounded above and below by absolute constants. We will use the notation from \eqref{E:p_0-def}:
\[p_0=M_k^2\sum_{j=1}^k M_j^{-1}.\]
Since
\[\sqrt{p\sum_{j=1}^k M_j^{-1}} \leq  \frac{p}{M_k}\qquad \Longleftrightarrow\qquad p\geq p_0,\]
we will show \eqref{E:moments-goal} by breaking into two cases. Namely, we will show that there exists $C>0$ so that
\begin{equation}\label{E:moments-goal-1}
p\leq p_0\quad \Longrightarrow\quad \left( \mathbb E \left| \overline{T}_i \right|^{p} \right)^{\frac{1}{p}} \leq C \sqrt{p \sum_{j=1}^{k} M_{j}^{-1} }  
\end{equation}
as well as
\begin{equation}
\label{E:moments-goal-2}
p\geq p_0\quad\Longrightarrow\quad\left( \mathbb E \left| \overline{T}_i \right|^{p} \right)^{\frac{1}{p}} \leq C\frac{p}{M_{k}}. 
\end{equation}
We may assume without loss of generality that $p$ is an even integer. Indeed, once we've show \eqref{E:moments-goal-1} and \eqref{E:moments-goal-2} for even integers $p$ (and a uniform constant $C$), we may use that 
\[\lr{\E{\abs{\overline{T}_i}^p}}^{1/p}\leq \lr{\E{\abs{\overline{T}_i}^{p'}}}^{1/p'},\]
where $p'$ is the smallest even integer greater than or equal to $p.$ This yields \eqref{E:moments-goal-1} and \eqref{E:moments-goal-2} for any  $p$ with $C$ replaced by $2C.$ 

We now turn to showing that \eqref{E:moments-goal-1} holds with $p$ an even integer. To do this, we will need to evaluate the expectation $\E{|1+t^{-1}t_{i,j}|^p}$ appearing in \eqref{E:Latala-noniid}. A key point is to use that $\overline{t}_{i,j}$ are centered. Since $p$ is even, we may bring this to bear most directly by noting that the absolute value in $\E{|1+t^{-1}t_{i,j}|^p}$ is unnecessary and using that $\E{\overline{t}_{i,j}}=0$. Lemma \ref{L:tij-est} and the well-known estimate
\begin{equation}\label{E:binom-est}
\lr{\frac{n}{k}}^k\leq \binom{n}{k}\leq \lr{\frac{n}{k}}^k e^k,\qquad k\geq 1
\end{equation}
yield that for all $i,j$ we have:
\begin{align}
\notag \E{\lr{ 1 + \frac{ \overline{t}_{i,j}}{ t} }^{p}} &= 1 + \sum_{\ell=2}^{p} { p\choose \ell} \frac{ \E{ \overline{t}_{i,j}^{\ell}} }{ t^{\ell}}\\
\notag &\leq 1 + \sum_{\ell=2}^{\min\set{p,M_j}} \binom{p}{\ell} \lr{\frac{C\ell}{t^2M_j}}^{\ell/2}+ \sum_{\ell=M_j+1}^p\binom{p}{\ell} \lr{\frac{C\ell}{t M_j}}^\ell\\
\notag &\leq 1 + \sum_{\ell=2}^{\min\set{p,M_j}} \lr{\frac{p}{\ell}}^\ell \lr{\frac{C\ell}{t^2M_j}}^{\ell/2}+ \sum_{\ell=M_j+1}^p \lr{\frac{Cp}{t M_j}}^\ell\\
\label{E:t-t-est}&\leq 1 + \sum_{\ell=2}^{\min\set{p,M_j}} \lr{\frac{p}{\ell}}^{\ell/2} \lr{\frac{Cp}{t^2M_j}}^{\ell/2}+ \sum_{\ell=M_j+1}^p \lr{\frac{Cp}{t M_j}}^\ell.
\end{align}
We now bound the first two terms in the previous line by breaking into the terms where $\ell$ is even and odd. When $\ell$ is even the terms in \eqref{E:t-t-est} are bounded above by
\begin{align}
\label{E:even-est-1}   1+ \sum_{\substack{\ell=2\\ \ell\text{ even}}}^{\min\set{p,M_j}}  \lr{\frac{p}{\ell}}^{\ell/2} \lr{\frac{C'p}{t^2M_j}}^{\ell/2}&\leq 1+\sum_{\substack{\ell=2\\ \ell\text{ even}}}^{\min\set{p,M_j}} \binom{p/2}{\ell/2} \lr{\frac{C'p}{t^2 M_j}}^{\ell/2}\leq  \lr{1+\frac{C'p}{t^2M_j}}^{p/2}.
\end{align}
To bound the odd terms in \eqref{E:t-t-est}, let us first note that for any $0\leq m \leq \ell\leq p$, we have
\[
\lr{\frac{p}{\ell}}^\ell \leq p^m \binom{p}{\ell-m}.
\]
Indeed, when $\ell=m$, this equality simply reads
\[
\lr{\frac{p}{\ell}}^{\ell} \leq p^{\ell+1},
\]
while if $m<\ell,$ we note that the inequality is equivalent to 
\[
\frac{p^{\ell-m}}{\ell^\ell} \leq \binom{p}{\ell-m},
\]
which follows by estimating the expression on the left hand side using \eqref{E:binom-est} as follows:
\[
\lr{\frac{p}{\ell}}^{\ell-m} \frac{1}{\ell^{m}}=\lr{\frac{p}{\ell-m}}^{\ell-m}\lr{\frac{\ell-m}{\ell}}^{\ell-m} \frac{1}{\ell^{m}}\leq \binom{p}{\ell-m}.
\]
Thus, the odd terms in \eqref{E:t-t-est} are bounded above by
\begin{align}
\notag \sum_{\substack{\ell=3\\ \ell\text{ odd}}}^{\min\set{p,M_j}} \lr{\frac{p}{\ell}}^{\ell/2}  \lr{\frac{Cp}{t^2 M_j}}^{\ell/2} &\leq \min_{m=1,3}\set{\lr{\frac{Cp^2}{t^2M_j}}^{\frac{m}{2}}\sum_{\substack{\ell=3\\ \ell\text{ odd}}}^{p-1}   {p\choose \ell-m}^{\frac{1}{2}} \lr{\frac{Cp}{t^2M_j}}^{\frac{\ell-m}{2}}}.
\end{align}
To proceed, note that for any $0\leq b\leq a$
\[
\binom{2a}{2b} \leq 2^b \binom{a}{b}^2.
\]
This inequality follows by observing that for any $j=0,\ldots, b-1$, we have
\[
\frac{(2a-2j)(2a-2j-1)}{(2b-2j-1)} = \frac{(a-j)(a-j-1/2)}{(b-j)(b-j-1/2)}\leq 2 \lr{\frac{a-j}{b-j}}^2 
\]
and repeatedly applying this estimate to the terms in $\binom{2a}{2b}$. Thus, we obtain 
\begin{align}
\notag \sum_{\substack{\ell=3\\ \ell\text{ odd}}}^{\min\set{p,M_j}} \lr{\frac{p}{\ell}}^{\ell/2}  \lr{\frac{Cp}{t^2 M_j}}^{\ell/2} &\leq \min_{m=1,3}\set{\lr{\frac{Cp^2}{t^2M_j}}^{\frac{m}{2}}}\sum_{\ell=0}^{p/2}  \binom{p/2}{\ell}\lr{\frac{Cp}{t^2M_j}}^{\ell}\\
\notag &=\min_{m=1,3}\set{\lr{\frac{Cp^2}{t^2M_j}}^{\frac{m}{2}}} \lr{1+\frac{Cp}{t^2 M_j}}^{p/2}\\
 &\leq \lr{\frac{Cp^2}{t^2M_j}} \lr{1+\frac{Cp}{t^2 M_j}}^{p/2}\label{E:odd-est-1},
\end{align}
where in the last inequality we've used that $\min\set{x^{1/2}, x^{3/2}}\leq x$ for all $x\geq 0.$ Putting together \eqref{E:even-est-1} and
\eqref{E:odd-est-1} we see that there exists $C>0$ so that
\begin{equation}\label{E:gen-moment-est}
    \E{ \lr{1+\frac{\overline{t}_{i,j}}{t}}^p}\leq \lr{1+\frac{Cp^2}{t^2 M_j}}\lr{1+\frac{Cp}{t^2M_j}}^{p/2}+ \sum_{\ell=M_j+1}^p \lr{\frac{Cp}{t M_j}}^\ell.
\end{equation}
Let us now verify \eqref{E:moments-goal-1}. Recall that for $j\leq k$ we have $M_{j} \geq M_{k}$ and that  $p\leq p_{0} = M_{k}^{2} \sum_{j=1}^{k} \frac{1}{M_{j}}$.  We set
\begin{equation}\label{E:t-setting}
    t = \sqrt{C'p \sum_{i=1}^k M_i^{-1}}
\end{equation}
where 
\[
C^{\prime}= \max\set{\lr{16C}^2,\, 2C^{1/2}}
\]
is an absolute constant depending only on the constant $C$ appearing in \eqref{E:gen-moment-est}. For this choice of $C'$, we have
\[\frac{Cp}{tM_j} = C\sqrt{\frac{p^2}{C'pM_j^{2} \sum_{i=1}^kM_i^{-1}}} \leq   C\sqrt{\frac{p}{C'M_k^{2} \sum_{i=1}^kM_i^{-1}}} = C\sqrt{\frac{p}{C'p_0}}\leq \frac{1}{16}, \forall j \leq k \]
and 
$$ \sum_{j=1}^{k} \frac{Cp}{ t^{2} M_{j}} \leq \frac{C}{(C^{\prime})^{2}} \leq \frac{1}{4}.$$
In particular for $j\leq k $ such that $ M_{j} \leq p $, 
$$ \sum_{\ell= M_{j}+ 1}^{p} \left( \frac{Cp}{ t M_{j}}\right)^{\ell} \leq \sum_{\ell= M_{j}+ 1}^{p} \frac{1}{ 16^{\ell}}  \leq  \frac{1}{ 4^{M_{j}}} $$ 
Hence, since $ \log{(a+b)} \leq (\log{a}) + b $ for $ a\geq 1$ and $ b>0$, 
\begin{align}
\notag\sum_{j=1}^k\log \E{\lr{1+\lr{\frac{\overline{t}_{i,j}}{t}}^p}} &\leq \frac{p}{2}\sum_{j=1}^k \log\lr{1+\frac{Cp}{t^2M_j}} + \sum_{j=1}^k\log\lr{1+\frac{Cp^2}{t^2M_j}}+\sum_{j=1}^k \lr{\frac{1}{4}}^{M_j}\\
\notag&\leq \frac{p}{2}\sum_{j=1}^k \frac{Cp}{t^2M_j} + \sum_{j=1}^k \frac{Cp^2}{t^2M_j}+ \sum_{s= n-k+1}^{n} \frac{1}{4^{s}} \\
\notag&= \frac{3p}{8}+\frac{1}{2}\\
\notag&\leq p.
\end{align}
Hence, \eqref{E:moments-goal-1} follows from \eqref{E:Latala-noniid}. We now turn to the case when $p\geq p_0$ and seek to show \eqref{E:moments-goal-2}. Rather than \eqref{E:t-setting}, to show \eqref{E:moments-goal-2}, we set
\begin{equation}\label{E:t-setting-2}
    t = \frac{C'p}{M_k}
\end{equation}
with 
\[
C' = \max\set{4C,\, 2C^{1/2}}.
\]
Then,
\[\frac{Cp}{tM_j} = \frac{CM_k}{C'M_j}<\frac{C}{C'}\leq \frac{1}{4}\]
 and
\[\sum_{j=1}^k \frac{Cp}{t^2M_j} = \sum_{j=1}^k \frac{CM_k^2}{(C')^2pM_j}= \frac{C}{(C')^2} \frac{p_0}{p}\leq \frac{C}{(C')^2}\leq\frac{1}{4} .\]
Hence from \eqref{E:gen-moment-est}, we find
\begin{align*}
\sum_{j=1}^k\log \E{\lr{1+\lr{\frac{\overline{t}_{i,j}}{t}}^p}} &\leq \frac{p}{2}\sum_{j=1}^k \log\lr{1+\frac{Cp}{t^2M_j}} + \sum_{j=1}^k\log\lr{1+\frac{Cp^2}{t^2M_j}}+\sum_{j=1}^k \lr{\frac{1}{4}}^{M_j}\\
&\leq \frac{p}{2}\sum_{j=1}^k \frac{Cp}{t^2M_j} + \sum_{j=1}^k\frac{Cp^2}{t^2M_j}+\frac{1}{2}\\
&\leq \frac{3Cp}{2(C')^2} +\frac{1}{2}\\
&\leq p.
\end{align*}
Thus, we see that relation \eqref{E:moments-goal-2} also follows from \eqref{E:Latala-noniid}. We are now in a position to finish the proof of Proposition \ref{P:pointwise} by combining \eqref{E:Latala-iid} with \eqref{E:moments-goal-1} and \eqref{E:moments-goal-2}. We find from \eqref{E:Latala-iid} that
\begin{align}
\notag    \lr{\E{\abs{\sum_{i=1}^N \overline{T}_i}^p}}^{1/p} &\leq \sup_{\max\set{2,\frac{p}{N}}\leq s \leq p} \frac{p}{s}\lr{\frac{N}{p}}^{1/s}\E{\abs{\overline{T}_i}^s}^{1/s}.
\end{align}
If $p\leq p_0$, then \eqref{E:moments-goal-1} implies
\begin{align}
\label{E:p-small}   \lr{\E{\abs{\sum_{i=1}^N \overline{T}_i}^p}}^{1/p} &\leq C \sup_{\max\set{2,\frac{p}{N}}\leq s \leq p} \frac{p}{s}\lr{\frac{N}{p}}^{1/s}\sqrt{s\sum_{j=1}^kM_j^{-1}}.
\end{align}
Define
\[f(s)=\log\lr{\frac{p}{s}\lr{\frac{N}{p}}^{1/s}\sqrt{s}}=\log(p) - \frac{1}{2}\log(s) + \frac{1}{s}\log\lr{\frac{N}{p}}.\]
Note that
\[f'(s) = -\frac{1}{2s}-\frac{1}{s^2}\log\lr{\frac{N}{p}}.\]
When $p\leq N$ the function $f$ is manifestly monotone decreasing in $s>0$. Therefore, taking $s=2$, we find
\[
p\leq N \quad \Rightarrow \quad  \lr{\E{\abs{\sum_{i=1}^N \overline{T}_i}^p}}^{1/p} \leq C \sqrt{pN\sum_{j=1}^k M_j^{-1}}
\]
On the other hand, when $p\geq N$, we have
\[
\frac{p}{s}\lr{\frac{N}{p}}^{1/s}\sqrt{s\sum_{j=1}^k M_j^{-1}} \leq ps^{-1/2}\sqrt{\sum_{j=1}^k M_j^{-1}},
\]
which is strictly decreasing in $s>0$. Hence, taking $s=p/N$, we again find
\[
p\geq N \quad \Rightarrow \quad  \lr{\E{\abs{\sum_{i=1}^N \overline{T}_i}^p}}^{1/p} \leq C \sqrt{pN\sum_{j=1}^k M_j^{-1}}.
\]
Hence, we find that if $p\leq p_0$, then
\begin{align*}   \lr{\E{\abs{\sum_{i=1}^N \overline{T}_i}^p}}^{1/p} &\leq C  \sqrt{pN\sum_{j=1}^kM_j^{-1}},
\end{align*}
as desired. Finally, if $p\geq p_0$, then \eqref{E:moments-goal-2} implies
\begin{align}
\label{E:p-small}   \lr{\E{\abs{\sum_{i=1}^N \overline{T}_i}^p}}^{1/p} &\leq C \sup_{\max\set{2,\frac{p}{N}}\leq s \leq p} \frac{p}{s}\lr{\frac{N}{p}}^{1/s}\frac{s}{M_k}=\frac{Cp}{M_k} \sup_{\max\set{2,\frac{p}{N}}\leq s \leq p} \lr{\frac{N}{p}}^{1/s}.
\end{align}
Note that if $p/N\geq 1$, then for every $s\geq 2$ we have $(N/p)^{1/s}\leq 1$ and hence 
\[\frac{Cp}{M_k}\sup_{\max\set{2,\frac{p}{N}}\leq s \leq p} \lr{\frac{N}{p}}^{1/s}\leq \frac{Cp}{M_k}.\]
Further, if $p/N\leq 1$, then $(N/p)^{1/s}$ is monotonically decreasing with $s$ and hence
\[\frac{Cp}{M_k}\sup_{\max\set{2,\frac{p}{N}}\leq s \leq p} \lr{\frac{N}{p}}^{1/s}\leq \frac{Cp}{M_k}\lr{\frac{N}{p}}^{1/2}=C\sqrt{pNM_k^{-2}}\leq C\sqrt{pN\sum_{j=1}^k M_j^{-1}}.\]
Thus, in all cases we find
\[\lr{\E{\bigg|\sum_{i=1}^N \overline{T}_i\bigg|^p}}^{1/p} \leq C\lr{\sqrt{pN \sum_{j=1}^k \frac{1}{M_j}} + \frac{p}{M_k}},\]
which is precisely the statement of Proposition \ref{P:moments}.\hfill $\square$

\section{Lyapunov Sums: Proof of Theorem \ref{T:sums}}\label{S:sums-pf}

To prove Theorem \ref{T:sums}, we start by combining previous estimates for $\frac{1}{N}\log\norm{X_{N,n}(\Theta)}$ from Proposition \ref{P:pointwise} with the deviation estimates in Proposition \ref{P:pointwise-small-ball}. Let recall the definition of the function $g$ (cf \eqref{E:g-def})
\begin{equation}
g_{n,k}(s)  = \left\{ \begin{array}{cc} 
    \min\left\{ 1, \frac{ n s}{ k}\right\} \  ,& \hspace{5mm} k\leq \frac{n}{2} \\
    \min\left\{ \delta_{n,k}, \frac{ s}{ \log{1/\delta_{n,k}}} \right\} \  ,& \hspace{5mm} k>\frac{n}{2} \\
\end{array} \right.,
\end{equation}
where we recall that for $ k \geq \frac{n}{2}$ 
\[ \delta_{n,k}:= \frac{ n-k+1}{n}.\]
Let $ s>0$ and  $ 1\leq k<m  \leq n. $
Writing 
$$  p_{s,k, m} := \mathbb P \left( \abs{ \frac{1}{n} \sum_{i=m}^{k}\lr{ \lambda_{i} -\mu_{n,i}} } \geq s \right), $$
we seek to show that there exist universal constants $c_1,c_2,c_3>0$ such that for any $ 1\leq m \leq k \leq n $ and every $ s\geq c_1 \frac{k}{nN} \log{\frac{en}{k}} $ we have
\begin{equation}
\label{conc-L-1-new}
p_{s,k,m} \leq c_2 \exp{ \left\{ - c_3\min\set{nNs,  n^2 N s   g_{n,k}(s) } \right\}}.
\end{equation}
To see this, note that the triangle inequality yields
\[p_{s,k,m}\leq p_{s/2,k,1} + p_{s/2,m-1,1}.\]
Hence, it suffices to prove \eqref{conc-L-1-new} with $m=1.$ To do this, fix $\Theta \in \mathrm{Fr}_{n,k}$, an orthonormal $k-$frame in $\R^n$. We may write for any $s>0$
\begin{align}
\label{E:prob-1}    p_{s,k, 1} & \leq \mathbb P\lr{\abs{\frac{1}{n}\sum_{i=1}^k \lambda_i - \frac{1}{nN}\log\norm{X_{N,n}(\Theta)}}\geq \frac{s}{2}}\\
\label{E:prob-2}    &+ \mathbb P\lr{\abs{\frac{1}{nN}\log\norm{X_{N,n}(\Theta)} - \frac{1}{n}\sum_{i=1}^k \mu_{n,i}}\geq \frac{s}{2}}.
\end{align}
We will show separately that the probabilities in \eqref{E:prob-1} and \eqref{E:prob-2} are both bounded above by the right hand side of \eqref{conc-L-1-new}. To check this for \eqref{E:prob-2}, note that Proposition \ref{P:pointwise} guarantees 
\begin{equation}
\label{proposition9-1-new-00}
\mathbb P\left( \left| \frac{  \log{ \| X_{N,n} ( \Theta)\|  }  }{ nN } -  \frac{1}{n} \sum_{j=1}^{k} \mu_{n,j}  \right|\geq s \right) \leq 2 \exp\left\{ - c n N \min\{ M_{k} s , \frac{s^{2}}{ \xi_{n,k}}\} \right\}   , s>0 , 
\end{equation}
where we remind the reader that
$$ M_{j} := n-j+1 , \  \xi_{n,k} := \frac{1}{n} \sum_{j=1}^{k} \frac{ 1}{M_{j}} , \ \mu_{n,k} := \frac{1}{2}\mathbb E \left[ \log\left( \frac{1}{n} \chi_{n-j+1}^{2}\right)\right] .$$
Some routine algebra reveals 
\begin{equation}
\label{xi-mu-estimates-1}
k\leq \frac{n}{2}\qquad \Rightarrow \qquad n\xi_{n,k} \simeq \frac{ k}{n} ,\qquad  M_{k}  \simeq n
\end{equation}
and
\begin{equation}
\label{xi-mu-estimates-1}
k\geq \frac{n}{2}\qquad \Rightarrow \qquad n\xi_{n,k} \simeq \log{\left( \frac{1}{\delta_{n,k}}\right)}  ,\qquad M_{k}=\delta_{n,k}n,
\end{equation}
where $a \simeq b$ means that there exists $c_1,c_2>0$ so that $c_1a\leq b\leq c_2 a.$ Hence, 
\[
k\leq \frac{n}{2}\quad \Rightarrow \quad \min\set{M_ks,\frac{s^2}{\xi_{n,k}}} \simeq ns\min\set{1, \frac{ns}{k}}
\]
and similarly 
\[
k\geq \frac{n}{2}\quad \Rightarrow \quad \min\set{M_ks,\frac{s^2}{\xi_{n,k}}} \simeq ns\set{\delta_{n,k}, \frac{s}{\log(1/\delta_{n,k})}}.
\]
Putting these two estimates together, we find that 
\eqref{proposition9-1-new-00} yields for any $s>0$
\begin{align}
\label{proposition9-1-new-1}
\mathbb P\left(  \left|  \frac{ \log \norm{X_{N,n}(\Theta)} }{ nN } - \frac{1}{n}  \sum_{j=1}^{k} \mu_{n,j}   \right| \geq s \right) \leq 2 \exp\left\{ - c n^{2} N sg_{n,k}(s) \right\},
\end{align}
as desired. Turning to the probability in \eqref{E:prob-1}, recall that in Proposition \ref{P:pointwise-small-ball}, we have shown that for every $ \varepsilon\in (0,1) $, 
 \[\mathbb P\lr{\abs{\frac{1}{nN}\log \norm{X_{N,n}(\Theta)} -  \frac{1}{n}\sum_{i=1}^{k} \lambda_{i} }\geq \frac{k}{2Nn}\log\lr{\frac{n}{k\eps^2}}}\leq \lr{C\eps}^\frac{k}{2}.\]
If we set $ s:= \frac{k}{nN} \log{ \frac{en}{k\varepsilon^{2}}} $, then
\[
(C\eps)^{k/2} = \exp\left[-\frac{1}{4}snN + \frac{k}{4}\log\lr{\frac{en}{k}}+\frac{k}{2}\log(C)\right].
\]
Hence, assuming that
\[
s\geq C'\frac{k}{nN}\log\lr{\frac{en}{k}}
\]
for $C'$ sufficiently large, we arrive to the following expression:
\begin{equation}
\label{conc-by-sb}
\mathbb P \left(\abs{  \frac{1}{n} \sum_{i=1}^{k} \lambda_{i} - \frac{ \log \norm{X_{N,n}(\Theta)}}{nN} }\geq  s  \right) \leq e^{ - \frac{snN }{4}}, \quad s\geq C'\frac{ k}{nN} \log\lr{ \frac{en}{k}}.
\end{equation}
Thus, putting together \eqref{proposition9-1-new-1} and \eqref{conc-by-sb}, we find that
\[
p_{s,k,1}\leq c_2 \exp\set{-c_3 \min\set{nNs, n^2 Ns g_{n,k}(s)}},
\]
completing the proof. \hfill $\square$

\section{Convergence to the Triangle Law: Proof of Theorem \ref{T:global}}\label{S:global-pf}
In this section, we derive Theorem \ref{T:global} from Theorem \ref{T:sums}. We will need the following elementary result.

\begin{lemma}\label{L:digamma}
Fix positive integers $n,q,m$ satisfying $4\leq m\leq q\leq n$.
Then
\[\frac{m}{2n}\log\lr{q/n} - \frac{1}{n}\sum_{j=n-q+1}^{n-q+m} \mu_{n,j}\geq \frac{(m-1)^2}{4nq}.\]
Further, assuming that $n-q-m\geq 0$, we also have
\[\frac{m}{2n}\log(q/n)-\frac{1}{n}\sum_{j=n-q-m+1}^{n-q}\mu_{n,j}\leq -\frac{m(m-3)}{3nq}.\]
\end{lemma}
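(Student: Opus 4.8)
The plan is to reduce both inequalities, via the closed form $\mu_{n,j}=\frac12\bigl(\log(2/n)+\psi((n-j+1)/2)\bigr)$, to elementary estimates on sums of digamma values. In the first sum the arguments $n-j+1$ run through the $m$ consecutive integers $q-m+1,\ldots,q$, and in the second through $q+1,\ldots,q+m$; this is exactly where $n-q-m\ge 0$ is needed, namely to keep the summation index $j$ in the admissible range. Since $\frac{m}{2n}\log(q/n)$ absorbs the $m$ copies of $\frac1{2n}\log(2/n)$, both left-hand sides collapse to $\frac1{2n}\sum_{\ell}\bigl[\log(q/2)-\psi(\ell/2)\bigr]$, with $\ell$ running over the relevant block. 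Into this I would feed the two classical digamma bounds $\psi(z)\le\log z-\frac1{2z}$ ($z>0$) and $\psi(z)\ge\log(z-\frac12)$ ($z>\frac12$); all arguments $\ell/2$ that occur satisfy $\ell/2\ge\frac12$, and in the second inequality $\ell/2\ge\frac52$, so both apply.

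For the first inequality (a lower bound) I would use $\psi(\ell/2)\le\log(\ell/2)-\frac1\ell$, giving $\log(q/2)-\psi(\ell/2)\ge\log(q/\ell)+\frac1\ell\ge\log(q/\ell)\ge 1-\frac\ell q=\frac{q-\ell}{q}$, the last step from $\log x\le x-1$ together with $\ell\le q$. Summing over $\ell=q-m+1,\ldots,q$ yields the arithmetic sum $\frac1q(0+1+\cdots+(m-1))=\frac{m(m-1)}{2q}$; dividing by $2n$ and using $m(m-1)\ge(m-1)^2$ gives exactly $\frac{(m-1)^2}{4nq}$. This half is routine and uses only $1\le m\le q$.

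For the second inequality (an upper bound) I would use $\psi(M/2)\ge\log((M-1)/2)$, so $\log(q/2)-\psi(M/2)\le\log\frac{q}{M-1}$; summing over $M=q+1,\ldots,q+m$ and reindexing turns the sum into $-\sum_{i=0}^{m-1}\log(1+i/q)$. I would bound this below with the elementary $\log(1+x)\ge x-\frac12x^2$ ($x\ge 0$), obtaining $\sum_{i=0}^{m-1}\log(1+i/q)\ge\frac{m(m-1)}{2q}-\frac{m(m-1)(2m-1)}{12q^2}$, absorb the quadratic error using $q\ge m$, and thereby reduce the claim to an explicit polynomial inequality in $m$; dividing by $-2n$ then delivers the stated bound once that inequality is checked. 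This is where the hypothesis $m\ge 4$ enters, the target $\frac{m(m-3)}{3nq}$ being nonnegative exactly there.

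The step I expect to be the main obstacle is precisely this last one: pinning down the constant. The cruder lower bound $\psi(z)>\log z-\frac1z$ is too lossy here, so one is forced onto the sharp $\psi(z)\ge\log(z-\frac12)$ (equivalently $\psi(z)\ge\log z-\frac1{2z}-\frac1{12z^2}$), and even then the margin between the elementary estimate coming from $\log(1+x)\ge x-\frac12x^2$ and the target $\frac{2m(m-3)}{3q}$ is narrow, so the Taylor remainder in $\log(1+x)$ and the residual $\sum_M M^{-2}$-type terms must be tracked carefully; a telescoping identity such as $\log(1+i/q)-\frac1{q+i}\ge\log(1+(i-1)/q)$ is convenient for folding the digamma correction back into the logarithmic sum. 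Since only a bound of order $m^2/(nq)$ is actually needed downstream in the proof of Theorem~\ref{T:global}, there is some slack available in the constant, which makes this bookkeeping considerably more forgiving.
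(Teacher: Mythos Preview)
Your argument for the lower bound is correct and is in fact more direct than the paper's: where the paper bounds the sum $\sum_{j}\log\bigl(1/(1-j/q)\bigr)$ below by an integral and then invokes $(1-x)\log(1-x)+x\ge x^2/2$, you use $\log(q/\ell)\ge (q-\ell)/q$ termwise and recognize the result as an arithmetic progression, bypassing the integral comparison entirely.

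For the upper bound, your reduction to $-\frac{1}{2n}\sum_{i=0}^{m-1}\log(1+i/q)$ via $\psi(z)\ge\log(z-\tfrac12)$ is valid, but this route cannot reach the stated constant $\tfrac13$. Even before any Taylor truncation one has $\sum_{i=0}^{m-1}\log(1+i/q)\le \sum_{i=0}^{m-1} i/q = \frac{m(m-1)}{2q}$, so the sharpest conclusion available from your expression is $-\frac{m(m-1)}{4nq}$; for $m\ge 10$ this is strictly larger than $-\frac{m(m-3)}{3nq}$ (the leading terms are $-m^2/(4nq)$ versus $-m^2/(3nq)$). This is not a flaw specific to your choice of digamma bound: expanding $\psi(M/2)=\log(M/2)-1/M+O(1/M^2)$ and summing shows that the left-hand side of the lemma equals $-\frac{m(m-1)}{4nq}+O(1/(nq^2))$ for $q\gg m$, so the printed constant is not attainable. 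The paper's own proof drops a factor of $2$ when passing from $\tfrac{\xi}{2}\bigl[-(1+\eps/\xi)\log(1+\eps/\xi)+\eps/\xi\bigr]\le \tfrac{\xi}{2}\cdot\bigl(-\tfrac{(\eps/\xi)^2}{3}\bigr)=-\tfrac{\eps^2}{6\xi}$ to the claimed $-\eps^2/(3\xi)$; the corrected output of that argument is $-\tfrac{m(m-6)}{6nq}$. Your closing remark is therefore exactly right: what is actually used in the proof of Theorem~\ref{T:global} is only a bound of order $-cm^2/(nq)$, and both your method (which after absorbing the $O(m^3/q^2)$ error via $q\ge m$ gives $-\tfrac{(m-1)(4m+1)}{24nq}$) and the paper's corrected argument supply one.
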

\begin{proof}
Let us first prove the lower bound. Recall that
\begin{equation}\label{E:mu-form}
    \mu_{n,j}=\frac{1}{2}\lr{\log\lr{\frac{2}{n}}+\psi\lr{\frac{n-j+1}{2}}}.
\end{equation}
Moreover, a well-known estimate \cite[eq.6.3.18, p.259]{abramowitz+stegun} for the digamma function is:
\[\psi(x)< \log(x).\]
Using this we obtain
\[\mu_{n,j}< \frac{1}{2}\log\lr{1-\frac{j-1}{n}} ,\]
which allows us to write
\begin{align*}
    \frac{m}{2n}\log\lr{q/n}- \frac{1}{n}\sum_{j=n-q+1}^{n-q+m} \mu_{n,j} &\geq \frac{1}{2n}\sum_{j=n-q+1}^{n-q+m}\log\lr{\frac{q}{n-j+1}}=\frac{1}{2n}\sum_{j=1}^{m-1}\log\lr{\frac{1}{1-j/q}}.
\end{align*}
Since $q,n$ are fixed, let us temporarily introduce
\[\xi:=\frac{q}{n}.\]
With this notation, because $\log(1/(1-t))$ is monotonically increasing for $t\in [0,1)$, we have
\[\frac{1}{2n}\sum_{j=1}^{m-1}\log\lr{\frac{1}{1-j/q}}=\frac{1}{2}\sum_{j=1}^{m-1}\frac{1}{n}\log\lr{\frac{1}{1-(j/n)/\xi}}\geq \frac{1}{2}\int_0^{\frac{m-1}{n}} \log\lr{\frac{1}{1-t/\xi}}dt.\]
Some routine calculus therefore reveals that
\begin{align*}
     \frac{m}{2n}\log\lr{q/n}- \frac{1}{n}\sum_{j=n-q+1}^{n-q+m} \mu_{n,j}&\geq \frac{\xi}{2}\left[\lr{1-\eps/\xi}\log\lr{1-\eps/\xi} + \eps/\xi\right],
\end{align*}
where we've set $\eps := (m-1)/n.$ Finally, note that for $x>0$
\[(1-x)\log(1-x) + x = \sum_{k\geq 2} \frac{x^k}{k(k-1)} \geq x^2/2.\]
Hence, we obtain 
\begin{align*}
     \frac{m}{2n}\log\lr{q/n}- \frac{1}{n}\sum_{j=n-q+1}^{n-q+m} \mu_{n,j}&\geq \frac{\eps^2}{4\xi}=\frac{(m-1)^2}{4nq},
\end{align*}
as claimed. Let us now derive the upper bound. Using \eqref{E:mu-form}, we get
\begin{align*}
    \frac{m}{2n}\log\lr{q/n}- \frac{1}{n}\sum_{j=n-q-m+1}^{n-q} \mu_{n,j} &= \frac{1}{2n}\sum_{j=n-q-m+1}^{n-q}\left\{\log\lr{\frac{q}{2}} - \psi\lr{\frac{n-j+1}{2}}\right\}\\
    &= \frac{1}{2n}\sum_{j=1}^{m}\left\{\log\lr{\frac{q}{2}} - \psi\lr{\frac{q+j}{2}}\right\}.
\end{align*}
Using the inequality (see again \cite[eq.6.3.18, p.259]{abramowitz+stegun})
\[\psi(x)>\log\lr{x} - 1/x,\]
we arrive at the estimate
\begin{align*}
    \frac{m}{2n}\log\lr{q/n}- \frac{1}{n}\sum_{j=n-q-m+1}^{n-q} \mu_{n,j} 
    &\leq \frac{1}{2n}\sum_{j=1}^{m}\left\{\log\lr{\frac{q}{q+j}}+\frac{2}{q+j}\right\}.
\end{align*}
As before, we will estimate this sum above by an integral. Still writing $\xi=q/n$, we have as before
\[\frac{1}{2n}\sum_{j=1}^m \log\lr{\frac{1}{1+j/q}} \leq \frac{\xi}{2}\int_0^{\eps/\xi} \log\lr{\frac{1}{1+t}}dt = \frac{\xi}{2}\left[-(1+\eps/\xi)\log\lr{1+\eps/\xi}+  \eps/\xi\right]\]
where we've now set $\eps = m/n$ (which is slightly different than above). For $x\in (0,1)$, we have 
\[-(1+x)\log(1+x)+x = \sum_{k\geq 2}(-1)^{k+1}\frac{x^k}{k(k-1)}\leq -\frac{x^2}{2}+\frac{x^3}{6}\leq -\frac{x^2}{3},\]
we therefore find
\[\frac{1}{2n}\sum_{j=1}^m \log\lr{\frac{1}{1+j/q}}\leq  -\frac{\eps^2}{3\xi} .\]
Next, 
\[\frac{1}{q}\sum_{j=1}^m \frac{1}{n}\frac{1}{1+\xi^{-1}(j/n)}\leq \frac{1}{q}\int_0^{\eps} \frac{dt}{1+t/\xi} =\frac{\xi}{q}\log\lr{1+ \eps/\xi}\leq  \frac{\eps}{q}.\]
So all together we find the upper bound
\[  \frac{m}{2n}\log\lr{q/n}- \frac{1}{n}\sum_{j=n-q-m+1}^{n-q} \mu_{n,j}\leq -\frac{m^2}{3nq}+\frac{m}{nq}=-\frac{m(m-3)}{3nq}.\]
This completes the proof. 
\end{proof}

\noindent We now conclude the proof of Theorem \ref{T:global}. To do this, fix $\eps>0$ and assume that 
\[n>\frac{c_1\sqrt{\log(1/\eps)}}{\eps},\qquad N>\frac{c_2}{\eps^2}\]
for some constants $c_1,c_2>1$ that we will fix later. To prove Theorem \ref{T:global} note that the bound above on $n$  guarantees that $\eps>c_1/n.$ Hence, we need only consider such $\eps.$ Moreover, we may always assume that
\[\eps = \frac{m}{n}\]
for some integer $5\leq m\leq n$ since $\mathcal U(t)$ is $1$-lipschitz, and will use $\eps$ and $m/n$ interchangeably. Next, recall the following notation for the cumulative distributions
\begin{align*}
    \mathcal H_{N,n}(s):=\frac{1}{n}\#\set{j\leq n~\big |~ s_j(X_{N,n})^{2/N} \leq s},\qquad \mathcal U(s):=\begin{cases}0,&\quad s<0\\
    s,&\quad 0\leq s\leq 1\\
    1,&\quad s>1\end{cases}
\end{align*}
of the squared singular values of $X_{N,n}$ and the uniform distribution on $[0,1]$. Let us define the event
\[S_{n,m}:=\set{\forall q\in \set{m+1,\ldots, n}~ \abs{\mathcal H_{N,n}\lr{q/n}-\mathcal U\lr{q/n}}\leq \eps}.\]
On this event, since $\mathcal H_{N,n}$ and $\mathcal U$ are both monotone we have for $t\leq (m+1)/n$ that
\begin{align*}
    \mathcal H_{N,n}(t) &\leq \mathcal H_{N,n}\lr{(m+1)/n}\leq \eps + \mathcal U\lr{(m+1)/n}=\eps + (m+1)/n \leq 3\eps.
\end{align*}
Similarly if $t>1$
\begin{align*}
1-\mathcal H_{N,n}(t)\leq 1-\mathcal H_{N,n}\lr{1} \leq  1-\mathcal U\lr{1}+\eps = \eps
\end{align*}
Using the same idea we may write for any $t\in [(m+1)/n,1]$
\begin{align*}
    \mathcal H_{N,n}(t)- \mathcal U(t) &\leq \mathcal H_{N,n}((j+1)/n) - \mathcal U((j+1)/n) + \mathcal U(t) - \mathcal U((j+1)/n)\\
    &\leq \eps  + 1/n\\
    &\leq 2\eps,
\end{align*}
where $m+1\leq j\leq n$ is the unique integer for which
\[\frac{j}{n}\leq t < \frac{j+1}{n}.\]
Hence, 
\[\mathbb P\lr{\sup_{t\in \R}\abs{\mathcal H_{N,n}(t)- \mathcal U(t)}> 3\eps}\leq \mathbb P\lr{S_{n,m}^c}\]
and we must therefore bound $\mathbb P\lr{S_{n,m}^c}$ from above. We will do this by rewriting all events involving the singular values $s_i$ in terms of the Lyapunov exponents $\lambda_j.$ Moving forward, let us agree that any event that involves $\lambda_{-s}$ or $\lambda_{n+s}$ for $ s>0$ is by definition empty. Since 
\[
s_j(X_{N,n})^{2/N}\leq \frac{q}{n}\quad \Leftrightarrow\quad \lambda_j\leq \frac{1}{2}\log\lr{\frac{q}{n}},
\]
we find
\begin{align*}
    \left| {\cal{H}}_{N,n} \left(q/n\right)  - {\mathcal U}\left( q/n \right)   \right| &= \left|  \frac{1}{n} \#\left\{ j\leq n : \lambda_{j} \leq \frac{1}{2} \log\left(q/n\right) \right\} -   q/n   \right|\\
    &=\left|  \frac{1}{n} \#\left\{ j\leq n : \lambda_{j} > \frac{1}{2} \log\left( q/n\right) \right\} -   (n-q)/n \right|  . 
\end{align*}
For any positive integer $m+1\leq q\leq n$, define
\begin{equation}
\label{tria-lem-1}
p:=p(n,m,q,N)=\mathbb P \left( \left| \frac{1}{n} \#\left\{ j\leq n : \lambda_{j} > \frac{1}{2} \log\left(q/n\right) \right\} - (n-q)/n \right| \geq \frac{m}{ n} \right) .
\end{equation}
We have
\[\mathbb P\lr{S_{n,m}^c} \leq \sum_{q=m+1}^n p(n,m,q,N),\]
and the proof of Theorem \ref{T:global} therefore reduces to estimating the probabilities in this sum. To do this, we fix $n,m,q,N$ and observe that since $\lambda_j$ are decreasing, the event whose probability we've denoted by $p(n,m,q,N)$ is equal to 
$$ \left\{ \lambda_{n-q+m} > \frac{1}{2}  \log\left(q/n\right)  \right\}
\ \cup \   \left\{ \lambda_{n-q-m+1} \leq \frac{1}{2}  \log\left( q/n\right) \right\},$$
where we remind the reader that the second event is empty if $q\geq n-m+1.$ Again using the monotonicity of $\lambda_j$, this implies
$$  \left\{  \frac{1}{n}\sum_{j=n-q+1}^{n-q+m} \lambda_{j}> \frac{m}{2n}  \log\left( q/n\right)  \right\}  
\ \cup \   \left\{  \frac{1}{n} \sum_{j=n-q-m+1}^{n-q} \lambda_{j} \leq \frac{m}{2n}  \log\left(q/n\right) \right\}.$$
So, the probability $p(n,m,q,N)$ we seek to bound is itself bounded above by 
$$p_{1} + p_{2} := \mathbb P \left(    \frac{1}{n}\sum_{j=n-q+1}^{n-q+m} \lambda_{j}> \frac{m}{2n}  \log\left( q/n\right)    \right) + 
\mathbb P \left(   \frac{1}{n} \sum_{j=n-q-m+1}^{n-q} \lambda_{j} \leq \frac{m}{2n}  \log\left(q/n\right) \right) . $$
\noindent To estimate $p_{1}$ note that by Lemma \ref{L:digamma}, 
\[\frac{m}{2n}\log\lr{q/n} - \frac{1}{n}\sum_{j=n-q+1}^{n-q+m} \mu_{n,j}\geq \frac{(m-1)^2}{4nq}.\]
Hence, we obtain
\[p_1\leq \mathbb P\lr{\abs{\frac{1}{n}\sum_{j=n-q+1}^{n-q+m}\left\{\lambda_j- \mu_{n,j} \right\}}\geq \frac{(m-1)^2}{4nq}}.\]
We will bound the right hand side by using Theorem \ref{T:sums}. To do this, we must ensure that for $c_2$ sufficiently large, our assumption $N>c_2/\eps^2$ implies that for the constant $c_1$ in Theorem \ref{T:sums}, we have
\begin{equation}\label{E:apply-thm}
    \frac{(m-1)^2}{4nq}\geq c_1 \frac{n-q+m}{nN}\log\lr{\frac{en}{n-q+m}}.
\end{equation}
To check this, note that since $x\log(e/x)\leq 1$ for $x\in [0,1]$ this estimate holds as soon as 
\begin{equation}\label{E:inq1}
    N\geq c_1 \frac{4nq}{(m-1)^2}.
\end{equation}
Recall that by construction, we have 
\[q\leq n,\qquad (m-1)^2 \geq\frac{1}{2}m^2= \frac{1}{2}\eps^2 n^2.\]
Hence, \eqref{E:inq1} is satisfied once
\[N\geq 8c_1\eps^{-2},\]
as claimed. Thus, we may apply Theorem \ref{T:sums} to conclude that
\[p_1(n,m,q,N)\leq c_3\exp\lr{c_4\min\set{nNs, n^2Nsg_{n,k}(s)}},\qquad s=s(n,m,q)=\frac{(m-1)^2}{4nq}.\]
Since
\[\inf_{q=m+1,\ldots,n}s(n,m,q) \geq \frac{\eps^2}{8},\]
we find that
\[\sum_{q=m+1}^n p_1(m,n,q,N) \leq c_3\exp\lr{-c_4\min\set{nN\eps^2, n^2N\eps^2 g_{n,k}(\eps^2)} + \log(n)}\]
for some universal constants $c_3,c_4>0$. Further, note that by assumption, 
\[
nN\eps^2 > c_2 n > \log(n)
\]
as soon as $c_2>1.$ Hence, at the cost of replacing $c_4$ by a slightly larger constant $c_4'$, we find that 
\[\sum_{q=m+1}^n p_1(m,n,q,N) \leq c_3\exp\lr{-c_4'\min\set{nN\eps^2, n^2N\eps^2 g_{n,k}(\eps^2)}}.\]
Essentially the identical argument (but this time the upper bound from Lemma \ref{L:digamma}) implies that this same upper bound holds for $p_2$ as well, completing the proof of Theorem \ref{T:global}.\hfill $\square$

\section{Asymptotic Normality: Proof for Theorem \ref{T:normal}}\label{S:normal-outline}
Theorem \ref{T:normal} concerns
\[\Lambda_k = \lr{\lambda_1,\ldots, \lambda_k}=\lr{\lambda_1(X_{N,n}),\ldots, \lambda_k(X_{N,n})},\]
the vector of the first $k$ Lyapunov exponents of $X_{N,n}$. Our aim is to prove that there exist universal constants $C,C'>0$ so that once $N\geq Cn\log(n)$ we have
\begin{equation}\label{E:normal-goal}
    d\lr{\Lambda_k, \mathcal N\lr{\mu_{n,\leq k}, \Sigma_{n,k,N}}}\leq C'\lr{\frac{k^{7/2}n\log^2(n)\log(N/n)}{N}}^{1/2},\qquad \Sigma_{n,k,N}:=\frac{1}{N}\mathrm{Diag}(\sigma_{n,\leq k}^{2})
\end{equation}
where $\mu_{n,\leq k},\sigma_{n,\leq k}^{2}$ are the vectors of means and variances of $\lr{\frac{1}{2}\log\lr{\frac{1}{n}\chi_{n-m+1}^2},\, m=1,\ldots,k}$ (see \eqref{E:mu-sigma-def}) and $d$ is the distance function defined in \eqref{E:dist-def}. To prove \eqref{E:normal-goal}, we introduce
\[S_k = \lr{\lambda_1,\lambda_1+\lambda_2,\ldots, \lambda_1+\cdots+\lambda_k}^*\]
and note that
\begin{equation}\label{E:T-def}
S_k= T\Lambda_k,    
\end{equation}
where $T$ is a lower triangular matrix all of whose lower-triangular entries are equal to $1$. The explain the strategy for proving Theorem \ref{T:normal}, let us fix $\Theta=\theta_1\wedge\cdots \wedge \theta_k$, where $\set{\theta_j}$ form an orthonormal $k$-frame in $\R^n.$ For $1\leq m \leq k,$ we will abbreviate
\[\Theta_{\leq m}= \theta_1\wedge\cdots \wedge \theta_m.\]
The idea of the proof is to compare $S_k,\Lambda_k$ to their ``pointwise'' analogs
\[\widehat{S}_k: = \frac{1}{N}\lr{\log \norm{X_{N,n}(\Theta_{\leq 1})}, \ldots, \log \norm{X_{N,n}(\Theta_{\leq k})}}^*\]
and
\begin{equation}\label{E:T-def-0} \widehat{\Lambda}_k := T^{-1}\widehat{S}_k,
     \end{equation}
where $\Theta = \lr{\theta_1,\ldots,\theta_k}$ is any fixed collection of $k$ orthonormal vectors in $\R^n.$ Specifically, by Proposition \ref{Dist-lem} and the affine invariance \eqref{E:affine-inv} of $d$, we find that there exists $c_0>0$ so that for all $\delta>0$
\begin{align}
\notag    d\lr{\Lambda_k, \mathcal N\lr{\mu_{n,k}, \Sigma_{n,k}}} & = d\lr{S_k, \mathcal N\lr{T\mu_{n,k}, T\Sigma_{n,k,N} T^*}} \\
\notag    &\leq 3d\lr{\widehat{S}_k,\mathcal N\lr{T\mu_{n,k}, T\Sigma_{n,k,N} T^*}} +c_0\delta \norm{\lr{T\Sigma_{n,k} T^*}^{-1}}_{HS}^{1/2}\\
\notag &+ 2 \mathbb P\lr{\norm{S_k - \widehat{S}_k}>\delta}\\
\notag    &= 3d\lr{\widehat{\Lambda}_k,\mathcal N\lr{\mu_{n,k}, \Sigma_{n,k,N} }} +c_0\delta \norm{\lr{T\Sigma_{n,k,N} T^*}^{-1}}_{HS}^{1/2}\\
\label{E:KS-terms}&+ 2 \mathbb P\lr{\norm{S_k - \widehat{S}_k}>\delta}.
\end{align}
The remainder of the proof consists of bounding each of these three terms and then optimizing over $\delta$. To start, let us check that the first term in \eqref{E:KS-terms} is small:
\begin{lemma}\label{L:pointwise-guassian}
In distribution, 
\begin{equation}\label{E:pointwise-gaussian}
    \widehat{\Lambda}_k = \frac{1}{N}\sum_{i=1}^N \lr{Y_{i,1},\ldots, Y_{i,k}},
\end{equation}
where $\set{Y_{i,j},~1\leq i \leq N,\,\, 1\leq j\leq k}$ are independent with
\[Y_{i,j}\sim \frac{1}{2} \log \lr{\frac{1}{n}\chi_{n-j+1}^2}.\]
Consequently, by the multivariate central limit theorem, there exists $C>0$ so that
\begin{equation}\label{E:first-term}
    d\lr{\widehat{\Lambda}_k,\mathcal N\lr{\mu_{n,\leq k}, \Sigma_{n,k,N} }}\leq \frac{C k^{7/4}}{N^{1/2}}
\end{equation}
where $  \Sigma_{n,k,N} = \frac{1}{N} {\mathrm{ Diag}(\sigma_{n,\leq k }^{2} )}$, $ \sigma_{n,j}^{2} :=\Var\left[\frac{1}{2}\log\lr{\frac{1}{n}\chi_{n-j+1}^2}\right]$.
\end{lemma}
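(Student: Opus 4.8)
The plan is to first establish the distributional identity \eqref{E:pointwise-gaussian} using the machinery already set up, and then invoke Theorem \ref{MultiCLT-theo} to get the quantitative bound \eqref{E:first-term}. For the identity: by the generalized Gram identity and the projection formula (Lemma \ref{lem-Grass-2}), $\log\norm{X_{N,n}(\Theta_{\leq m})}$ is a telescoping sum of terms $\log\norm{P_{\leq i-1}A_i\cdots A_1\theta_i/\norm{\cdots}}$, and the argument of Lemma \ref{L:pointwise-iid} applies verbatim at each level $m$: the key point is that the decomposition $\log\norm{X_{N,n}(\Theta_{\leq m})} = \sum_{i=1}^N\sum_{j=1}^m Y_{i,j}$ holds \emph{jointly} over $m=1,\ldots,k$ with the \emph{same} family of random variables $Y_{i,j}$, because at step $i$ the vectors $A_i\theta_1,\ldots,A_i\theta_k$ (suitably Gram--Schmidt'd inside the flag) produce, by Lemma \ref{L:polar}, independent normalized-chi-squared factors $Y_{i,j}\sim\frac12\log(\frac1n\chi^2_{n-j+1})$ that are shared across all partial wedge products. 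Applying $T^{-1}$ to $\widehat S_k$ then differences consecutive partial sums, so $(\widehat\Lambda_k)_j = \frac1N\sum_{i=1}^N Y_{i,j}$, which is exactly \eqref{E:pointwise-gaussian}.

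For the CLT bound: set $X_i := \frac1N(Y_{i,1}-\mu_{n,1},\ldots,Y_{i,k}-\mu_{n,k})\in\R^k$, so that $\widehat\Lambda_k - \mu_{n,\leq k} = \sum_{i=1}^N X_i$ is a sum of $N$ i.i.d.\ centered vectors with covariance $C = \mathrm{cov}(\widehat\Lambda_k) = \frac1N\mathrm{Diag}(\sigma^2_{n,\leq k}) = \Sigma_{n,k,N}$. Then Theorem \ref{MultiCLT-theo} gives $d(\widehat\Lambda_k,\mathcal N(\mu_{n,\leq k},\Sigma_{n,k,N}))\leq ck^{1/4}\beta$ with $\beta = \sum_{i=1}^N\E\norm{C^{-1/2}X_i}_2^3$. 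Since the $X_i$ are identically distributed, $\beta = N\,\E\norm{C^{-1/2}X_1}_2^3$, and $C^{-1/2}X_1$ has coordinates $(Y_{1,j}-\mu_{n,j})/(\sqrt{N}\,\sigma_{n,j})$, i.e.\ a normalized (mean $0$, variance $1$) version of $\frac12\log(\frac1n\chi^2_{n-j+1})$ divided by $\sqrt N$. Thus $\norm{C^{-1/2}X_1}_2^2 = \frac1N\sum_{j=1}^k Z_j^2$ where $Z_j = (Y_{1,j}-\mu_{n,j})/\sigma_{n,j}$, and by the moment estimates of Lemma \ref{L:tij-est} (after normalizing: $\sigma_{n,j}\asymp M_j^{-1/2}$ from \eqref{E:sigma-est}, so $\norm{Z_j}_{L^p}\lesssim \sqrt p$ uniformly in $j$) one gets $\E Z_j^4 = O(1)$ uniformly. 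Hence $\E\norm{C^{-1/2}X_1}_2^3 \leq \lr{\E\norm{C^{-1/2}X_1}_2^4}^{3/4} = N^{-3/2}\lr{\E(\sum_j Z_j^2)^2}^{3/4}\lesssim N^{-3/2}(k^2)^{3/4} = N^{-3/2}k^{3/2}$, giving $\beta \lesssim N\cdot N^{-3/2}k^{3/2} = k^{3/2}N^{-1/2}$ and therefore $d\lesssim k^{1/4}\cdot k^{3/2}N^{-1/2} = k^{7/4}N^{-1/2}$, which is \eqref{E:first-term}.

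The main obstacle is the careful bookkeeping in the distributional identity: one must verify that the Gram--Schmidt orthogonalization of $A_i(\text{flag})$ at each multiplication step produces factors that are \emph{simultaneously} the correct normalized chi-squared variables for every partial wedge $\Theta_{\leq m}$, $m=1,\ldots,k$ — that is, that the single sum-of-i.i.d.\ structure of Lemma \ref{L:pointwise-iid} upgrades to a joint statement across all $m$ with one coherent family $\{Y_{i,j}\}$. This follows because the orthonormal flag produced from $(A_i\theta_1,\ldots,A_i\theta_k)$ is, by Lemma \ref{L:haar-flags} and orthogonal invariance of the $A_i$, distributed as the flag from i.i.d.\ Gaussians $(\xi_1,\ldots,\xi_k)$, whose successive projection norms $\norm{P_{\leq j-1}\xi_j}$ are jointly independent with the stated $\chi_{n-j+1}$ law (Lemma \ref{L:polar}); the partial wedge $\Theta_{\leq m}$ only sees the first $m$ of these, so the telescoping is consistent. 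Everything else — the CLT application and the moment bounds via Lemma \ref{L:tij-est} — is routine.
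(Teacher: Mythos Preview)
Your proof of the distributional identity \eqref{E:pointwise-gaussian} follows the same path as the paper: decompose $\widehat S_k$ as a sum of $N$ i.i.d.\ vectors via the iterative structure of $X_{N,n}$ and orthogonal invariance, then apply $T^{-1}$ and use Lemma \ref{L:polar} to see that the components $Y_{i,j}$ are jointly independent with the claimed $\chi^2$-log law.

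Your argument for the CLT bound \eqref{E:first-term} is correct but genuinely different from the paper's. The paper observes that $\log\chi_m^2$ has a log-concave density, so the standardized vector $D=(\sigma_{n,j}^{-1}(Y_{1,j}-\mu_{n,j}))_{j\leq k}$ is an isotropic log-concave random vector in $\R^k$; it then invokes the reverse H\"older inequality for such vectors (from \cite{paouris2006concentration}) to get $(\E\norm{D}_2^3)^{1/3}\leq C(\E\norm{D}_2^2)^{1/2}=C\sqrt k$ directly. You instead bound $\E\norm{D}_2^3\leq(\E\norm{D}_2^4)^{3/4}$ and control the fourth moment coordinatewise via Lemma \ref{L:tij-est}: since $\sigma_{n,j}\asymp M_j^{-1/2}$, that lemma gives $\norm{Z_j}_4\lesssim\max\{2,\,4/\sqrt{M_j}\}=O(1)$ uniformly in $j$, whence $\E(\sum_j Z_j^2)^2=O(k^2)$ by independence. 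Both routes land on $\beta\lesssim k^{3/2}N^{-1/2}$. Your approach is more self-contained (it reuses the paper's own Lemma \ref{L:tij-est} rather than importing an external concentration result for log-concave vectors); the paper's route is slicker but relies on a nontrivial black box. One small caveat: your parenthetical claim $\norm{Z_j}_{L^p}\lesssim\sqrt p$ uniformly in $j$ is only valid for $p\lesssim M_j$ (for larger $p$ the sub-exponential tail dominates), but since you only use $p=4$ this does not affect the argument.
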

\begin{proof}
Fix integers $N,n\geq 1$ and $1\leq k \leq n$ and recall that $X_{N,n}=A_N\cdots A_1$ with $A_i$ iid $n\x n$ Gaussian matrices. Note that for each $1\leq m\leq k$, we have
\begin{equation}\label{E:vector-log}
    \log\norm{X_{N,n}(\Theta_{\leq m})} = \sum_{i=1}^N \log \norm{A_i(\Theta_{\leq m}^{(i)})},
\end{equation}
where
\[
\Theta_{\leq m}^{(1)}= \Theta_{\leq m},\qquad \Theta_{\leq m}^{(i+1)} = \frac{A_i\lr{\Theta_{\leq m}^{(i)}}}{\norm{A_i\lr{\Theta_{\leq m}^{(i)}}}}.
\]
Repeatedly applying Lemma \ref{L:haar-flags}, we therefore conclude that in distribution
\[\widehat{S}_k=\frac{1}{N}\sum_{i=1}^N \lr{\log\norm{A_i(\Theta_{\leq 1})},\ldots, \log \norm{A_i(\Theta_{\leq k})}}^*\]
is equal to a sum of iid random vectors. Thus, using the definition \eqref{E:T-def-0} of $\widehat{\Lambda}_k$, we find that in distribution
\[\widehat{\Lambda}_k=\frac{1}{N}\sum_{i=1}^N\widehat{\Lambda}_{k,i},\qquad \widehat{\Lambda}_{k,i}:= T^{-1}\lr{\log\norm{A_i(\Theta_{\leq 1})},\ldots, \log \norm{A_i(\Theta_{\leq k})}}^*,\]
where we recall that $T$ is a lower triangular matrix with all lower triangular entries equal to $1.$ Namely,
\[T = \lr{\begin{array}{ccccc}
    1 & 0 & 0&\cdots & 0  \\
    1 &  1 & 0& \cdots & 0  \\
    \vdots & \cdots & \ddots &\ddots &\vdots\\
    1 & \cdots & 1 & 1& 0  \\
    1 & \cdots & 1 & 1& 1   \\\end{array}},\qquad T^{-1}= \lr{\begin{array}{ccccc}
    1 & 0 & 0&\cdots & 0  \\
    -1 &  1 & 0& \cdots & 0  \\
    \vdots & \ddots & \ddots &\ddots &\vdots\\
    0 & \cdots & -1 & 1& 0   \\
    0 & \cdots & 0 & -1& 1   \\
\end{array}}.\]
Note that $\set{\widehat{\Lambda}_{k,i},\, m=1,\ldots, k}$ are independent collection for different $i$. Next, the $m^{th}$ component of $\widehat{\Lambda}_{k,i}$ is
\begin{align}
\label{E:components}
\lr{\widehat{\Lambda}_{k,i}}_m=\log\norm{A_i(\Theta_{\leq m})} -\log\norm{A_i(\Theta_{\leq m-1})} = \log \norm{\frac{A_i(\Theta_{\leq m-1})}{\norm{A_i(\Theta_{\leq m-1})}}\wedge A\theta_m}
\end{align}
Since $\set{\theta_i}$ are orthonormal, the collection $\set{A\theta_i}$ are iid Gaussians. In particular, we see that $A\theta_m$ is independent of $\set{A(\Theta_{\leq j}), 1\leq j\leq m-1}.$ Also, by Lemma \ref{L:polar}, the following collections of random variables are independent:
\[\set{\norm{A(\Theta_{\leq 1})},\ldots, \norm{A(\Theta_{\leq m-1})}},\qquad \set{\frac{A(\Theta_{\leq 1})}{\norm{A(\Theta_{\leq 1})}}, \ldots,\frac{A(\Theta_{\leq  m-1})}{\norm{A(\Theta_{\leq m-1})}}}.\]
The left hand side of relation \eqref{E:components} shows that the $1,\ldots,m-1^{st}$ components of $\Lambda_{k,i}$ depend only $\set{\norm{A(\Theta_{\leq j})},\, j=1,\ldots, m-1}$, whereas the right hand side of \eqref{E:components} shows that the $m^{th}$ component of $\Lambda_{k,i}$ depends only on $A(\Theta_{\leq m-1})/\norm{A(\Theta_{\leq m-1})}$ and on $A\theta_m$. Therefore, the $m^{th}$ component of $\widehat{\Lambda}_{k,i}$ is independent of all the previous components. Proceeding in this way for $m=k,k-1,\ldots,1$, we find that the components of $\widehat{\Lambda}_{k,i}$ are independent. Finally, let us denote by $\Pi_{\leq m-1}$ the orthogonal projection onto the orthogonal complement of the span of $\set{\theta_1,\ldots,\theta_{m-1}}.$ We have by Lemma \ref{lem-Grass-2} that in distribution
\[\lr{\widehat{\Lambda}_{k,i}}_m = \log \norm{ \Pi_{\leq m-1}(A\theta_m)}.\]
Note that $A\theta_m$ is independent of $\Pi_{\leq m-1}$ since the latter depends only on $A\theta_1,\ldots,A\theta_{m-1}$. Hence, we have the following equality in distribution:
\[\lr{\widehat{\Lambda}_{k,i}}_m = \frac{1}{2}\log\lr{ \frac{1}{n}\chi_{n-m+1}^2}.\]
This completes the proof of \eqref{E:pointwise-gaussian}. To conclude \eqref{E:first-term}, we apply the multivariate CLT (Theorem \ref{MultiCLT-theo}) to 
\[\widehat{\Lambda}_k-\E{\widehat{\Lambda}_k}=\sum_{i=1}^N \frac{1}{N}\lr{\widehat{\Lambda}_{k,i} -\mu_{n,\leq k}}.\]
Since  the covariance matrix of $ (Y_{i,1}, \cdots , Y_{i,k} ) $ is ${\mathrm{ Diag}(\sigma_{n,\leq k }^{2} )}$ by independence we have that $C:= {\rm Cov} ( \widehat{\Lambda}_{K}) := \frac{1}{N} {\rm Diag} ( \sigma_{n,1}^{2}, \cdots , \sigma_{n,k}^{2})$.  Recall that $ \beta_{i} := \mathbb E \| C^{-\frac{1}{2}} (\overline{Y}_{i,1}, \cdots , \overline{Y}_{i,k})\|_{2}^{3} $. It is not difficult to check that $ \log\chi_{m}^{2} $ is a $\log$-concave random variable (i.e. its density is a log-concave function). Moreover, since $ \sigma_{n,j}^{-1} \overline{Y}_{i,j}$ have mean zero and variance $1$, $ D:=(  \sigma_{i,1}^{-1} \overline{Y}_{i,1}, \cdots, \sigma_{i,k}^{-1} \overline{Y}_{i,k})$ is a log-concave random vector in $\mathbb R^{k}$ with covariance matrix equals to the identity. Therefore $\mathbb E \|D\|_{2}^{2} = k $. It is known that the Euclidean norm of such vectors satisfies a reverse H\"older inequality with a universal constant,  and in particular (see e.g. \cite{paouris2006concentration} or [\cite{artstein2015asymptotic} Theorem 10.4.6] for a stronger result) that 
$$ \left( \mathbb E \| D\|_{2}^{3} \right)^{\frac{1}{3}} \leq  C \left( \mathbb E \| D\|_{2}^{2} \right)^{\frac{1}{2}}= C\sqrt{k} , $$
where $C>0$ is an absolute constant. So,
$$ \beta_{i} =\frac{1}{N^{\frac{3}{2}}}  \mathbb E \| D \|_{2}^{3} \leq \frac{C^{3} k^{\frac{3}{2}}  }{N^{\frac{3}{2}}} \ 1\leq i \leq N.$$
Therefore, 
$$ \beta:= \sum_{j=1}^{N} \beta_{j} \leq \frac{C^{3} k^{\frac{3}{2}}  }{N^{\frac{1}{2}}}  $$
and we conclude that there exists an absolute constant $c>0$ so that
\[d(\widehat{\Lambda}_k, \mathcal N(\mu_{n,\leq k}, 
\Sigma_{n,k}))\leq c k^{7/4}N^{-1/2}. \]
\end{proof}
\noindent Having bounded the first term in \eqref{E:KS-terms}, we write 
\[\lr{T\Sigma_{n,k,N} T^*}^{-1} = \lr{T^*}^{-1}\Sigma_{n,k,N}^{-1}T^{-1}\]
and bound the second term using that the matrix $\Sigma$ is diagonal and that $T^{-1}$ a bi-diagonal:
\begin{lemma}\label{L:HS-est}
There exists $C>0$ so that
\begin{equation}\label{E:second-term}
    \norm{\lr{T\Sigma_{n,k,N} T^*}^{-1}}_{HS}^{1/2}\leq Ck^{1/4}(nN)^{1/2}.
\end{equation}
\end{lemma}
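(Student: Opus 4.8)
The plan is to exploit the explicit bidiagonal structure of $T^{-1}$ together with the diagonal form of $\Sigma_{n,k,N}$. First I would write
\[\lr{T\Sigma_{n,k,N} T^*}^{-1} = \lr{T^{-1}}^{*}\,\Sigma_{n,k,N}^{-1}\,T^{-1},\]
and apply submultiplicativity of the Hilbert--Schmidt norm under multiplication by bounded operators, namely $\norm{AMB}_{HS}\leq \norm{A}_{\mathrm{op}}\norm{M}_{HS}\norm{B}_{\mathrm{op}}$. This reduces the claim to two elementary estimates: an operator-norm bound on $T^{-1}$, and a Hilbert--Schmidt bound on the diagonal matrix $\Sigma_{n,k,N}^{-1}$.

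For the first, recall that $T^{-1}$ has $1$'s on the diagonal, $-1$'s on the first subdiagonal, and $0$'s elsewhere, so every row and every column of $T^{-1}$ has $\ell_1$-norm at most $2$; the Schur test then gives $\norm{T^{-1}}_{\mathrm{op}}\leq 2$, and likewise $\norm{\lr{T^{-1}}^{*}}_{\mathrm{op}}\leq 2$. For the second, since $\Sigma_{n,k,N}=\tfrac1N\,\mathrm{Diag}(\sigma_{n,\leq k}^{2})$ we have $\Sigma_{n,k,N}^{-1}=N\,\mathrm{Diag}\lr{\sigma_{n,1}^{-2},\ldots,\sigma_{n,k}^{-2}}$. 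Using \eqref{E:sigma-est} together with the integral comparison $\psi'(x)=\sum_{m\geq 0}(x+m)^{-2}\geq \int_0^\infty (x+t)^{-2}\,dt = 1/x$, valid for all $x>0$, we obtain the uniform lower bound $\sigma_{n,j}^{2}=\psi'\lr{\tfrac{n-j+1}{2}}\geq \tfrac{2}{n-j+1}\geq \tfrac{2}{n}$ for every $j\leq k\leq n$, hence $\sigma_{n,j}^{-2}\leq n/2$. Therefore
\[\norm{\Sigma_{n,k,N}^{-1}}_{HS}^{2} = N^{2}\sum_{j=1}^{k}\sigma_{n,j}^{-4}\leq N^{2}\,k\,\lr{\frac{n}{2}}^{2} = \frac{k\,n^{2}N^{2}}{4}.\]

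Combining the three bounds gives $\norm{\lr{T\Sigma_{n,k,N}T^*}^{-1}}_{HS}\leq 2\cdot\tfrac12 k^{1/2}nN\cdot 2 = 2k^{1/2}nN$, and taking square roots yields \eqref{E:second-term} with $C=\sqrt{2}$. There is no genuine obstacle here; the only point requiring a little care is securing a clean lower bound on the variances $\sigma_{n,j}^{2}$ that is uniform over $j\leq k$, which is exactly what the integral comparison for the trigamma function $\psi'$ provides.
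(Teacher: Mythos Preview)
Your proof is correct. Both the paper and you start from the same factorization $(T\Sigma_{n,k,N}T^*)^{-1}=(T^{-1})^*\Sigma_{n,k,N}^{-1}T^{-1}$ and rely on the explicit bidiagonal form of $T^{-1}$ together with the bound $\sigma_{n,j}^{-2}\leq Cn$ coming from \eqref{E:sigma-est}. The difference is purely organizational: the paper multiplies the three factors out explicitly, obtaining a tridiagonal matrix whose entries are each of size $O(N\sigma_{n,j}^{-2})$, and then sums squares directly to get $\norm{\cdot}_{HS}\leq 2N(\sum_j\sigma_{n,j}^{-4})^{1/2}$. You instead invoke the abstract submultiplicativity $\norm{AMB}_{HS}\leq\norm{A}_{\mathrm{op}}\norm{M}_{HS}\norm{B}_{\mathrm{op}}$ and control $\norm{T^{-1}}_{\mathrm{op}}$ by the Schur test. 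Your route is marginally cleaner in that it avoids writing down the product matrix and would generalize immediately to any $T$ with bounded inverse, while the paper's explicit computation makes the tridiagonal structure visible. The resulting bounds are identical up to the absolute constant.
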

\begin{proof}
We have
\[T = \lr{\begin{array}{ccccc}
    1 & 0 & 0&\cdots & 0  \\
    1 &  1 & 0& \cdots & 0  \\
    \vdots & \cdots & \ddots &\ddots &\vdots\\
    1 & \cdots & 1 & 1& 0  \\
    1 & \cdots & 1 & 1& 1   \\\end{array}},\qquad T^{-1}= \lr{\begin{array}{ccccc}
    1 & 0 & 0&\cdots & 0  \\
    -1 &  1 & 0& \cdots & 0  \\
    \vdots & \ddots & \ddots &\ddots &\vdots\\
    0 & \cdots & -1 & 1& 0   \\
    0 & \cdots & 0 & -1& 1   \\
\end{array}}.\]
Thus, recalling that
\[\Sigma_{n,k,N}= \frac{1}{N}\mathrm{Diag}\lr{\sigma_{n,\leq k}}=\frac{1}{N}\lr{\sigma_{n,1}^{2},\ldots, \sigma_{n,k}^{2}},\]
we find
\[(T^*)^{-1}\Sigma_{n,k,N}^{-1}T^{-1} =  N\lr{\begin{array}{ccccc}
    \sigma_{n,1}^{-2}+\sigma_{n,2}^{-2} & - \sigma_{n,2}^{-2}& 0&\cdots & 0  \\
    -\sigma_{n,2}^{-2} &   \sigma_{n,2}^{-2}+\sigma_{n,3}^{-2} & -\sigma_{n,3}^{-2}& \cdots & 0  \\
    \vdots & \cdots & \ddots &\ddots &\vdots\\
    0 & \cdots & -\sigma_{n,k-1}^{-2} &\sigma_{n,k-1}^{-2}+\sigma_{n,k}^{-2} & -\sigma_{n,k}^{-2}  \\
    0 & \cdots & 0 &-\sigma_{n,k}^{-2}  & \sigma_{n,k}^{-2}   \\\end{array}}\]
Hence, using \eqref{E:sigma-est}, we find that for some $C>0$
\[\norm{(T^*)^{-1}\Sigma_{n,k,N}^{-1}T^{-1}}_{HS} \leq  2N \lr{\sum_{j=1}^k \sigma_{n,k,j}^{-4}}^{1/2}\leq CN\lr{\sum_{j=1}^k (n-k+1)^2}^{1/2}\leq CNnk^{1/2},\]
and Lemma \ref{L:HS-est} follows.
\end{proof}
\noindent Thus far, combining the previous two Lemma with \eqref{E:KS-terms}, we've shown that 
\begin{align}
\label{E:first-two}d\lr{\Lambda_k, \mathcal N\lr{\mu_{n,k}, \Sigma_{n,k}}} &\leq \frac{Ck^{7/4}}{N^{1/2}} +c_0\delta k^{1/4}(nN)^{1/2}+ 2 \mathbb P\lr{\norm{S_k - \widehat{S}_k}>\delta}.    
\end{align}
So it remains to estimate
\[
  \mathbb P\lr{\norm{S_k - \widehat{S}_k}_2\geq\delta} 
\]
and optimize over $\delta$. To do this, write $S_{k,j}, \widehat{S}_{k,j}$ for the $j^{th}$ components of $S_k,\widehat{S}_k$. By   \eqref{conc-by-sb}, there exists $C>0$ so that for $ 1\leq j \leq k \leq n$, 
\begin{equation*}
\mathbb P \left( | S_{k,j}  -\widehat{S}_{k,j}|\geq  s   \right) \leq 2e^{ - sN/4 } ,\qquad s\geq C\frac{ j}{N} \log\lr{ \frac{en}{j}}.
\end{equation*}
For any collection positive real numbers $\delta_j>C\frac{j}{N}\log\lr{\frac{en}{j}}$ we therefore have,
\[
  \mathbb P\lr{\norm{S_k - \widehat{S}_k}_2\geq\lr{\sum_{j=1}^k \delta_j^2}^{1/2}}\leq \sum_{j=1}^k \mathbb P \left( | S_{k,j}  -\widehat{S}_{k,j}|\geq  \delta_j \right) \leq 2\sum_{j=1}^ke^{-\delta_j N/4 }.
\]
Setting
\[
\delta_j:=\frac{Cj}{N}\log\lr{\frac{en}{j}} \log\lr{\frac{N}{n}},
\]
for a sufficiently large constant $C$ we find
\[
  \mathbb P\lr{\abs{S_{k,j} - \widehat{S}_{k,j}}\geq\delta_j}\leq 2e^{-Cj\log(en /j )\log(N/n)}\leq 2(n/N)^{j/2}.
\]
Hence, as soon as $N> n$, we have
\[
\mathbb P\lr{\norm{S_k - \widehat{S}_k}_2\geq\delta}\leq C\lr{\frac{n}{N}}^{1/2}
\]
where
\[
\delta :=\lr{\sum_{j=1}^{k}\delta_j^2}^{1/2} \leq \frac{C k^{3/2}\log(n)\log(N/n)}{N}.
\]
In conjunction with \eqref{E:first-two} yields
\begin{align*}
d\lr{\Lambda_k, \mathcal N\lr{\mu_{n,k}, \Sigma_{n,k}}} &\leq \frac{Ck^{7/4}}{N^{1/2}} +\lr{\frac{Ck^{7/2}n \log^2(n)\log^2(N/n)}{N}}^{1/2} + C\lr{\frac{n}{N}}^{1/2}\\  
&\leq \lr{\frac{4Ck^{7/2}n \log^2(n)\log^2(N/n)}{N}}^{1/2},
\end{align*}
as claimed.

\bibliographystyle{alpha}
  \bibliography{bibliography}

\begin{thebibliography}{HNWTW20}

\bibitem[AAGM15]{artstein2015asymptotic}
Shiri Artstein-Avidan, Apostolos Giannopoulos, and Vitali~D Milman.
\newblock {\em Asymptotic geometric analysis, Part I}, volume 202.
\newblock American Mathematical Soc., 2015.

\bibitem[AB12]{akemann2012universal}
Gernot Akemann and Zdzislaw Burda.
\newblock Universal microscopic correlation functions for products of
  independent ginibre matrices.
\newblock {\em Journal of Physics A: Mathematical and Theoretical},
  45(46):465201, 2012.

\bibitem[ABK14]{akemann2014universalb}
Gernot Akemann, Zdzislaw Burda, and Mario Kieburg.
\newblock Universal distribution of lyapunov exponents for products of ginibre
  matrices.
\newblock {\em Journal of Physics A: Mathematical and Theoretical},
  47(39):395202, 2014.

\bibitem[ABK19]{akemann2019integrable}
Gernot Akemann, Zdzislaw Burda, and Mario Kieburg.
\newblock From integrable to chaotic systems: Universal local statistics of
  lyapunov exponents.
\newblock {\em EPL (Europhysics Letters)}, 126(4):40001, 2019.

\bibitem[ABKN14]{akemann2014universal}
Gernot Akemann, Zdzislaw Burda, Mario Kieburg, and Taro Nagao.
\newblock Universal microscopic correlation functions for products of truncated
  unitary matrices.
\newblock {\em Journal of Physics A: Mathematical and Theoretical},
  47(25):255202, 2014.

\bibitem[AG97]{arous1997large}
G~Ben Arous and Alice Guionnet.
\newblock Large deviations for wigner's law and voiculescu's non-commutative
  entropy.
\newblock {\em Probability theory and related fields}, 108(4):517--542, 1997.

\bibitem[Ahn19]{ahn2019fluctuations}
Andrew Ahn.
\newblock Fluctuations of beta-jacobi product processes.
\newblock {\em arXiv preprint arXiv:1910.00743}, 2019.

\bibitem[AI15]{akemann2015recent}
Gernot Akemann and Jesper~R Ipsen.
\newblock Recent exact and asymptotic results for products of independent
  random matrices.
\newblock {\em arXiv preprint arXiv:1502.01667}, 2015.

\bibitem[ALPTJ10]{adamczak2010quantitative}
Rados{\l}aw Adamczak, Alexander Litvak, Alain Pajor, and Nicole
  Tomczak-Jaegermann.
\newblock Quantitative estimates of the convergence of the empirical covariance
  matrix in log-concave ensembles.
\newblock {\em Journal of the American Mathematical Society}, 23(2):535--561,
  2010.

\bibitem[AS64]{abramowitz+stegun}
Milton Abramowitz and Irene~A. Stegun.
\newblock {\em Handbook of Mathematical Functions with Formulas, Graphs, and
  Mathematical Tables}.
\newblock Dover, ninth dover printing, tenth gpo printing edition, 1964.

\bibitem[Bal93]{ball1993reverse}
Keith Ball.
\newblock The reverse isoperimetric problem for gaussian measure.
\newblock {\em Discrete \& Computational Geometry}, 10(4):411--420, 1993.

\bibitem[BBCC11]{banica2011free}
Teodor Banica, Serban~Teodor Belinschi, Mireille Capitaine, and Benoit Collins.
\newblock Free bessel laws.
\newblock {\em Canadian Journal of Mathematics}, 63(1):3--37, 2011.

\bibitem[Ben03]{bentkus2003dependence}
Vidmantas Bentkus.
\newblock On the dependence of the berry--esseen bound on dimension.
\newblock {\em Journal of Statistical Planning and Inference}, 113(2):385--402,
  2003.

\bibitem[Ben05]{bentkus2005lyapunov}
Vidmantas Bentkus.
\newblock A lyapunov-type bound in rd.
\newblock {\em Theory of Probability \& Its Applications}, 49(2):311--323,
  2005.

\bibitem[BLR85]{bougerol1985concentration}
Philippe Bougerol, Peter Lacroix, Jean as well as~Huber, and Murray~(eds)
  Rosenblatt.
\newblock {\em The concentration of measure phenomenon}.
\newblock Progress in Probability, 1985.

\bibitem[Car82]{carmona1982exponential}
Rene Carmona.
\newblock Exponential localization in one dimensional disordered systems.
\newblock {\em Duke Mathematical Journal}, 49(1):191--213, 1982.

\bibitem[CN84]{cohen1984stability}
Joel~E Cohen and Charles~M Newman.
\newblock The stability of large random matrices and their products.
\newblock {\em The Annals of Probability}, pages 283--310, 1984.

\bibitem[Dam11]{damanik2011short}
David Damanik.
\newblock A short course on one-dimensional random schrodinger operators.
\newblock {\em arXiv preprint arXiv:1107.1094}, 2011.

\bibitem[Dys62]{dyson1962statistical}
Freeman~J Dyson.
\newblock Statistical theory of the energy levels of complex systems. i.
\newblock {\em Journal of Mathematical Physics}, 3(1):140--156, 1962.

\bibitem[Fil19]{filip2019notes}
Simion Filip.
\newblock Notes on the multiplicative ergodic theorem.
\newblock {\em Ergodic Theory and Dynamical Systems}, 39(5):1153--1189, 2019.

\bibitem[FK60]{furstenberg1960products}
Harry Furstenberg and Harry Kesten.
\newblock Products of random matrices.
\newblock {\em The Annals of Mathematical Statistics}, 31(2):457--469, 1960.

\bibitem[FL16]{forrester2016singular}
Peter~J Forrester and Dang-Zheng Liu.
\newblock Singular values for products of complex ginibre matrices with a
  source: hard edge limit and phase transition.
\newblock {\em Communications in Mathematical Physics}, 344(1):333--368, 2016.

\bibitem[For13]{forrester2013lyapunov}
Peter~J Forrester.
\newblock Lyapunov exponents for products of complex gaussian random matrices.
\newblock {\em Journal of Statistical Physics}, 151(5):796--808, 2013.

\bibitem[For14]{forrester2014eigenvalue}
Peter~J Forrester.
\newblock Eigenvalue statistics for product complex wishart matrices.
\newblock {\em Journal of Physics A: Mathematical and Theoretical},
  47(34):345202, 2014.

\bibitem[GJ18]{gotze2018rate}
Friedrich G{\"o}tze and Jonas Jalowy.
\newblock Rate of convergence to the circular law via smoothing inequalities
  for log-potentials.
\newblock {\em arXiv preprint arXiv:1807.00489}, 2018.

\bibitem[GS18]{gorin2018gaussian}
Vadim Gorin and Yi~Sun.
\newblock Gaussian fluctuations for products of random matrices.
\newblock {\em arXiv preprint arXiv:1812.06532}, 2018.

\bibitem[GT10]{gotze2010asymptotic}
Friedrich G{\"o}tze and Alexander Tikhomirov.
\newblock On the asymptotic spectrum of products of independent random
  matrices.
\newblock {\em arXiv preprint arXiv:1012.2710}, 2010.

\bibitem[HN19]{hanin2019products}
Boris Hanin and Mihai Nica.
\newblock Products of many large random matrices and gradients in deep neural
  networks.
\newblock {\em Communications in Mathematical Physics}, pages 1--36, 2019.

\bibitem[HNWTW20]{huang2020matrix}
De~Huang, Jonathan Niles-Weed, Joel~A Tropp, and Rachel Ward.
\newblock Matrix concentration for products.
\newblock {\em arXiv preprint arXiv:2003.05437}, 2020.

\bibitem[HW20]{henriksen2020concentration}
Amelia Henriksen and Rachel Ward.
\newblock Concentration inequalities for random matrix products.
\newblock {\em Linear Algebra and its Applications}, 594:81--94, 2020.

\bibitem[IN92]{isopi1992triangle}
Marco Isopi and Charles~M Newman.
\newblock The triangle law for lyapunov exponents of large random matrices.
\newblock {\em Communications in mathematical physics}, 143(3):591--598, 1992.

\bibitem[Jal19]{jalowy2019rate}
Jonas Jalowy.
\newblock Rate of convergence for products of independent non-hermitian random
  matrices.
\newblock {\em arXiv preprint arXiv:1912.09300}, 2019.

\bibitem[Joh04]{johansson2004determinantal}
Kurt Johansson.
\newblock Determinantal processes with number variance saturation.
\newblock {\em Communications in mathematical physics}, 252(1-3):111--148,
  2004.

\bibitem[Kal02]{kallenberg2002foundations}
Olav Kallenberg.
\newblock {\em Foundations of Modern Probability}.
\newblock Springer Science \& Business Media, 2002.

\bibitem[Kar08]{kargin2008lyapunov}
Vladislav Kargin.
\newblock Lyapunov exponents of free operators.
\newblock {\em Journal of Functional Analysis}, 255(8):1874--1888, 2008.

\bibitem[KK16]{kieburg2016exact}
Mario Kieburg and Holger K{\"o}sters.
\newblock Exact relation between singular value and eigenvalue statistics.
\newblock {\em Random Matrices: Theory and Applications}, 5(04):1650015, 2016.

\bibitem[KMS20]{kathuria2020concentration}
Tarun Kathuria, Satyaki Mukherjee, and Nikhil Srivastava.
\newblock On concentration inequalities for random matrix products.
\newblock {\em arXiv preprint arXiv:2003.06319}, 2020.

\bibitem[Lat97]{latala1997estimation}
Rafa{\l} Lata{\l}a.
\newblock Estimation of moments of sums of independent real random variables.
\newblock {\em The Annals of Probability}, 25(3):1502--1513, 1997.

\bibitem[LP82]{le1982theoremes}
{\'E}mile Le~Page.
\newblock Th{\'e}oremes limites pour les produits de matrices al{\'e}atoires.
\newblock In {\em Probability measures on groups}, pages 258--303. Springer,
  1982.

\bibitem[LW19]{liu2019phase}
Dang-Zheng Liu and Yanhui Wang.
\newblock Phase transitions for infinite products of large non-hermitian random
  matrices.
\newblock {\em arXiv preprint arXiv:1912.11910}, 2019.

\bibitem[LWW18]{liu2018lyapunov}
Dang-Zheng Liu, Dong Wang, and Yanhui Wang.
\newblock Lyapunov exponent, universality and phase transition for products of
  random matrices.
\newblock {\em arXiv preprint arXiv:1810.00433}, 2018.

\bibitem[LWZ16]{liu2016bulk}
Dang-Zheng Liu, Dong Wang, and Lun Zhang.
\newblock Bulk and soft-edge universality for singular values of products of
  ginibre random matrices.
\newblock In {\em Annales de l'Institut Henri Poincar{\'e}, Probabilit{\'e}s et
  Statistiques}, volume~52, pages 1734--1762. Institut Henri Poincar{\'e},
  2016.

\bibitem[Naz03]{nazarov2003maximal}
Fedor Nazarov.
\newblock On the maximal perimeter of a convex set in $\mathbb{R}^n$ with
  respect to a gaussian measure.
\newblock In {\em Geometric aspects of functional analysis}, pages 169--187.
  Springer, 2003.

\bibitem[Nem17]{nemish2017local}
Yuriy Nemish.
\newblock Local law for the product of independent non-hermitian random
  matrices with independent entries.
\newblock {\em Electronic Journal of Probability}, 22, 2017.

\bibitem[New86]{newman1986distribution}
Charles~M Newman.
\newblock The distribution of lyapunov exponents: Exact results for random
  matrices.
\newblock {\em Communications in mathematical physics}, 103(1):121--126, 1986.

\bibitem[OS11]{o2011products}
Sean O'Rourke and Alexander Soshnikov.
\newblock Products of independent non-hermitian random matrices.
\newblock {\em Electronic Journal of Probability}, 16:2219--2245, 2011.

\bibitem[Ose68]{oseledets1968multiplicative}
Valery~Iustinovich Oseledets.
\newblock A multiplicative ergodic theorem. characteristic ljapunov, exponents
  of dynamical systems.
\newblock {\em Trudy Moskovskogo Matematicheskogo Obshchestva}, 19:179--210,
  1968.

\bibitem[Pao06]{paouris2006concentration}
Grigoris Paouris.
\newblock Concentration of mass on convex bodies.
\newblock {\em Geometric \& Functional Analysis GAFA}, 16(5):1021--1049, 2006.

\bibitem[PP13]{paouris2013small}
Grigoris Paouris and Peter Pivovarov.
\newblock Small-ball probabilities for the volume of random convex sets.
\newblock {\em Discrete \& Computational Geometry}, 49(3):601--646, 2013.

\bibitem[Red16]{reddy2016lyapunov}
Nanda~Kishore Reddy.
\newblock Lyapunov exponents and eigenvalues of products of random matrices.
\newblock {\em arXiv preprint arXiv:1606.07704}, 2016.

\bibitem[Red19]{reddy2019equality}
Nanda~Kishore Reddy.
\newblock Equality of lyapunov and stability exponents for products of
  isotropic random matrices.
\newblock {\em International Mathematics Research Notices}, 2019(2):606--624,
  2019.

\bibitem[Rud14]{rudelson2014lecture}
Mark Rudelson.
\newblock Lecture notes on non-aymptotic random matrix theory.
\newblock In {\em Modern Aspects of Random Matrix Theory -- AMS Proceedings of
  Symposia in Applied Mathematics}, pages 83--121, 2014.

\bibitem[Rud17]{rudelson2017delocalization}
Mark Rudelson.
\newblock Delocalization of eigenvectors of random matrices. lecture notes.
\newblock {\em arXiv preprint arXiv:1707.08461}, 2017.

\bibitem[RV08]{rudelson2008littlewood}
Mark Rudelson and Roman Vershynin.
\newblock The littlewood--offord problem and invertibility of random matrices.
\newblock {\em Advances in Mathematics}, 218(2):600--633, 2008.

\bibitem[RV09]{rudelson2009smallest}
Mark Rudelson and Roman Vershynin.
\newblock Smallest singular value of a random rectangular matrix.
\newblock {\em Communications on Pure and Applied Mathematics: A Journal Issued
  by the Courant Institute of Mathematical Sciences}, 62(12):1707--1739, 2009.

\bibitem[Spi70]{spivak1970comprehensive}
Michael~D Spivak.
\newblock {\em A comprehensive introduction to differential geometry}.
\newblock Publish or perish, 1970.

\bibitem[Tao10]{tao2010epsilon}
Terence Tao.
\newblock {\em An Epsilon of Room, II: pages from year three of a mathematical
  blog}.
\newblock American Mathematical Society Providence, RI, 2010.

\bibitem[Tik20]{tikhomirov2020singularity}
Konstantin Tikhomirov.
\newblock Singularity of random bernoulli matrices.
\newblock {\em Annals of Mathematics}, 191(2):593--634, 2020.

\bibitem[Tro15]{tropp2015introduction}
Joel Tropp.
\newblock An introduction to matrix concentration inequalities.
\newblock {\em Foundations and Trends{\textregistered} in Machine Learning},
  8(1-2):1--230, 2015.

\bibitem[Tuc10]{tucci2010limits}
Gabriel~H Tucci.
\newblock Limits laws for geometric means of free random variables.
\newblock {\em Indiana University mathematics journal}, pages 1--13, 2010.

\bibitem[Ver12]{vershynin12}
Roman Vershynin.
\newblock Introduction to the non-asymptotic analysis of random matrices.
\newblock pages 210--268. Cambridge University Press, 2012.

\end{thebibliography}

\end{document}